\documentclass[12pt,reqno]{article}

\usepackage{amsmath,amssymb,amsthm, mathrsfs}
\usepackage{latexsym}
\usepackage{graphics}
\usepackage{hyperref}
\usepackage{graphicx,psfrag}
\usepackage[bf, small]{titlesec}
\usepackage{authblk} %%% new line between authors

\usepackage{lineno}
%\linenumbers

%%%%%%%%%%%%%%%%%%%%%%%%%%%%%%%%%%%%%%%%%%%%%%%%%%%%%%%%%%%%%%%%%%%%%%%%%%%%%%%%%%%

\setlength{\topmargin}{-0.5cm}
\setlength{\oddsidemargin}{0.2cm}
\setlength{\evensidemargin}{0.2cm}
\setlength{\textheight}{21cm}
\setlength{\textwidth}{16cm}
\setlength{\footskip}{2cm}
\setlength{\columnsep}{1cm}

%%%%%%%%%%%%%%%%%%%%%%%%%%%%%%%%%%%%%%%%%%%%%%%%%%%%%%%%%%%%%%%%%%%%%%%%%%%%%%%%%%%

\theoremstyle{plain}
\newtheorem{Thm}{Theorem}[section]
\newtheorem{Lem}[Thm]{Lemma}
\newtheorem{Prop}[Thm]{Proposition}
\newtheorem{Cor}[Thm]{Corollary}

\theoremstyle{definition}
\newtheorem{Def}[Thm]{Definition}
\newtheorem{Rem}[Thm]{Remark}
\newtheorem{Ex}[Thm]{Example}

\usepackage{standalone}
\usepackage{pgfpages,tikz,pgfkeys,pgfplots}
\usetikzlibrary{arrows,positioning,matrix,fit,backgrounds,shapes,shapes.geometric, intersections}

\usetikzlibrary{shapes.callouts,decorations.pathmorphing} %%% 팝업용
\usetikzlibrary{shadows} % shadow
\usepackage{calc} % calculate angles to evenly space vertices in circular arrangements.
\usetikzlibrary{decorations.markings} %%put arrowheads in the middle of directed edges
\tikzstyle{vertex}=[circle, draw, inner sep=0pt, minimum size=6pt] % style

 %the calculations of angles for evenly spacing vertices in circular arrangements.
%\tikzset{node distance=2cm, auto} %%% Hesse Diagram
\usetikzlibrary{arrows,matrix} %%% Hesse Diagram

%%%%%%%%%%%%%%%%%%%%%%%%%%%%%%%%%%%%%%%%%%%%%%%%%%%%%%%%%%%%%%%%%%%%%%%%%%%%%%%%%%%

% 꼬부랑 글씨 단축명령
\newcommand{\RR}{\mathbb{R}} %%%%% mathbb
 %%%%%
 %%%%%
 %%%%%
 %%%%%

 %%%%% mathcal
\newcommand{\PPP}{\mathcal{P}} %%%%%
 %%%%%
 %%%%%
 %%%%%
\newcommand{\CCC}{\mathcal{C}} %%%%%
 %%%%%
 %%%%%
\newcommand{\III}{\mathcal{I}} %%%%%
 %%%%%
 %%%%%
 %%%%%
 %%%%%
 %%%%%
 %%%%%

\addtocounter{MaxMatrixCols}{20}

\title{Using $p$-row graphs to study $p$-competition graphs}
\author[1]{\small Soogang Eoh}
\author[1]{\small Taehee Hong}
\author[1]{\small Suh-Ryung Kim}
\author[2]{\small Seung Chul Lee}
\affil[1]{\footnotesize Department of Mathematics Education, Seoul National University, Seoul 08826, Republic of Korea}
%\affil[2]{\footnotesize Applied Algebra and Optimization Research Center, Department of Mathematics, Sungkyunkwan University, Suwon 16419, Republic of Korea}
\affil[2]{\footnotesize Seoul Science High School, Hyehwa-ro 63, Jongno-gu, Seoul 03066, Republic of Korea}
\affil[ ]{\footnotesize\textit{mathfish@snu.ac.kr, ds3mbc@snu.ac.kr, srkim@snu.ac.kr, lschul@sen.go.kr}}

\begin{document}

\maketitle

% Intro(Abstract, Keywords,)
\begin{abstract}
For a positive integer $p$, the $p$-competition graph of a digraph $D$ is a graph which has the same vertex set as $D$ and an edge between distinct vertices $x$ and $y$ if and only if $x$ and $y$ have at least $p$ common out-neighbors in $D$. A graph is said to be a $p$-competition graph if it is the $p$-competition graph of a digraph. Given a graph $G$, we call the set of positive integers $p$ such that $G$ is a $p$-competition the competition-realizer of a graph $G$. In this paper, we introduce the notion of $p$-row graph of a matrix which generalizes the existing notion of row graph.
We call the graph obtained from a graph $G$ by identifying each pair of adjacent vertices which share the same closed neighborhood the condensation of $G$.
Using the notions of $p$-row graph and condensation of a graph, we study competition-realizers for various graphs to extend results given by Kim {\it et al.}~[$p$-competition graphs,
{\it Linear Algebra Appl.} {\bf 217} (1995) 167--178]. Especially, we find all the elements in the competition-realizer for each caterpillar.
\end{abstract}
\noindent{\bf Keywords:} $p$-competition graph; $p$-edge clique cover;  competition-realizer; $p$-row graph; condensation of a graph; caterpillar\\
\noindent{\bf 2010 Mathematics Subject Classification:} 05C05, 05C38, 05C62, 05C75

\section{Introduction}
Given a digraph $D = (V,A)$, the \emph{competition graph} of $D$ is a simple graph having the same vertex set as $D$ and having an edge $uv$
if for some vertex $x \in V$, the arcs $(u,x)$ and $(v,x)$ are in
$D$. The notion of competition graph is due to Cohen~\cite{cohen1968interval}
and has arisen from ecology.  Competition graphs also have
applications in coding, radio transmission, and modelling of
complex economic systems (see \cite{raychaudhuri1985generalized} and \cite{roberts1999competition}
for a summary of these applications and \cite{greenberg1981graph} for a sample
paper on the modelling application).  Since Cohen introduced the
notion of competition graph, various variations have been defined
and studied by many authors (see the survey articles by Kim~\cite{kim1993competition} and Lundgren~\cite{lundgren1989food}). For recent work on this topic, see~\cite{factor20111, kamibeppu2012sufficient, kuhl2013transversals, li2012competition, zhang20161}.

Kim {\it et al.}~\cite{kim1995p} introduced $p$-competition graphs as a variant of competition graph. For a positive integer $p$, the \emph{$p$-competition graph} $C_p(D)$ of a digraph $D=(V,A)$ is defined to have the vertex set $V$ with an edge between two distinct vertices $x$ and $y$ if and only if, for some distinct vertices $a_1, \ldots, a_p$ in $V$, the pairs $(x, a_1)$, $(y, a_1)$, $(x, a_2)$, $(y, a_2)$, $\ldots$, $(x, a_p)$, $(y, a_p)$ are arcs in $D$. Note that $C_1(D)$ is the ordinary competition graph, which implies that the notion of $p$-competition graph generalizes that of competition graph. A graph $G$ is called a \emph{$p$-competition graph} if there exists a digraph $D$ such that $G=C_p(D)$. By definition, it is obvious that if a nonempty graph $G$ is a $p$-competition graph, then $p \le |V(G)|$.

Competition graphs are closely related to
edge clique covers and the edge clique cover numbers of graphs.
A {\it clique} of a graph $G$ is a subset of
the vertex set of $G$ that induces a complete graph.
We regard an empty set as a clique of $G$ for convenience.
An {\it edge clique cover}
of a graph $G$ is a family of cliques of $G$
such that the end vertices of each edge of $G$
are contained in a clique in the family.
The minimum size of an edge clique cover of $G$ is called
the {\it edge clique cover number}
of $G$, and is denoted by $\theta_e(G)$.
Dutton and Brigham~\cite{dutton1983characterization} characterized a competition graph in terms of its edge clique cover number.

\begin{Thm}[\!\!\cite{dutton1983characterization}]\label{thm:ECC}
A graph $G$ with $n$ vertices is a competition graph if and only if $\theta_e(G) \le n$.
\end{Thm}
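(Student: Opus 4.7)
The plan is to establish both directions via an explicit correspondence between edge clique covers of $G$ and digraphs whose $1$-competition graph equals $G$.

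For the forward implication, I would start by assuming $G = C_1(D)$ for some digraph $D=(V(G),A)$ and then observe that for every $v \in V(G)$, the in-neighborhood $N^-_D(v) := \{u : (u,v) \in A\}$ is a clique of $G$, since any two distinct vertices in $N^-_D(v)$ share $v$ as a common out-neighbor and hence are adjacent in $C_1(D)$. Next I would argue that the family $\FFF := \{N^-_D(v) : v \in V(G)\}$ covers every edge of $G$: for any edge $xy \in E(G)$, the definition of competition graph supplies a vertex $a$ with $x, y \in N^-_D(a)$. Thus $\FFF$ is an edge clique cover of $G$ of size at most $|V(G)|=n$, giving $\theta_e(G) \le n$.

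For the converse, suppose $\theta_e(G) \le n$ and pick an edge clique cover $\{K_1, \dots, K_m\}$ with $m \le n$. I would fix an enumeration $V(G)=\{v_1,\dots,v_n\}$ and construct a digraph $D$ on $V(G)$ by adding, for every $i \in \{1,\dots,m\}$ and every $u \in K_i$, the arc $(u,v_i)$. It then remains to check that $C_1(D) = G$: two distinct vertices $x,y$ share a common out-neighbor in $D$ iff $\{x,y\} \subseteq K_i$ for some $i$, and this in turn holds iff $xy \in E(G)$, since each $K_i$ is a clique of $G$ and the $K_i$'s cover every edge.

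The only genuine obstacle is the accounting in the backward direction: the hypothesis $m \le n$ is precisely what allows us to allocate a distinct ``sink'' vertex $v_i$ to each clique $K_i$, guaranteeing that different cliques produce independent sets of common out-neighbors and introduce no spurious edges. Once this enumeration is set up, the remaining verifications reduce to unwinding the definitions of $C_1(D)$ and of an edge clique cover.
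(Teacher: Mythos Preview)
Your proof is correct and is essentially the standard Dutton--Brigham argument. Note, however, that the paper does not supply its own proof of this theorem: it is quoted as a known result from \cite{dutton1983characterization}, so there is no in-paper proof to compare against. Your construction---in-neighborhoods as cliques for the forward direction, and assigning each clique $K_i$ a distinct sink $v_i$ for the converse---is exactly the classical correspondence underlying the result, and the paper implicitly relies on the same idea when it later passes between $p$-edge clique covers and $(0,1)$-matrices in the discussion preceding Theorem~\ref{thm:rowgraph}.
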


A $p$-competition graph $G$ can be characterized in terms of the ``$p$-edge clique cover number" of $G$. For a positive integer $p$, a {\em $p$-edge clique cover} ({\em $p$-ECC} for short) of a graph $G$ is defined to be a multifamily ${\mathcal F}=\{F_1,\ldots,F_r\}$ of subsets of the vertex set of $G$ satisfying the following:
\begin{itemize}
\item For any $J\in\binom{[r]}{p}$, the set $\bigcap_{j \in J} F_j$ is a clique of $G$;
\item The collection $\left\{\bigcap_{j \in J}F_j \mid J\in\binom{[r]}{p}\right\}$ covers all the edges of $G$,
\end{itemize}
where $\binom{[r]}{p}$ denotes the set of $p$-element subsets of the set $\{1,\ldots,r\}$ for a positive integer $r$.
The minimum size $r$ of a $p$-edge clique cover of $G$
is called the {\it $p$-edge clique cover number}  of $G$,
and is denoted by $\theta_e^p(G)$.
The following theorem characterizes $p$-competition graphs and so generalizes Theorem~\ref{thm:ECC}.

\begin{Thm}[\!\!\cite{kim1995p}]\label{thm:pECC}
A graph $G$ with $n$ vertices is a $p$-competition graph if and only if $\theta_e^p(G) \le n$.
\end{Thm}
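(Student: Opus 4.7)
The plan is to mirror the Dutton--Brigham argument for ordinary competition graphs (Theorem~\ref{thm:ECC}), using in-neighborhoods on one side and a prescribed digraph construction on the other, and to verify that the two defining conditions of a $p$-ECC correspond exactly to the two aspects of the relation $G=C_p(D)$.

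For the forward direction, suppose $G=C_p(D)$ for some digraph $D$ on the vertex set $V(G)=\{v_1,\dots,v_n\}$. For each $i\in[n]$ I would set $F_i=N_D^-(v_i)$, the in-neighborhood of $v_i$ in $D$, and consider the multifamily $\mathcal{F}=\{F_1,\dots,F_n\}$, which has at most $n$ members. To check the clique condition: for any $J\in\binom{[n]}{p}$, a vertex $u$ lies in $\bigcap_{j\in J}F_j$ precisely when $u$ is an in-neighbor of every $v_j$ with $j\in J$; hence any two such vertices share the $p$ distinct common out-neighbors $\{v_j:j\in J\}$ in $D$, making them adjacent in $C_p(D)=G$. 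To check the covering condition: if $xy\in E(G)=E(C_p(D))$, there exist distinct vertices $a_1,\dots,a_p$, say $a_k=v_{j_k}$, that are common out-neighbors of $x$ and $y$; letting $J=\{j_1,\dots,j_p\}$, both $x$ and $y$ lie in $\bigcap_{j\in J}F_j$. This shows $\theta_e^p(G)\le n$.

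For the reverse direction, suppose $\theta_e^p(G)\le n$, and fix a $p$-ECC $\mathcal{F}=\{F_1,\dots,F_r\}$ of $G$ with $r\le n$. Label the vertices of $G$ as $v_1,\dots,v_n$ and define the digraph $D$ on $V(G)$ by taking arc set $A(D)=\{(u,v_i)\mid 1\le i\le r,\ u\in F_i\}$ (the vertices $v_{r+1},\dots,v_n$, if any, receive no arcs). Then for any two distinct vertices $x,y$, the set of common out-neighbors of $x$ and $y$ in $D$ is $\{v_i: 1\le i\le r,\ x,y\in F_i\}$, and this set has size at least $p$ iff there exists $J\in\binom{[r]}{p}$ with $x,y\in\bigcap_{j\in J}F_j$. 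The clique condition on $\mathcal{F}$ guarantees that whenever such a $J$ exists, $xy\in E(G)$; the covering condition guarantees that every edge of $G$ arises from some such $J$. Thus $G=C_p(D)$, as required.

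The argument is almost entirely bookkeeping, and the only delicate point---which I would flag explicitly in the write-up---is the dual role played by the index set $[r]$: on the digraph side it enumerates potential target vertices, and on the $p$-ECC side it enumerates members of the multifamily, so the inequality $r\le n$ is precisely what allows the indices to be realized by distinct vertices of $G$. No further machinery beyond the definitions is needed, and because $\mathcal{F}$ is a multifamily, repeated sets $F_i=F_{i'}$ are handled automatically by the construction, since they simply correspond to distinct target vertices $v_i, v_{i'}$ with identical in-neighborhoods.
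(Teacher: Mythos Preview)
Your proof is correct and is the standard argument: in-neighborhoods give the $p$-ECC in one direction, and targeting the $i$th vertex with every member of $F_i$ gives the digraph in the other. Note, however, that this paper does not actually prove Theorem~\ref{thm:pECC}; it is quoted from \cite{kim1995p} and used as a black box, so there is no in-paper proof to compare against. Your write-up matches the natural generalization of the Dutton--Brigham argument and would serve as a self-contained proof if one were desired here.
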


In this paper, we introduce the notion of $p$-row graph of a matrix which generalizes the existing notion of row graph of a matrix. We also introduce the notion of the condensation of a graph that is obtained from a graph by identifying each pair of adjacent vertices which share the same closed neighborhood. Using these notions, we study competition-realizers for various graphs to extend results given by Kim {\it et al.}~[$p$-competition graphs,
{\it Linear Algebra Appl.} {\bf 217} (1995) 167--178]. Especially, we find all the elements in the competition-realizer for each caterpillar.

\section{$p$-row graphs and competition-realizers}
In this section, we introduce the notion of $p$-row graph of a matrix which generalize the notion of row graph of a matrix and the notion of competition-realizer for a graph. Then we study competition-realizers for various graphs in terms of $p$-row graph and the condensation of a graph. Particularly, we identify the graphs with $n$ vertices whose competition-realizers contain $n$ and $n-1$, respectively.
%we identify the $n$-competition graphs and the $(n-1)$-competition graphs among the graphs with $n$ vertices.

\begin{Def}
Given a positive integer $p$ and a $(0,1)$-matrix $A$, a graph $G$ is called the \emph{$p$-row graph} of $A$ if the vertices of $G$ are the rows of $A$,
and two vertices are adjacent in $G$ if and only if their corresponding rows have common nonzero entries in at least $p$ columns of $A$.
\end{Def}
If $p=1$, then $G$ is called the \emph{row graph} of $A$, which was introduced by Greenberg {\it et al.}~\cite{greenberg1981graph}.

Suppose that a graph $G$ is a $p$-competition graph with the vertex set $\{v_1,\ldots,v_{n}\}$.
Then there exists a $p$-ECC $\mathcal{F}=\{F_1,\ldots,F_m\}$ of $G$ for a nonnegative integer $m \le n$ by Theorem~\ref{thm:pECC}.
Now we define a square matrix $A=(a_{ij})$ of order $n$ by
\begin{equation}\label{eqn:matrix}
a_{ij}=
\begin{cases}
  1 & \mbox{if } v_i \in F_j; \\
  0 & \mbox{otherwise}.
\end{cases}
\end{equation}
By the definition of $p$-ECC, it is easy to see that $G$ is isomorphic to the $p$-row graph of $A$.
Conversely, suppose that a graph $G$ with $n$ vertices is isomorphic to the $p$-row graph of a square $(0,1)$-matrix $A$ of order $n$. Let
\[F_j = \{v_i \mid a_{ij} = 1\}.\]
and let $\mathcal{F}=\{F_1,\ldots,F_n\}$.
By the definition of $p$-row graph, a vertex $v_s$ and a vertex $v_t$ are adjacent if and only if the $s$th row and the $t$th row of $A$ have common nonzero entries in at least $p$ columns, which is equivalent to the statement that
the pair of vertices $v_s$ and $v_t$ is contained in the sets in $\mathcal{F}$ corresponding to those columns.
Then, by Theorem~\ref{thm:pECC}, $G$ is a $p$-competition graph.

Now we have shown the following statement:

\begin{Thm}\label{thm:rowgraph}
    A graph $G$ with $n$ vertices is a $p$-competition graph if and only if $G$ is isomorphic to the $p$-row graph of a square $(0,1)$-matrix of order $n$.
\end{Thm}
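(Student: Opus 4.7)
The plan is to reduce both directions to Theorem~\ref{thm:pECC} through the duality between families of subsets of a finite ground set and $(0,1)$-matrices given by the incidence matrix. Explicitly, a $p$-ECC $\mathcal{F}=\{F_1,\dots,F_m\}$ of $G$ corresponds to the $|V(G)|\times m$ $(0,1)$-matrix whose $(i,j)$-entry is $1$ exactly when $v_i\in F_j$; under this correspondence, two vertices share at least $p$ members of $\mathcal{F}$ if and only if their rows share nonzero entries in at least $p$ columns, which is precisely the definition of adjacency in the $p$-row graph.

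For the forward direction, I would start with a $p$-competition graph $G$ on $n$ vertices, invoke Theorem~\ref{thm:pECC} to obtain a $p$-ECC $\mathcal{F}=\{F_1,\dots,F_m\}$ with $m\le n$, build its $n\times m$ incidence matrix, and pad on the right with $n-m$ zero columns to obtain a square matrix $A$ of order $n$. Then I would check that the $p$-row graph of $A$ equals $G$: the covering axiom of $\mathcal{F}$ supplies one direction of the edge equivalence, and the clique axiom (applied to every $p$-subset $J$ of column indices) supplies the other.

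For the backward direction, suppose $G$ is isomorphic to the $p$-row graph of a square $(0,1)$-matrix $A$ of order $n$. I would define $F_j=\{v_i\mid a_{ij}=1\}$ and $\mathcal{F}=\{F_1,\dots,F_n\}$, and verify that $\mathcal{F}$ is a $p$-ECC by reading each axiom directly from the definition of $p$-row graph: for any $J\in\binom{[n]}{p}$, two vertices in $\bigcap_{j\in J}F_j$ share $1$'s in all $p$ columns indexed by $J$ and are hence adjacent in $G$; and every edge $v_sv_t$ arises from $p$ common nonzero columns, producing a $p$-subset $J$ with $\{v_s,v_t\}\subseteq\bigcap_{j\in J}F_j$. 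Hence $\theta_e^p(G)\le n$, and Theorem~\ref{thm:pECC} yields that $G$ is a $p$-competition graph.

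The only genuine obstacle is cosmetic: Theorem~\ref{thm:pECC} only guarantees $\theta_e^p(G)\le n$, not equality, whereas the theorem demands a square matrix of order exactly $n$. Padding with zero columns resolves this, since a zero column contributes nothing to any row's support and therefore does not alter the count of columns in which two rows share a $1$; the $p$-row graph is invariant under such padding. Beyond this bookkeeping, the argument amounts to a clean translation between the vocabulary of $p$-ECCs and that of $(0,1)$-matrices.
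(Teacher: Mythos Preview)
Your proposal is correct and follows essentially the same approach as the paper: both directions are reduced to Theorem~\ref{thm:pECC} via the incidence-matrix correspondence between $p$-ECCs and $(0,1)$-matrices, with zero-column padding to square the matrix. If anything, you are slightly more explicit than the paper about the padding step and about verifying both axioms of a $p$-ECC in the backward direction.
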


For simplicity's sake, we denote $J_{m,n}$ for the $(0,1)$-matrix of size $m$ by $n$ such that every entry is 1, $I_n$ for the identity matrix of order $n$, and $O_{m,n}$ for the zero matrix of size $m$ by $n$.

For a graph $G$ with $n$ vertices, we denote the set
\[
\{p \in [n]\mid G \mbox{ is a $p$-competition graph} \}
\]
by $\Upsilon(G)$ and call it the \emph{competition-realizer} for $G$.

We make the following simple but useful observations.
\begin{Prop}\label{prop:complete or empty}
Let $G$ be a graph with $n$ vertices.
If $G$ is empty or complete, then $\Upsilon(G)=[n]$.
\end{Prop}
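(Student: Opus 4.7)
The plan is to invoke Theorem~\ref{thm:rowgraph} and, for each $p \in [n]$, exhibit a square $(0,1)$-matrix of order $n$ whose $p$-row graph is $G$. Since $\Upsilon(G) \subseteq [n]$ holds by definition, it suffices to prove the reverse containment, which amounts to showing $p$-competitiveness for every $p \in [n]$ in each of the two cases.

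For the empty case, I would take $A = I_n$. Any two distinct rows of $I_n$ have disjoint supports, so they share a common nonzero entry in $0$ columns, which is strictly less than $p$ for every positive integer $p$. Hence the $p$-row graph of $I_n$ has $n$ vertices and no edges, which is exactly $G$; Theorem~\ref{thm:rowgraph} then gives $p \in \Upsilon(G)$.

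For the complete case, I would take $A = J_{n,n}$. Every pair of rows has a common $1$ in all $n$ columns, so for every $p \in [n]$, every pair of rows shares at least $p$ columns of common nonzero entries. Therefore the $p$-row graph of $J_{n,n}$ is $K_n$, which is $G$, and once again Theorem~\ref{thm:rowgraph} yields $p \in \Upsilon(G)$.

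There is no genuine obstacle here: the proposition is essentially a sanity check on the definitions, and the two matrices $I_n$ and $J_{n,n}$ handle the two extremes uniformly in $p$. The only thing to be careful about is to invoke Theorem~\ref{thm:rowgraph} with a matrix of order exactly $n$ (not larger), which both constructions satisfy.
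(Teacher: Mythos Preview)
Your proof is correct and follows essentially the same approach as the paper: both invoke Theorem~\ref{thm:rowgraph} with an explicit square matrix of order $n$, using $J_{n,n}$ for the complete case. The only cosmetic difference is that for the empty case the paper uses $O_{n,n}$ rather than your $I_n$, but either choice works for identical reasons.
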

\begin{proof}
If $G$ is empty, then $G$ is a $p$-row graph of $O_{n,n}$ and so, by Theorem~\ref{thm:rowgraph}, is a $p$-competition graph for any $p\in[n]$.
If $G$ is complete, then $G$ is a $p$-row graph of $J_{n,n}$ and so, by the same theorem, is a $p$-competition graph for any $p \in [n]$.
\end{proof}

\begin{Prop}\label{prop:prow}
Given a graph $G$ with $n$ vertices, suppose that $G$ is a $p$-row graph of a matrix of size $n$ by $m$ for positive integers $p$ and $m\leq n$.
Then $\Upsilon(G)\supset\{p+i\mid i\in[n-m]\cup\{0\}\}$.
\end{Prop}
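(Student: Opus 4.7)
The plan is to apply Theorem~\ref{thm:rowgraph}: to show that $p+i \in \Upsilon(G)$ it suffices to produce an $n\times n$ square $(0,1)$-matrix whose $(p+i)$-row graph is $G$. So for each $i$ with $0 \le i \le n-m$ I will construct such a matrix from the given $n\times m$ matrix $A$ whose $p$-row graph is $G$.

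The construction is simply to pad $A$: set
\[
A_i := \bigl[\,A \mid J_{n,i} \mid O_{n,\,n-m-i}\,\bigr],
\]
the horizontal concatenation of $A$ with $i$ all-ones columns followed by $n-m-i$ all-zero columns. The condition $0 \le i \le n-m$ ensures both appended blocks are well-defined, and the total column count is $m+i+(n-m-i)=n$, so $A_i$ is square of order $n$. This is the only place the hypotheses $m\le n$ and $i\le n-m$ actually enter the argument.

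The verification step is then a short bookkeeping check. For any two distinct rows indexed by $s,t$, the number of columns of $A_i$ in which both entries equal $1$ equals the corresponding count for $A$ plus exactly $i$: each of the $i$ all-ones columns contributes one to the pair's common nonzero entries, whereas each appended zero column contributes nothing. Consequently, rows $s$ and $t$ share nonzero entries in at least $p+i$ columns of $A_i$ if and only if they share nonzero entries in at least $p$ columns of $A$, i.e., if and only if $v_sv_t\in E(G)$. Hence the $(p+i)$-row graph of $A_i$ is precisely $G$, and Theorem~\ref{thm:rowgraph} delivers $p+i\in \Upsilon(G)$.

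There is no real obstacle; the whole argument rests on the elementary observation that adjoining an all-ones column raises every pairwise common-nonzero-column count by exactly one, so shifting both the threshold from $p$ to $p+i$ and the column set simultaneously preserves the adjacency structure. The only sanity-check worth mentioning is that $p+i$ genuinely lies in $[n]$, which follows from $p\le m$; if instead $p>m$, then no two rows of $A$ can agree on $p$ nonzero columns, so $G$ is empty and Proposition~\ref{prop:complete or empty} already yields $\Upsilon(G)=[n]$.
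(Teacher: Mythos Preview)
Your proof is correct and follows essentially the same approach as the paper: pad the given $n\times m$ matrix with $i$ all-one columns and $n-m-i$ all-zero columns, then invoke Theorem~\ref{thm:rowgraph}. Your write-up is simply more explicit about the column-count bookkeeping and adds a small (though unnecessary) sanity check about $p+i\le n$.
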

\begin{proof}
Let $M$ be an $n\times m$ matrix whose $p$-row graph is $G$.
Take $i\in[n-m]\cup\{0\}$.
Then we add $i$ all-one columns and $n-m-i$ all-zero columns to $M$ to obtain a square matrix of order $n$ whose $(p+i)$-row graph is $G$.
By Theorem~\ref{thm:rowgraph}, $G$ is a $(p+i)$-competition graph and so $p+i\in \Upsilon(G)$.
\end{proof}

\begin{Prop}\label{prop:add isolated}
Let $G$ be a graph and $G'$ be a graph obtained from $G$ by adding $k$ isolated vertices for a nonnegative integer $k$.
Then $\Upsilon(G')\supset \{p+i\mid i\in[k]\cup\{0\}\}$ for each $p\in\Upsilon(G)$.
\end{Prop}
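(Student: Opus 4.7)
The plan is to realize $G'$ as the $p$-row graph of a rectangular matrix of the appropriate shape and then invoke Proposition~\ref{prop:prow}. Fix $p \in \Upsilon(G)$ and set $n = |V(G)|$; since $\Upsilon(G) \subseteq [n]$, we may assume $n \geq 1$, and by Theorem~\ref{thm:rowgraph} there is a square $(0,1)$-matrix $A$ of order $n$ whose $p$-row graph is isomorphic to $G$. I would then form the $(n+k) \times n$ matrix $A'$ obtained by appending $k$ all-zero rows to the bottom of $A$.

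The next step is to verify that the $p$-row graph of $A'$ is $G'$. Each appended row is all-zero, so it shares zero common nonzero columns with every other row and therefore corresponds to an isolated vertex in the $p$-row graph of $A'$; meanwhile, the first $n$ rows retain precisely the intersection pattern they had in $A$, so they induce a copy of $G$. Hence the $p$-row graph of $A'$ consists of a copy of $G$ together with $k$ isolated vertices, which is exactly $G'$.

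Finally, applying Proposition~\ref{prop:prow} to $G'$ (which has $n+k$ vertices) with $m = n \leq n+k$ yields
\[
\Upsilon(G') \supset \{p + i \mid i \in [(n+k) - n] \cup \{0\}\} = \{p + i \mid i \in [k] \cup \{0\}\},
\]
which is the required containment. I anticipate no serious obstacle: the whole argument reduces to the observation that appending an all-zero row to a matrix corresponds to adding an isolated vertex to its $p$-row graph, after which Proposition~\ref{prop:prow} does the rest.
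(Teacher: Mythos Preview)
Your proof is correct and follows essentially the same idea as the paper: append $k$ all-zero rows to the matrix for $G$ to realize the isolated vertices, then pad with all-one and all-zero columns to reach a square matrix of order $n+k$ whose $(p+i)$-row graph is $G'$. The only difference is cosmetic---you invoke Proposition~\ref{prop:prow} to handle the column-padding step, whereas the paper writes out the block matrix $M_i$ explicitly for each $i$; your version is slightly more modular but the underlying construction is identical.
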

\begin{proof}
Let $|V(G)|=n$ and suppose $p\in\Upsilon(G)$.
By Theorem~\ref{thm:rowgraph}, $G$ is a $p$-row graph of a square $(0, 1)$-matrix $M$ of order $n$.
Fix $i \in [k] \cup \{0\}$.
We define a square $(0,1)$-matrix $M_i$ of order $n+k$ as follows:
\begin{center}
\begin{tikzpicture}[
    style1/.style={
        matrix of math nodes,
        every node/.append style = {
            text width=#1,
            align=center,
            minimum height=#1
            },
        nodes in empty cells,
        left delimiter = [,
        right delimiter = ],
        column 2/.style={
            nodes={text width=.3*#1}
        }
    }
    ]
    \matrix[style1 = 3ex] (M_i) { & &&& \\ & &&& \\ & &&& \\};
    \draw[dashed] (M_i-1-2.north) -- (M_i-2-2.center);
    \draw[dashed] (M_i-1-4.north) -- (M_i-2-4.center);
    \draw[dashed] (M_i-2-1.west) -- (M_i-2-5.east);

    \node at (M_i-1-1) {$M$};
    \node at (M_i-1-3) {$J_{n,i}$};
    \node at (M_i-1-5) {$O_{n,k-i}$};
    \node at (M_i-3-3) {$O_{k,n+k}$};

    \draw[decorate, decoration={mirror,raise=12pt}] (M_i-2-1.west) node[left=12pt] {$M_i=$};
    \draw[decorate, decoration={mirror,raise=12pt}] ([yshift=-5pt]M_i-2-5.east) node[right=9pt] {.};
\end{tikzpicture}
\end{center}
Obviously, the $(p+i)$-row graph of $M_i$ is $G$ together with $k$ isolated vertices.
By Theorem~\ref{thm:rowgraph}, $p+i\in \Upsilon(G')$.
\end{proof}

%We characterize the $n$-competition graphs and the $(n-1)$-competition graphs among the graphs with $n$ vertices, that is, we characterize the competition-realizers for which contains $n$.
For a $p$-row graph $G$ of a matrix $M$ and a vertex $u$ of $G$, we let
\[
\Lambda_M(u)=\{ i \mid \text{the  }i\text{th component of the row corresponding to } u \text{ in } M \text{ is } 1 \}.
\]

A vertex $v$ of a graph $G$ is called a \emph{simplicial vertex} if its neighbors form a clique in $G$.

\begin{Prop}\label{prop:lambda}
Let $G$ be a $p$-row graph of a matrix $M$.
Then, for a non-isolated non-simplicial vertex $u$, $|\Lambda_M(u)|\geq p+1$.
\end{Prop}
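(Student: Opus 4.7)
The plan is a short proof by contradiction. First, unpack the hypotheses: since $u$ is non-isolated, $u$ has a neighbor $v$ in $G$, and since $u$ is non-simplicial, there exist neighbors $v,w$ of $u$ with $vw\notin E(G)$. By the definition of a $p$-row graph, adjacency of $u$ and $v$ gives $|\Lambda_M(u)\cap \Lambda_M(v)|\geq p$, and similarly $|\Lambda_M(u)\cap \Lambda_M(w)|\geq p$; non-adjacency of $v$ and $w$ gives $|\Lambda_M(v)\cap \Lambda_M(w)|\leq p-1$.

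The first inequality already yields $|\Lambda_M(u)|\geq p$, so it suffices to rule out equality. I would assume for contradiction that $|\Lambda_M(u)|=p$. Then $|\Lambda_M(u)\cap \Lambda_M(v)|\geq p$ forces $\Lambda_M(u)\cap \Lambda_M(v)=\Lambda_M(u)$, i.e.\ $\Lambda_M(u)\subseteq \Lambda_M(v)$; by the same reasoning $\Lambda_M(u)\subseteq \Lambda_M(w)$. Hence
\[
\Lambda_M(u)\subseteq \Lambda_M(v)\cap \Lambda_M(w),
\]
which gives $|\Lambda_M(v)\cap \Lambda_M(w)|\geq p$, contradicting $vw\notin E(G)$. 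Therefore $|\Lambda_M(u)|\geq p+1$.

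There is no real obstacle here; the only place one has to be mildly careful is in picking two neighbors $v,w$ of $u$ that witness the non-simplicial hypothesis, so that the non-adjacency of $v$ and $w$ can be played against the two adjacencies with $u$.
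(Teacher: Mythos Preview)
Your proof is correct and follows essentially the same approach as the paper: choose two nonadjacent neighbors $v,w$ of $u$, use adjacency to get $|\Lambda_M(u)\cap\Lambda_M(v)|\ge p$ and $|\Lambda_M(u)\cap\Lambda_M(w)|\ge p$, and then derive from $|\Lambda_M(u)|\le p$ that $\Lambda_M(u)\subseteq\Lambda_M(v)\cap\Lambda_M(w)$, contradicting the nonadjacency of $v$ and $w$. The only cosmetic difference is that the paper assumes $|\Lambda_M(u)|\le p$ directly, whereas you first note $|\Lambda_M(u)|\ge p$ and then rule out equality.
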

\begin{proof}
By the condition on $u$, $u$ is adjacent to two nonadjacent vertices $v$ and $w$.
Then
$$p \leq |\Lambda_M(u)\cap\Lambda_M(v)| \quad\text{ and }\quad p \leq |\Lambda_M(u)\cap\Lambda_M(w)|.$$
Suppose that $|\Lambda_M(u)|\leq p$.
Then
$$|\Lambda_M(u)\cap\Lambda_M(v)|\leq p \quad\text{ and }\quad |\Lambda_M(u)\cap\Lambda_M(w)|\leq p.$$
Thus $|\Lambda_M(u)\cap\Lambda_M(v)|= |\Lambda_M(u)\cap\Lambda_M(w)|=p$ and so $\Lambda_M(u)\cap\Lambda_M(v)= \Lambda_M(u)\cap\Lambda_M(w)=\Lambda_M(u)$.
Hence $\Lambda_M(u) \subset \Lambda_M(v)\cap\Lambda_M(w)$ and so $|\Lambda_M(v)\cap\Lambda_M(w)|\geq p$, which is a contradiction.
\end{proof}

The following proposition characterizes a graph $G$ with $n$ vertices and $n\in\Upsilon(G)$.

We denote a set of $m$ isolated vertices by $\III_m$.
Technically, we let $\III_0=\emptyset$ and $K_0 = \emptyset$.

% Proposition: Complete
\begin{Prop} \label{prop:complete}
Let $G$ be a graph with $n$ vertices. Then $G$ is an $n$-competition graph if and only if $G \cong K_m\cup \III_{n-m}$ for some integer $m\in \{0,\ldots,n\}$.
\end{Prop}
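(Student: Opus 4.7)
The plan is to invoke Theorem~\ref{thm:rowgraph} and reduce everything to a one-line observation about $n\times n$ matrices.

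For the forward direction, suppose $G$ is an $n$-competition graph with vertex set $\{v_1,\ldots,v_n\}$. By Theorem~\ref{thm:rowgraph}, $G$ is isomorphic to the $n$-row graph of some $n\times n$ $(0,1)$-matrix $M$. The key observation is that $M$ has exactly $n$ columns, so two rows of $M$ can share nonzero entries in at least $n$ columns only if they share nonzero entries in \emph{all} $n$ columns; this forces both rows to be the all-ones row. Consequently, $v_i$ is non-isolated in $G$ iff the $i$th row of $M$ is the all-ones row, and any two such non-isolated vertices are mutually adjacent. Letting $m$ be the number of all-ones rows, the non-isolated vertices induce $K_m$ and the remaining $n-m$ vertices are isolated, so $G\cong K_m\cup\III_{n-m}$.

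For the reverse direction, let $G\cong K_m\cup\III_{n-m}$. If $m=0$ then $G=\III_n$ is empty, and Proposition~\ref{prop:complete or empty} gives $\Upsilon(G)=[n]\ni n$. If $m\geq 1$, then $K_m$ is complete, so by Proposition~\ref{prop:complete or empty} we have $m\in\Upsilon(K_m)$. Since $G$ is obtained from $K_m$ by adding $k=n-m$ isolated vertices, Proposition~\ref{prop:add isolated} yields $m+(n-m)=n\in\Upsilon(G)$.

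There is no serious obstacle here: the entire content of the proposition is the trivial pigeonhole remark that ``at least $n$'' out of $n$ columns means ``all $n$ columns,'' which collapses the adjacency relation of an $n$-row graph of an $n\times n$ matrix to ``both rows are all-ones.'' The only mild bookkeeping is handling the degenerate cases $m=0$ (empty graph) and $m=n$ (complete graph), which are covered uniformly by Proposition~\ref{prop:complete or empty} together with the convention $\III_0=K_0=\emptyset$.
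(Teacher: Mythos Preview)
Your proof is correct and follows essentially the same route as the paper's: invoke Theorem~\ref{thm:rowgraph}, observe that an overlap in at least $n$ of $n$ columns forces both rows to be all-ones, and use Propositions~\ref{prop:complete or empty} and~\ref{prop:add isolated} for the converse. One small slip: the biconditional ``$v_i$ is non-isolated iff row $i$ is all-ones'' fails when exactly one row is all-ones (that vertex is then isolated), though your conclusion $G\cong K_m\cup\III_{n-m}$ survives since $K_1\cup\III_{n-1}=\III_n$; the paper sidesteps this by taking $m$ to be the number of non-isolated vertices rather than the number of all-ones rows.
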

\begin{proof}
By definition, $G$ is an $n$-competition graph if and only if $n \in \Upsilon(G)$.
To show the ``if" part, suppose that $G\cong K_m\cup \III_{n-m}$ for some integer $m$, $0 \le m \le n$.
By Proposition~\ref{prop:complete or empty}, $m\in\Upsilon(K_m)$.
By Proposition~\ref{prop:add isolated}, $m+(n-m)\in\Upsilon(G)$.
To show the ``only if" part, suppose that $G$ is an $n$-competition graph.
Then $G$ is isomorphic to the $n$-row graph of a matrix $M$ of order $n$ by Theorem~\ref{thm:rowgraph}.
Suppose that $u$ is a non-isolated vertex in $G$.
Then $u$ is adjacent to a vertex $v$ in $G$, so $|\Lambda_M(u) \cap \Lambda_M(v)|=n$.
Thus we may conclude that each row of $M$ corresponding to a non-isolated vertex is the all-one vector in $\RR^n$.
Thus the subgraph of $G$ induced by non-isolated vertices is a clique.
Hence $G=K_m\cup \III_{n-m}$ where $m$ is the number of non-isolated vertices in $G$.
\end{proof}

\begin{Cor}\label{cor:K_n}
Suppose that a graph $G$ with $n$ vertices has no isolated vertices.
Then $G$ is an $n$-competition graph if and only if $G=K_n$.
\end{Cor}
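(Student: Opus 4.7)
The plan is to deduce the corollary directly from Proposition~\ref{prop:complete}. First I would invoke that proposition to recast the hypothesis ``$G$ is an $n$-competition graph'' as the structural statement $G \cong K_m \cup \III_{n-m}$ for some $m \in \{0, 1, \ldots, n\}$. Then I would use the assumption that $G$ has no isolated vertices to pin down $m$.

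The main step is to observe that all $n-m$ vertices of the $\III_{n-m}$ summand are automatically isolated in the disjoint union, so the no-isolated-vertex hypothesis forces $n-m=0$, i.e., $m=n$, giving $G \cong K_n$. One small subtlety is that when $m=1$ the single vertex of $K_1$ is itself isolated inside $K_1 \cup \III_{n-1}$, but this case is ruled out once $m=n\ge 2$; and the cases $n\le 1$ are trivial since then the hypothesis ``no isolated vertices'' is either vacuous (for $n=1$) or places us in the $G=K_0$ regime (for $n=0$).

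For the converse, I would simply note that $K_n$ is complete, so Proposition~\ref{prop:complete or empty} gives $\Upsilon(K_n)=[n]$, which contains $n$. Hence $G=K_n$ is indeed an $n$-competition graph.

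There is no substantive obstacle; the corollary is a bookkeeping consequence of Proposition~\ref{prop:complete}, amounting to the observation that forbidding isolated vertices collapses the $\III_{n-m}$ summand in the structural decomposition supplied by that proposition.
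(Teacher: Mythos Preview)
Your proposal is correct and matches the paper's approach: the corollary is stated without proof in the paper, as it follows immediately from Proposition~\ref{prop:complete} by observing that the $\III_{n-m}$ summand must vanish when $G$ has no isolated vertices. Your treatment of the converse via Proposition~\ref{prop:complete or empty} is also fine (one could equally invoke the ``if'' direction of Proposition~\ref{prop:complete} with $m=n$); the only minor wording slip is that for $n=1$ the hypothesis ``no isolated vertices'' is not vacuous but rather never satisfied, which still makes the biconditional vacuously true.
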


\begin{Cor}\label{cor:upsilon_n}
Let $G$ be a graph with $n$ vertices.
Then $\Upsilon(G)=[n]$ if and only if $G\cong K_m \cup \III_{n-m}$ for some $m\in \{0,\ldots,n\}$.
\end{Cor}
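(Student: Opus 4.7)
My plan is to deduce this directly from Proposition~\ref{prop:complete} together with Propositions~\ref{prop:complete or empty} and~\ref{prop:add isolated}, which provide exactly the machinery needed.

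For the ``only if'' direction, I would simply observe that if $\Upsilon(G) = [n]$, then in particular $n \in \Upsilon(G)$, so $G$ is an $n$-competition graph, and Proposition~\ref{prop:complete} forces $G \cong K_m \cup \III_{n-m}$ for some $m \in \{0, \ldots, n\}$.

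For the ``if'' direction, suppose $G \cong K_m \cup \III_{n-m}$ for some $m \in \{0, \ldots, n\}$. The cases $m = 0$ and $m = n$ follow immediately from Proposition~\ref{prop:complete or empty}, since $G$ is then either empty or complete on $n$ vertices. For $0 < m < n$, I would first apply Proposition~\ref{prop:complete or empty} to $K_m$ to conclude $\Upsilon(K_m) = [m]$, and then apply Proposition~\ref{prop:add isolated} with $k = n - m$ to obtain, for every $p \in [m]$, the inclusion
\[
\Upsilon(G) \supset \{p, p+1, \ldots, p + (n-m)\}.
\]
Taking the union of these sets as $p$ ranges over $\{1, 2, \ldots, m\}$ yields $[n]$, so $\Upsilon(G) \supset [n]$. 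The reverse inclusion $\Upsilon(G) \subset [n]$ is built into the definition of $\Upsilon(G)$.

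Since both directions reduce cleanly to already-established propositions, there is no substantive obstacle here; the only care needed is verifying that the union of the arithmetic progressions actually exhausts $[n]$, and checking that the boundary cases $m = 0$ (where $K_0 = \emptyset$ and $G = \III_n$) and $m = n$ (where $G = K_n$) are handled by Proposition~\ref{prop:complete or empty} rather than by Proposition~\ref{prop:add isolated}.
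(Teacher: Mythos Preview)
Your proof is correct. The ``only if'' direction matches the paper exactly. For the ``if'' direction, however, you take a different route: the paper simply writes down a single explicit matrix $M$ of order $n$ whose first $m$ rows are all-one and whose remaining $n-m$ rows are all-zero, and observes that the $p$-row graph of $M$ is $K_m\cup\III_{n-m}$ for every $p\in[n]$ simultaneously. Your argument instead bootstraps from $\Upsilon(K_m)=[m]$ via Proposition~\ref{prop:add isolated}, taking a union of shifted intervals to cover $[n]$. The paper's construction is more economical (one matrix handles every $p$ at once and needs no case split on $m$), while your approach has the virtue of reusing already-proved propositions without any new construction; both are entirely valid and of comparable length.
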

\begin{proof}
The ``only if'' part immediately follows by Proposition~\ref{prop:complete}.
To show the ``if'' part, suppose that $G\cong K_m\cup \III_{n-m}$ for some $m$, $0\leq m\leq n$.
Let $M$ be a square $(0,1)$-matrix of order $n$ such that the first $m$ rows are all-one vector and the other $n-m$ rows are all-zero vector.
Then it is easy to check that the $p$-row graph of $M$ is isomorphic to $G$ for each $p\in[n]$.
\end{proof}

\begin{Cor}\label{cor:union_of_complete}
If $G$ is a disjoint union of complete graphs with $n$ vertices, then $\Upsilon(G)\supset[n-1]$.
\end{Cor}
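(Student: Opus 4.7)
My plan is to prove the corollary by exhibiting, for each $p \in [n-1]$, an $n \times n$ $(0,1)$-matrix whose $p$-row graph is $G$, and then invoking Theorem~\ref{thm:rowgraph}. Write $G = K_{n_1} \cup \cdots \cup K_{n_k}$ with $n_1 + \cdots + n_k = n$.

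The first construction I would try is a sparse one using $k + p - 1$ columns: one column per component $K_{n_i}$ carrying $1$s exactly on that component's rows, together with $p - 1$ all-one columns. This achieves within-component intersections of size $p$ and cross-component intersections of size $p - 1$, and via Proposition~\ref{prop:prow} it covers the values $p \in \{1, \ldots, n - k + 1\}$. The obstacle is that it requires $k + p - 1 \le n$, which fails for $p$ close to $n$ when $k \ge 3$. To handle the large-$p$ regime I would switch to a dense complement construction: set $s := n - p \in [n-1]$, pick $k$ distinct $s$-subsets $S_1, \ldots, S_k \subseteq [n]$ (possible since $k \le n \le \binom{n}{s}$ for every $s \in [n-1]$), partition the $n$ row indices into blocks $R_1, \ldots, R_k$ with $|R_i| = n_i$, and define an $n \times n$ matrix $M$ by placing a $0$ in position $(r,j)$ exactly when $r \in R_i$ and $j \in S_i$, and a $1$ otherwise.

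To verify the construction I would only need two intersection computations. For distinct $u, v \in R_i$ the rows are identical and have exactly $n - s = p$ ones, so $|\Lambda_M(u) \cap \Lambda_M(v)| = p$. For $u \in R_i$ and $v \in R_j$ with $i \ne j$ we have $\Lambda_M(u) \cap \Lambda_M(v) = [n] \setminus (S_i \cup S_j)$, and since $S_i$ and $S_j$ are distinct sets of the same size $s$, $|S_i \cup S_j| \ge s + 1$, yielding $|\Lambda_M(u) \cap \Lambda_M(v)| \le n - s - 1 = p - 1$. Hence the $p$-row graph of $M$ is exactly $G$, and Theorem~\ref{thm:rowgraph} gives $p \in \Upsilon(G)$. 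The hard part is really the conceptual flip from the sparse first attempt to the dense complement construction; once that is made, the counting $k \le n \le \binom{n}{s}$ and the two intersection bounds are immediate.
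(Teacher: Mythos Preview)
Your proposal is correct, and the dense complement construction you settle on is exactly the paper's proof: with your $s$ playing the role of the paper's $i$, the paper likewise chooses distinct $s$-subsets $S_1,\ldots,S_k\subset[n]$ (using $k\le n\le\binom{n}{s}$) and sets $\Lambda_M(v)=[n]\setminus S_j$ for every $v$ in the $j$th component, then reads off that $G$ is the $(n-s)$-row graph of $M$. The sparse first attempt and the discussion around it are unnecessary but harmless.
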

\begin{proof}
Let $X_1,\ldots,X_k$ be the complete components of $G$.
Then $k\leq n$.
Now fix $i\in [n-1]$.
Then $k\leq {n \choose i}$.
Let $S_1,\ldots,S_k$ be distinct $i$-subsets of $[n]$.
Let $M$ be a matrix of order $n$ satisfying $\Lambda_M(v)=[n]\setminus S_j$ for each vertex $v$ in $X_j$ for each $j\in [k]$.
Then it is easy to check that $G$ is the $(n-i)$-row graph of $M$ and so $n-i\in\Upsilon(G)$ by Theorem~\ref{thm:rowgraph}.
\end{proof}

\begin{Ex}
The competition-realizer may be empty for some graph. For example,
for the complete bipartite graph $K_{3,3}$, $\Upsilon(K_{3,3})=\emptyset$.
To see why, we note that the number of vertices of $K_{3,3}$ is $6$ and $\theta_e(K_{3,3})=9$.
Therefore $1\notin \Upsilon(K_{3,3})$ by Theorem~\ref{thm:pECC}.
By Proposition~\ref{prop:lambda}, $5\notin \Upsilon(K_{3,3})$ and $6\notin \Upsilon(K_{3,3})$.
Suppose that $K_{3,3}$ is a $p$-competition graph for some $p \in \{2,3,4\}$.
Then $G$ is isomorphic to the $p$-row graph of a square $(0,1)$-matrix $M_p$ of order $6$ by Theorem~\ref{thm:rowgraph}.

Consider the case $p=4$.
Then each row of $M_4$ contains at least five $1$s by Proposition~\ref{prop:lambda}.
This implies that any two rows of $M_4$ have at least four common $1$s and so $G$ is isomorphic to $K_6$, which is a contradiction.
Thus $4\notin \Upsilon(K_{3,3})$.

Now consider the case $p=3$.
Then each row of $M_3$ contains at least four $1$s by Proposition~\ref{prop:lambda}.
This implies that any two rows of $M_3$ have at least two common $1$s.
If there is a row containing at least five $1$s, then it shares at least three common $1$s with each of the other vertices, which is impossible.
Thus each row of $M_3$ contains exactly four $1$s.
Since $K_{3,3}$ has a partite set of size $3$, we may assume that $M_3$ contains the following submatrix by permuting columns, if necessary:
\[
\begin{bmatrix}
  1 & 1 & 1 & 1 & 0 & 0 \\
  0 & 0 & 1 & 1 & 1 & 1 \\
  1 & 1 & 0 & 0 & 1 & 1
\end{bmatrix}.
\]
Now we take a vertex $u$ in the other partite set.
% {\color{red}and ${\bf r}_u$ be the row of $M_3$ corresponding to $u$}.
Then $u$ is adjacent to the vertex corresponding to each row of the above submatrix.
To have $u$ and the vertex corresponding to the first row of the above submatrix be adjacent, $\Lambda_{M_3}(u)\cap \{1,2,3,4\}$ is one of $\{1,2,3\}, \{2,3,4\}, \{1,2,4\}$, $\{1,3,4\}$, and $\{1,2,3,4\}$.
Then, in case of $\{1,2,3,4\}$, $u$ is not adjacent to the vertices corresponding to the second row and third row;
in case of $\{1,2,3\}$ or $\{1,2,4\}$, $u$ is not adjacent to the vertex corresponding to the second row; in case of $\{1,3,4\}$ or $\{2,3,4\}$, $u$ is not adjacent to the vertex corresponding to the third row.
Therefore we reach a contradiction.
Thus $3\notin \Upsilon(K_{3,3})$.

Now consider the case $p=2$.
Then each row of $M_2$ contains at least three $1$s by Proposition~\ref{prop:lambda}.
Suppose that there is a row ${\bf r}_1$ containing at least four $1$s.
Let $v_1$ be the vertex corresponding to ${\bf r}_1$ and $v_2$ and $v_3$ be the other vertices in the partite set to which $v_1$ belongs.
We may assume that $\Lambda_{M_2}(v_1)\supset \{1,2,3,4\}$.
%${\bf r}_1$ has $1$s in the first component through the fourth component.
Since $v_1$ and $v_2$ are not adjacent, ${\bf r}_1$ has exactly four $1$s and we may assume that the row ${\bf r}_2$ corresponding to $v_2$ has $1$ in the fourth component through the sixth component.
Then the row corresponding to $v_3$ must share at least two $1$s with ${\bf r}_1$ or ${\bf r}_2$ and we reach a contradiction.
Thus each row of $M_2$ contains exactly three $1$s.
If there are two vertices $w_1$ and $w_2$ in a partite set $W$ such that their corresponding rows do not share $1$s, then the row corresponding to the remaining vertex in $W$ must share at least two $1$s with one of the rows corresponding to $w_1$ and $w_2$, and we reach a contradiction.
Therefore the rows corresponding to two vertices in the same partite set share exactly one $1$.
Thus we may assume that $M_2$ contains the following submatrix by permuting columns, if necessary:
\[
\begin{bmatrix}
  1 & 1 & 1 & 0 & 0 & 0 \\
  0 & 0 & 1 & 1 & 1 & 0 \\
  1 & 0 & 0 & 0 & 1 & 1
\end{bmatrix}.
\]
Now we take a vertex $x$ in the other partite set $X$.
% {\color{red}and ${\bf r}_x$ be the row of $M_2$ corresponding to $x$}.
Then $x$ is adjacent to each of the vertices corresponding to the rows of the above submatrix.
Therefore $\Lambda_{M_2}(x)=\{1,3,5\}$.
%the first, the third, and the fifth component of ${\bf r}_x$ equal $1$}.
Since $x$ is arbitrarily chosen, the rows corresponding to the other two vertices in $X$ also have $1$ in the first, the third, and the fifth component, which is impossible. Hence we have shown that $\Upsilon({K_{3,3}})=\emptyset$.
\end{Ex}

Now we introduce the notion of condensation of a graph.
Let $G$ be a graph with $n$ vertices.
Two vertices $u$ and $v$ of $G$ are said to be \emph{homogeneous}, denoted by $u \sim v$, if they have the same closed neighborhood.
Clearly $\sim$ is an equivalence relation on $V(G)$.
We denote the equivalence class containing a vertex $u$ of $G$ by $[u]$.
%Then the condensation $G/\!\!\sim$ of $G$ has the vertex set and the edge set as follows:
Then we define a new simple graph $G/\!\!\sim$ for $G$ by
\[
V(G / \!\!\sim)=\{[u] \mid u \in V(G)\} \quad  \text{ and } \quad  E(G / \!\!\sim)=\{[u][v] \mid  u, v \in V(G) \text{ and }   uv \in E(G)\}.
\]
See Figure~\ref{fig:contraction} for an illustration.
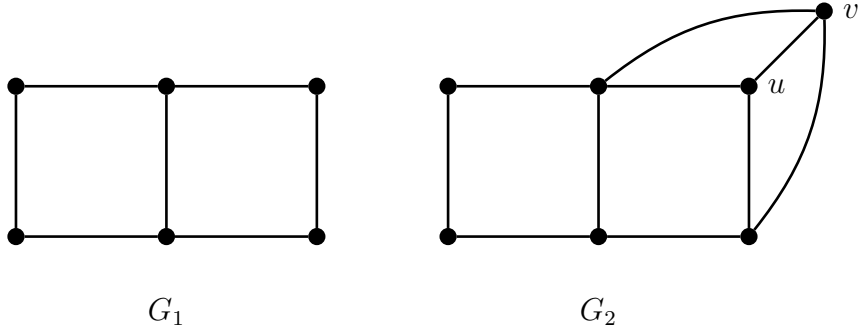
\begin{figure}
\centering{
\begin{tikzpicture}[every node/.style={transform shape}]
    \tikzset{mynode/.style={inner sep=2.3pt,fill,outer sep=0,circle}}
    \node (ghost) at (-3.4,0) {};
    \node[mynode] (1) at (-2,1) {};
    \node[mynode] (2) at (0,1) {};
    \node[mynode] (3) at (2,1) {};
    \node[mynode] (4) at (-2,-1) {};
    \node[mynode] (5) at (0,-1) {};
    \node[mynode] (6) at (2,-1) {};
    \node (name) at (0,-2) {$G_1$};

    \draw[line width=1pt] (2)--(3)--(6)--(5)--(4)--(1)--(2)--(5);
\end{tikzpicture}
\hspace{3em}
\begin{tikzpicture}[every node/.style={transform shape}]
    \tikzset{mynode/.style={inner sep=2.3pt,fill,outer sep=0,circle}}
    \node[mynode] (1) at (-2,1) {};
    \node[mynode] (2) at (0,1) {};
    \node[mynode] (3) at (2,1) {};
    \node[mynode] (4) at (-2,-1) {};
    \node[mynode] (5) at (0,-1) {};
    \node[mynode] (6) at (2,-1) {};
    \node[mynode] (7) at (3,2) {};
    \node (name) at (0,-2) {$G_2$};

    \draw[line width=1pt] (2)--(3)--(6)--(5)--(4)--(1)--(2)--(5);
    \draw[line width=1pt] (3)--(7);
    \path[line width=1pt] (2) edge [bend left=20] node {} (7);
    \path[line width=1pt] (6) edge [bend right=20] node {} (7);
    \draw[decorate,decoration={raise=3pt}] (3) node[right=3pt] {$u$};
    \draw[decorate,decoration={raise=3pt}] (7) node[right=3pt] {$v$};
\end{tikzpicture}}
\caption{$G_1\cong G_2/\!\!\sim$ (in $G_2$, $u\sim v$)}
\label{fig:contraction}
\end{figure}
We call $G/\!\!\sim$ the \emph{condensation} of $G$ for a graph $G$.

We note the following: \\
{\phantom{$\Leftrightarrow$} Two vertices $u$ and $v$ are adjacent in $G$
$\Leftrightarrow$ $v \in N_G[u]$
\\ $\Leftrightarrow$ $v \in N_G[u']$ for any $u' \in [u]$
$\Leftrightarrow$ $u' \in N_G[v]$ for any $u' \in [u]$
\\
$\Leftrightarrow$ $u' \in N_G[v']$ for any $u' \in [u]$ and $v' \in [v]$
\\
$\Leftrightarrow$
$u'$ and $v'$ are adjacent in $G$ for any $u' \in [u]$ and $v' \in [v]$.}
\\
Therefore $G / \!\!\sim$ is well-defined.

It is obvious that
\begin{itemize}
    \item[($\star$)] for each isolated vertex in $G$, its equivalence class is isolated in $G/\!\!\sim$.
\end{itemize}

The notion of $p$-row graph provides a way of getting information on the competition-realizer for a graph $G$ from the competition-realizer for a simpler graph $G/\!\!\sim$ as seen in the following results.
\begin{Prop}\label{prop:diam}
The condensation of $G$ has the same diameter as $G$ for a connected non-complete graph $G$.
%A connected non-complete graph $G$ and the condensation of $G$ have the same diameter.
\end{Prop}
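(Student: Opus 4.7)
The plan is to show that the canonical projection $\pi\colon V(G)\to V(G/\!\!\sim)$, $\pi(u)=[u]$, preserves distances between points in distinct equivalence classes, and then to observe that the non-completeness assumption forces any diameter-realizing pair to lie in distinct classes. More precisely, I will prove the key equality
\[
d_{G/\sim}([u],[v])=d_G(u,v) \quad \text{whenever } [u]\neq [v],
\]
and then deduce the proposition by taking maxima.

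For the upper bound $d_{G/\sim}([u],[v])\leq d_G(u,v)$, start with a shortest $u$--$v$ path $u=x_0,x_1,\ldots,x_k=v$ in $G$. For each $i$, the adjacency $x_ix_{i+1}\in E(G)$ forces either $[x_i]=[x_{i+1}]$ or $[x_i][x_{i+1}]\in E(G/\!\!\sim)$ by the definition of the condensation. Deleting repetitions from the sequence $[x_0],[x_1],\ldots,[x_k]$ produces a walk from $[u]$ to $[v]$ in $G/\!\!\sim$ of length at most $k$, so $d_{G/\sim}([u],[v])\le d_G(u,v)$.

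For the reverse inequality, I will lift a shortest path in $G/\!\!\sim$ to a path in $G$. Let $[u]=[w_0],[w_1],\ldots,[w_k]=[v]$ be a shortest $[u]$--$[v]$ path in $G/\!\!\sim$; in particular, the classes $[w_0],\ldots,[w_k]$ are pairwise distinct. Set $x_0=u$, $x_k=v$, and choose $x_i\in[w_i]$ arbitrarily for $1\le i\le k-1$. Using the characterization derived in the excerpt — namely, that adjacency $[a][b]\in E(G/\!\!\sim)$ is equivalent to $a'b'\in E(G)$ for every $a'\in[a]$ and $b'\in[b]$ — each $x_ix_{i+1}$ is an edge of $G$. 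Since the $x_i$ lie in pairwise distinct classes they are pairwise distinct vertices, so $x_0,x_1,\ldots,x_k$ is a $u$--$v$ path in $G$ of length $k$, giving $d_G(u,v)\le d_{G/\sim}([u],[v])$. This step is the only delicate one, but the ``all-or-nothing'' edge behaviour between equivalence classes makes the lift automatic.

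Finally, to conclude the proposition, first note that $\mathrm{diam}(G/\!\!\sim)\le \mathrm{diam}(G)$ follows immediately from the equality above, since the maximum in $\mathrm{diam}(G/\!\!\sim)$ is taken over a subset of the pairs appearing in $\mathrm{diam}(G)$. For the reverse, since $G$ is connected and non-complete, there exist vertices $u,v$ with $d_G(u,v)=\mathrm{diam}(G)\ge 2$. Homogeneous vertices are necessarily adjacent (because $u\in N_G[u]=N_G[v]$ forces $u=v$ or $uv\in E(G)$), so $[u]\neq [v]$; the key equality then gives
\[
\mathrm{diam}(G/\!\!\sim)\ge d_{G/\sim}([u],[v])=d_G(u,v)=\mathrm{diam}(G),
\]
which completes the argument.
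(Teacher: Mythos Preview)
Your proof is correct and follows essentially the same approach as the paper: both arguments show that paths in $G$ project to walks in $G/\!\!\sim$ of no greater length and that paths in $G/\!\!\sim$ lift back to paths in $G$, using the all-or-nothing adjacency between equivalence classes. Your formulation via the intermediate distance-preservation equality $d_{G/\sim}([u],[v])=d_G(u,v)$ for $[u]\neq[v]$ is somewhat cleaner than the paper's, which works directly with diameter-realizing pairs and phrases things in terms of induced paths.
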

\begin{proof}
%By the definition of condensation of a graph, there exists an induced $(x,y)$-path of length $l$ in $G$ if and only if there exists an induced $([x],[y])$-path of length $l$ in $G/\!\!\sim$
%for some vertices $x$ and $y$ in $G$ and an integer $l \geq 2$.
Let $m$ be the diameter of $G$.
Since $G$ is not complete, $m\geq 2$.
Then there exists an induced $(u,v)$-path of length $m$ for some vertices $u$ and $v$ and there is no induced $(u,v)$-path of length $l$ for any $l < m$.
Thus, by the definition of condensation of a graph, there exists an induced $([u],[v])$-path of length $m$ and there is no induced $([u],[v])$-path of length $l$ for any $2 \leq l < m$ in $G/\!\!\sim$.
If there exists a $([u],[v])$-path of length $1$, then $u$ and $v$ are adjacent, which contradicts the choice of $u$ and $v$ so that $d_G(u,v)=m\geq 2$.
Therefore the diameter of $G/\!\!\sim$ is greater than or equal to $m$.

Let $m'$ be the diameter of $G/\!\!\sim$ for a nonnegative integer $m'$.
Since $G$ is non-complete, $m' \geq 1$.
Then there exists an induced $([x],[y])$-path of length $m'$ in $G/\!\!\sim$ and there is no induced $([x],[y])$-path of length $l$ for any $l < m'$ and some vertices $x$ and $y$ in $G$.
Therefore, by the definition of condensation of a graph, there is an induced $(x,y)$-path of length $m'$ and there is no induce $(x,y)$-path of length $l$ for any $l < m'$ in $G$.
Thus the diameter of $G/\!\!\sim$ is less than or equal to $m$ and this completes the proof.
%it is also true that the diameter of $G/\!\!\sim$ is less than or equal to $m$.
\end{proof}

\begin{Prop}\label{prop:relative matrix}
A graph $G$ is a $p$-competition graph if and only if there exists a square matrix $M$ such that $G$ is a $p$-row graph of $M$ and the rows corresponding to two homogeneous vertices are identical.
\end{Prop}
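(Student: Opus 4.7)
The forward direction is immediate from Theorem~\ref{thm:rowgraph}. For the reverse direction, I would start from a square $(0,1)$-matrix $A$ of order $n = |V(G)|$ whose $p$-row graph is $G$ (such a matrix exists by Theorem~\ref{thm:rowgraph}), and then equalize rows within each $\sim$-equivalence class without disturbing the $p$-row graph.

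The construction I plan to use is the following. Fix a representative $u_C \in C$ for every equivalence class $C$ of $\sim$, and let $M$ be the matrix obtained from $A$ by replacing, for each $v \in V(G)$, the row indexed by $v$ with the row of $A$ indexed by $u_{[v]}$. By design, rows corresponding to homogeneous vertices are identical in $M$, and $\Lambda_M(v) = \Lambda_A(u_{[v]})$ for every vertex $v$. It then remains to check that the $p$-row graph of $M$ is still $G$.

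The check splits into two cases depending on whether two given distinct vertices $v, w$ lie in the same equivalence class. When $[v] \neq [w]$, homogeneity gives that $vw \in E(G)$ if and only if $u_{[v]} u_{[w]} \in E(G)$; since the latter adjacency corresponds, via the $p$-row graph of $A$, exactly to $|\Lambda_A(u_{[v]}) \cap \Lambda_A(u_{[w]})| \geq p$ and this intersection equals $|\Lambda_M(v) \cap \Lambda_M(w)|$, adjacency is preserved. When $[v] = [w]$ with $v \neq w$, the vertices $v$ and $w$ are adjacent in $G$ (distinct homogeneous vertices share their closed neighborhood and so are mutually adjacent), and in $M$ they share a common row, so I need $|\Lambda_A(u_{[v]})| \geq p$ to certify their adjacency in the $p$-row graph of $M$.

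The only real obstacle is this last inequality, but I expect it to come for free: when $[v]$ contains at least two vertices, $u_{[v]}$ is adjacent in $G$ to any other element of $[v]$, so already in $A$ the corresponding two rows intersect in at least $p$ columns, forcing $|\Lambda_A(u_{[v]})| \geq p$. Once this is established, $M$ is a square matrix with $G$ as its $p$-row graph and identical rows on each homogeneity class, and the proposition is proved.
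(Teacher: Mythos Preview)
Your proposal is correct and follows essentially the same approach as the paper: start from a square matrix whose $p$-row graph is $G$ and replace each row by that of a fixed representative of its $\sim$-class. The paper simply asserts that the resulting matrix still has $p$-row graph $G$ (``it is easy to see''), whereas you spell out the verification, including the key observation that a class of size at least two forces $|\Lambda_A(u_{[v]})|\ge p$; your write-up is in fact more complete than the paper's.
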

\begin{proof}
The ``if'' part is obvious.
To show the ``only if'' part, suppose that a graph $G$ is a $p$-competition graph for some positive integer $p$.
Then, by Theorem~\ref{thm:rowgraph}, there exists a square matrix $M'$ such that $G$ is a $p$-row graph of $M'$.
If there are at least two homogeneous vertices, then we fix one row among the rows corresponding to them and replace the remaining rows with the fixed row.
We denote by $M$ the matrix obtained by repeatedly applying the above procedure.
It is easy to see that $G$ is a $p$-row graph of $M$.
\end{proof}

\begin{Prop}\label{prop:contraction}
Given a graph $G$ with $n$ vertices, suppose that $G / \!\!\sim$  is a $p$-row graph of a matrix $M$ satisfying the property that $M$ has $m$ columns for a positive  integer $m \le n$ and every row of $M$ has at least $p$ $1$s.
Then $\Upsilon(G)\supset\{p+i\mid i\in \{0,\ldots,n-m\}\}$.
\end{Prop}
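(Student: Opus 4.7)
The plan is to build, from the given $p$-row graph representation of $G/\!\!\sim$, an $n\times m$ $(0,1)$-matrix whose $p$-row graph is $G$ itself, and then to invoke Proposition~\ref{prop:prow}. Since the conclusion to be proved is literally the conclusion of Proposition~\ref{prop:prow} applied to an $n\times m$ matrix, this reduces everything to one construction.

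Concretely, list the equivalence classes as $[u_1],\ldots,[u_k]$ where $k=|V(G/\!\!\sim)|$, and set $n_j=|[u_j]|$, so $n_1+\cdots+n_k=n$. Let $r_j$ be the row of $M$ labelled by $[u_j]$. Form an $n\times m$ matrix $M'$ by replacing each row $r_j$ of $M$ with $n_j$ identical copies, and label these copies by the vertices in $[u_j]$; then the rows of $M'$ are indexed by $V(G)$.

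Next I would verify that the $p$-row graph of $M'$ is $G$ by checking the two cases for distinct vertices $x,y\in V(G)$. If $x,y\in[u_j]$ for the same $j$, then $x$ and $y$ are homogeneous in $G$, so $x\in N_G[x]=N_G[y]$ forces $xy\in E(G)$; on the matrix side, the rows corresponding to $x$ and $y$ are both copies of $r_j$, which by hypothesis contains at least $p$ ones, so they share at least $p$ common nonzero columns and $xy$ is also an edge of the $p$-row graph of $M'$. Here the hypothesis ``every row of $M$ has at least $p$ ones'' is used essentially and is the only nontrivial ingredient beyond bookkeeping. If instead $x\in[u_j]$ and $y\in[u_{j'}]$ with $j\neq j'$, then the rows of $M'$ corresponding to $x$ and $y$ are copies of $r_j$ and $r_{j'}$, so they share at least $p$ common ones if and only if $r_j$ and $r_{j'}$ do in $M$, which in turn happens if and only if $[u_j][u_{j'}]\in E(G/\!\!\sim)$, which by the definition of condensation is equivalent to $xy\in E(G)$.

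Thus $G$ is the $p$-row graph of the $n\times m$ matrix $M'$, and since $m\le n$ by assumption, Proposition~\ref{prop:prow} immediately yields $\Upsilon(G)\supset\{p+i\mid i\in[n-m]\cup\{0\}\}$, which is exactly the desired inclusion. The only real point requiring care is the within-class check, where one must remember both that homogeneous distinct vertices are automatically adjacent in $G$ and that the lower bound $p$ on the number of ones in each row of $M$ is precisely what is needed to reproduce those intra-class edges in the $p$-row graph of $M'$.
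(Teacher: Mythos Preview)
Your proof is correct and follows essentially the same approach as the paper: both construct the $n\times m$ matrix by duplicating each row of $M$ according to the size of the corresponding equivalence class, verify the two cases (same class versus different classes) to show that $G$ is the $p$-row graph of this matrix, and then invoke Proposition~\ref{prop:prow}. Your write-up is slightly more explicit in noting that distinct homogeneous vertices are automatically adjacent in $G$, which is exactly the point where the hypothesis on the row sums is needed.
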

\begin{proof}
Let $n_j$ be the size of equivalence class under $\sim$ corresponding to the $j$th row of $M$ for each $j$, $1\leq j \leq m$.
We replace the $j$th row of $M$ with $n_j$ copies of it to obtain the matrix $M^*$ which contains $M$ as a submatrix. We note that the size of $M^*$ is $n \times m$.
Suppose that $u$ and $v$ are two distinct vertices in $G$.
Then let $\mathbf{r}_u$ and $\mathbf{r}_v$ be the rows of $M^*$ corresponding to $u$ and $v$, respectively.
If $u$ and $v$ are not homogenous, then they belong to distinct equivalence classes under $\sim$ and the following are true:\vspace{-1ex}
\begin{itemize}
\item[\phantom{$\Leftrightarrow$}]Two vertices $u$ and $v$ are adjacent in $G$\vspace{-1ex}
\item[$\Leftrightarrow$]
 $[u]$ and $[v]$ are adjacent in $G/\!\!\sim$\vspace{-1ex}
\item[$\Leftrightarrow$] the row corresponding to $[u]$ and the row corresponding to $[v]$ have at least $p$ common $1$s in $M$\vspace{-1ex}
%\item[$\Leftrightarrow$]  $\mathbf{r}_u$ and $\mathbf{r}_v$ have at least $p+i$ common $1$s\vspace{-1ex}
\item[$\Leftrightarrow$] $u$ and $v$ are adjacent in the $p$-row graph of $M^*$.\vspace{-1ex}
\end{itemize}
Suppose that $u$ and $v$ are homogenous. Then the rows ${\bf r}_u$ and ${\bf r}_v$ are identical.
By the hypothesis, every row of $M$ has at least $p$ $1$s.
Thus ${\bf r}_u$ and ${\bf r}_v$ have at least $p$ common $1$s and so $u$ and $v$ are adjacent in the $p$-row graph of $M^*$.
Hence $G$ is the $p$-row graph of $M^*$ and so, by Proposition~\ref{prop:prow}, $G$ is a $(p+i)$-competition graph, that is, $p+i \in \Upsilon(G)$ for any $i\in \{0,\ldots,n-m\}$.
\end{proof}

For a positive integer $p$ and the $p$-row graph $G$ of a matrix $M$, each non-isolated vertex in $G$ has at least $p$ $1$s in the row of $M$ corresponding to it and so the following corollary is immediately true by the above proposition.
\begin{Cor}\label{cor:contraction}
Given a graph $G$ with $n$ vertices, suppose that $G / \!\!\sim$ has no isolated vertices and is a $p$-row graph of a matrix $M$ having $m$ columns for a positive  integer $m \le n$.
Then $\Upsilon(G)\supset\{p+i \mid i\in \{0,\ldots,n-m\}\}$.
\end{Cor}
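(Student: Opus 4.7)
The plan is to derive this corollary directly from Proposition~\ref{prop:contraction} by verifying the extra hypothesis required there, namely that every row of $M$ has at least $p$ ones. Given that hypothesis, the conclusion $\Upsilon(G)\supset\{p+i\mid i\in\{0,\ldots,n-m\}\}$ is literally the conclusion of Proposition~\ref{prop:contraction}, so there is nothing further to prove.

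To verify the row-weight condition, I would argue as follows. Let $[u]$ be any vertex of $G/\!\!\sim$ and let $\mathbf{r}_{[u]}$ denote the row of $M$ corresponding to $[u]$. Since $G/\!\!\sim$ has no isolated vertices by assumption, there exists some vertex $[v]$ in $G/\!\!\sim$ that is adjacent to $[u]$. By the definition of the $p$-row graph of $M$, the rows $\mathbf{r}_{[u]}$ and $\mathbf{r}_{[v]}$ share at least $p$ columns in which both entries are $1$. In particular, $\mathbf{r}_{[u]}$ itself has a $1$ in each of those columns, so $\mathbf{r}_{[u]}$ contains at least $p$ ones. Since $[u]$ was arbitrary, every row of $M$ has at least $p$ ones, as required.

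Having checked this hypothesis, I then invoke Proposition~\ref{prop:contraction} directly: its conclusion gives $\Upsilon(G)\supset\{p+i\mid i\in\{0,\ldots,n-m\}\}$, which is exactly what we want. There is essentially no obstacle here, since the whole point is the observation (already flagged in the paragraph preceding the corollary in the paper) that the ``every row has at least $p$ ones'' hypothesis of Proposition~\ref{prop:contraction} is automatically satisfied whenever the graph being represented as a $p$-row graph has no isolated vertex.
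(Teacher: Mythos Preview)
Your proof is correct and follows exactly the paper's own reasoning: the paragraph preceding the corollary observes that every non-isolated vertex in a $p$-row graph has at least $p$ ones in its row, and you spell this out and then invoke Proposition~\ref{prop:contraction}, which is precisely what the paper does.
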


\begin{Cor}\label{cor:contraction_pComp}
Given a graph $G$ with $n$ vertices, suppose that
the number of non-isolated vertices in $G/\!\!\sim$ is $m$ for a positive integer $m \le n$.
%$G / \!\!\sim$ has $m$ non-isolated vertices none of which is isolated
Then {$$\Upsilon(G)\supset\{p+i\mid p\in \Upsilon(G/\!\!\sim),\ i\in \{0,\ldots,n-m\}\}.$$}
\end{Cor}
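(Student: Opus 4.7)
The plan is to derive the corollary by combining Theorem~\ref{thm:rowgraph} with Corollary~\ref{cor:contraction}. For each fixed $p\in\Upsilon(G/\!\!\sim)$, Theorem~\ref{thm:rowgraph} supplies a square $(0,1)$-matrix whose $p$-row graph is $G/\!\!\sim$, and Corollary~\ref{cor:contraction} then turns that matrix into the family $\{p+i\mid i\in\{0,\ldots,n-m\}\}\subset\Upsilon(G)$; unioning over all such $p$ yields the displayed inclusion.

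Concretely, the steps are: (i) fix any $p\in\Upsilon(G/\!\!\sim)$; (ii) by Theorem~\ref{thm:rowgraph}, pick a square $(0,1)$-matrix $M$ of order $|V(G/\!\!\sim)|$ whose $p$-row graph is $G/\!\!\sim$, using the hypothesis that the number of non-isolated vertices of $G/\!\!\sim$ is $m$ (read so that $G/\!\!\sim$ has exactly $m$ vertices, all non-isolated) to identify this order with $m$ and hence to guarantee that $G/\!\!\sim$ has no isolated vertex and that $M$ has $m\le n$ columns; (iii) apply Corollary~\ref{cor:contraction} to $G$ and $M$---both of its hypotheses are now verified---to obtain $\Upsilon(G)\supset\{p+i\mid i\in\{0,\ldots,n-m\}\}$; (iv) take the union of these inclusions over $p\in\Upsilon(G/\!\!\sim)$ to produce the stated set.

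The only step that requires real care is (ii), where one has to match the order of the matrix produced by Theorem~\ref{thm:rowgraph} with the parameter $m$ of Corollary~\ref{cor:contraction}; this works precisely because the hypothesis on the number of non-isolated vertices of $G/\!\!\sim$ forces $|V(G/\!\!\sim)|=m$. Once this identification is made, the rest of the argument is an immediate one-line invocation of Corollary~\ref{cor:contraction} followed by the routine union step, so no separate combinatorial construction of a digraph or a $p$-edge clique cover is needed.
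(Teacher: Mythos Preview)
Your proposal is correct and follows essentially the same route as the paper: fix $p\in\Upsilon(G/\!\!\sim)$, invoke Theorem~\ref{thm:rowgraph} to obtain a square $(0,1)$-matrix of order $m$ whose $p$-row graph is $G/\!\!\sim$, and then apply Corollary~\ref{cor:contraction}. Your explicit remark that the hypothesis must be read as ``$G/\!\!\sim$ has exactly $m$ vertices, all non-isolated'' is exactly the identification the paper's proof makes tacitly when it asserts the matrix has $m$ columns, so the two arguments coincide.
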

\begin{proof}
Suppose $p\in\Upsilon(G)$.
Then, by Theorem~\ref{thm:rowgraph}, $G/\!\!\sim$ is a $p$-row graph of a matrix $M$ having $m$ columns.
Thus by Corollary~\ref{cor:contraction}, %$G$ is a $(p+i)$-competition graph for any $i\in[n-m]\cup\{0\}$.
$p+i \in \Upsilon(G)$ for any $i\in \{0,\ldots,n-m\}$.
\end{proof}

%\begin{Cor}\label{cor:contraction_Upsilon}
%Given a graph $G$ with $n$ vertices, suppose that $G / \!\!\sim$ has $n'$ vertices none of which is isolated.
%Then
%\[
%\Upsilon(G) \supset \{p+i \mid p \in \Upsilon(G / \!\!\sim) \text{ and } i \in [n-n'] \cup \{0\}\}.
%\]
%\end{Cor}
%\begin{proof}
%By Theorem~\ref{thm:rowgraph}, $G/\!\!\sim$ is a $p$-row graph of a square $(0,1)$-matrix $M$ of order $n'$ and the corollary statement immediately follows from Corollary~\ref{cor:contraction_pComp}.
%\end{proof}

\begin{Rem}
Even if $G$ is a $p$-competition graph, $G/\!\!\sim$ may not be a $(p-i)$-competition graph for some $i\in [n-m]\cup\{0\}$ where $n=|V(G)|$ and $m=|V(G/\!\!\sim)|$.
For example, the graph $G_2$ in Figure~\ref{fig:contraction} is a $2$-competition graph.
To see why, we note that $G_1$ is the $2$-row graph of the matrix
\[
\begin{bmatrix}
  1 & 1 & 0 & 0 & 1 & 0 \\
  0 & 1 & 0 & 0 & 1 & 1 \\
  0 & 1 & 0 & 1 & 0 & 1 \\
  1 & 0 & 1 & 0 & 1 & 0 \\
  0 & 0 & 1 & 0 & 1 & 1 \\
  0 & 0 & 1 & 1 & 0 & 1
\end{bmatrix},
\]
so $G_1$ is a $2$-competition graph by Theorem~\ref{thm:rowgraph}.
Since $G_1$ is isomorphic to $G_2 /\!\!\sim$ and has no isolated vertices, $G_2$ is a $2$-competition graph by Corollary~\ref{cor:contraction_pComp}.
Yet, $G_1$, which is isomorphic to $G_2 /\!\!\sim$, is not a $1$-competition graph by Theorem~\ref{thm:ECC} since $|V(G_1)|=6 < 7=|E(G_1)|=\theta_e(G_1)$.
\end{Rem}

A {\it union} $G \cup H$ of two graphs $G$ and $H$ is the graph having its vertex set $V(G) \cup V(H)$ and edge set $E(G) \cup E(H)$.
In this paper, the union of $G$ and $H$ means their disjoint union which has an additional condition $V(G) \cap V(H) = \emptyset$.
A {\it join} $G \vee H$ of two vertex-disjoint graphs $G$ and $H$ is the graph having its vertex set $V(G) \cup V(H)$ and edge set $E(G)\cup E(H)\cup\{uv\mid u\in V(G), v\in V(H)\}$.

For a positive integer $n$, a nonnegative integer $k\leq n$, and the power set $\PPP([n])$ of $[n]$, we denote by ${\Psi}_{n,k}$ the simple graph with the vertex set $\PPP([n])$ and the edge set
\[\{ST\mid S,T\subset[n], |S\cup T| \leq k\}.\]

\begin{Thm}
Let $G$ be a connected graph with $n$ vertices and $k$ be a nonnegative integer less than $n$.
Then $G$ is an $(n-k)$-competition graph if and only if $G/\!\!\sim$ is isomorphic to an induced subgraph of $\Psi_{n,k}$.
\end{Thm}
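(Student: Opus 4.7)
The plan is to encode each vertex $u$ of $G$ by the subset $S_u := [n]\setminus \Lambda_M(u)$ of $[n]$, where $M$ is an $n\times n$ $(0,1)$-matrix realizing $G$ as an $(n-k)$-row graph. The key identity
\[
|\Lambda_M(u)\cap\Lambda_M(v)|\ge n-k \quad\Longleftrightarrow\quad |S_u\cup S_v|\le k
\]
translates adjacency in the $(n-k)$-row graph of $M$ directly into adjacency in $\Psi_{n,k}$, so the task reduces to arranging the $S_u$'s coherently with the equivalence relation $\sim$.

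For the ``only if'' direction, I would apply Proposition~\ref{prop:relative matrix} to choose $M$ whose rows are identical on each $\sim$-class. Then $u\mapsto S_u$ descends to a well-defined map $\bar\phi\colon V(G/\!\!\sim)\to\PPP([n])$ that preserves both edges and non-edges by the identity above. Injectivity is the real point: the connectedness of $G$ (assume $n\ge 2$; the case $n=1$ is immediate) ensures every vertex has a neighbor, and so $|\Lambda_M(u)|\ge n-k$ for all $u$. If $\bar\phi([u])=\bar\phi([v])$ with $u\ne v$, then $\Lambda_M(u)=\Lambda_M(v)$ has size at least $n-k$, forcing $u$ and $v$ to be adjacent in $G$; combined with the equality of their rows this yields $N_G[u]=N_G[v]$, contradicting $[u]\ne[v]$.

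For the ``if'' direction, take an induced-subgraph embedding $\psi\colon V(G/\!\!\sim)\hookrightarrow \PPP([n])$ and build the $n\times n$ matrix $M$ whose row for $u$ is the indicator of $[n]\setminus\psi([u])$. Between distinct $\sim$-classes the key identity delivers the required matching of adjacency. The subtle point is that any two homogeneous vertices $u\ne v$, which must be adjacent in $G$, should also be adjacent in the $(n-k)$-row graph of $M$; since their rows coincide, this amounts to $|\psi([u])|\le k$. I would obtain this from the connectedness of $G/\!\!\sim$: there is a neighbor $[v']$ of $[u]$ in $G/\!\!\sim$, so $|\psi([u])|\le|\psi([u])\cup\psi([v'])|\le k$. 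The degenerate case $|V(G/\!\!\sim)|=1$ forces $G\cong K_n$, and both sides of the equivalence are then automatic by Proposition~\ref{prop:complete or empty}.

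The main obstacle is precisely this coherence between the matrix encoding and the condensation: in the forward direction distinct $\sim$-classes have to land on distinct subsets, and in the backward direction vertices sharing a class have to become adjacent in the constructed row-graph. Both requirements reduce to the same uniform bound $|S_u|\le k$, which is exactly what the assumption that $G$ is connected (via the existence of a neighbor of every vertex) provides.
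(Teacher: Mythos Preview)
Your proposal is correct and follows essentially the same route as the paper: encode vertices by the complements $S_u=[n]\setminus\Lambda_M(u)$, use the identity $|\Lambda_M(u)\cap\Lambda_M(v)|\ge n-k\Leftrightarrow|S_u\cup S_v|\le k$, invoke Proposition~\ref{prop:relative matrix} for the forward direction, and use connectedness of $G/\!\!\sim$ to secure $|S_v|\le k$ in the backward direction. Your treatment is in fact slightly more careful than the paper's, since you explicitly verify injectivity of $\bar\phi$ (the paper simply asserts that taking the distinct rows of $M$ yields a matrix whose $(n-k)$-row graph is $G/\!\!\sim$), and you build the realizing matrix for $G$ directly rather than passing through Corollary~\ref{cor:contraction_pComp}.
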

\begin{proof}
To show the ``only if'' part, suppose that $G$ is an $(n-k)$-competition graph.
Then, by Proposition~\ref{prop:relative matrix}, there exists a square matrix $M$ of order $n$ such that $G$ is an $(n-k)$-row graph of $M$ and the rows corresponding to two homogeneous vertices are identical.
Let $M'$ be a submatrix of $M$ obtained by
%excluding identical rows from $M$.
taking all the distinct rows of $M$.
Then obviously $G/\!\!\sim$ is an $(n-k)$-row graph of $M'$.
Therefore two distinct vertices $[x]$ and $[y]$ are adjacent in $G/\!\!\sim$ if and only if $|\Lambda_{M'}([x])\cap \Lambda_{M'}([y])| \geq n-k$ if and only if $|([n]\setminus\Lambda_{M'}([x]))\cup ([n]\setminus\Lambda_{M'}([y]))| \leq k$ if and only if $[n]\setminus\Lambda_{M'}([x])$ and $[n]\setminus\Lambda_{M'}([y])$ are adjacent in $\Psi_{n,k}$.
Hence we have shown that $G/\!\!\sim$ is isomorphic to an induced subgraph of $\Psi_{n,k}$.

To show the ``if'' part, suppose that $G/\!\!\sim$ is isomorphic to an induced subgraph of $\Psi_{n,k}$.
Then each vertex $[v]$ of $G/\!\!\sim$ is assigned a subset $S_v$ of $[n]$ so that two distinct vertices $[v]$ and $[w]$ are adjacent in $G/\!\!\sim$ if and only if $|S_v \cup S_w|\leq k$.
If $|V(G/\!\!\sim)|=1$, then $G$ is complete and so $\Upsilon(G)=[n]$ by Proposition~\ref{prop:complete or empty}.
Thus, if $|V(G/\!\!\sim)|=1$, then $n-k \in \Upsilon(G)$.
Now suppose that $|V(G/\!\!\sim)| \geq 2$.
Since $G$ is connected by the hypothesis, $G/\!\!\sim$ is connected and so every vertex $[v]$ in $G/\!\!\sim$ is adjacent to a vertex, which implies $|S_v| \leq k$.
%If $G/\!\!\sim$ is an empty graph, then $\Upsilon(G)=[n]$ by Proposition~\ref{prop:complete or empty} and Corollary~\ref{cor:contraction_pComp}.
%Suppose that $G/\!\!\sim$ is not an empty graph and take a vertex $[v]$ in $G/\!\!\sim$.
%Since $G$ is connected, $G/\!\!\sim$ is connected and so $|S_v|\leq k$.
We denote by $M''$ the matrix with $n$ columns and with each row corresponds to a vertex of $G/\!\!\sim$ in such a way that $[n]\setminus\Lambda_{M''}([v])=S_v$.
Then it is easy to see that $G/\!\!\sim$ is an $(n-k)$-row graph of $M''$.
By Corollary~\ref{cor:contraction_pComp}, we can conclude that $n-k\in\Upsilon(G)$.
%Now we construct a matrix $M$ from $M'$ by duplicating each row of $M'$ as many times as the number of elements in the equivalence class corresponding to the row.
%Since  , each row of $M$ has at least $n-k$ $1$s.
%Now take two vertices $v$ and $w$ in $G$.
%Then $v$ and $w$ are adjacent in $G$ if and only if $[v]=[w]$ or $[v]$ and $[w]$ are adjacent in $G/\!\!\sim$.
%If $[v]=[w]$, then the rows of $M$ corresponding to $v$ and $w$ are identical and have at least $n-k$ common $1$s since $|S_u|\leq k$ for any vertex $[u]\in G/\!\!\sim$.
%If $[v]$ and $[w]$ are adjacent in $G/\!\!\sim$, then the rows in $M'$ corresponding to $[v]$ and $[w]$, respectively, have at least $n-k$ common $1$s and so do they in $M$.
%Hence $G$ is an $(n-k)$-row graph of $M$.
\end{proof}

\begin{Lem}\label{lem:induced star}
%Let $G$ be a graph with $n$ vertices.
%If $G$ is an induced subgraph of $K_{1,n}$, then $G$ is a $(n-1)$-competition graph.
%Any induced subgraph with $n$ vertices of $K_{1,n}$ is an $(n-1)$-competition graph.
The star graph $K_{1,n}$ is an $n$-competition graph for a positive integer $n$.
\end{Lem}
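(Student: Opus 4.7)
The plan is to invoke Theorem~\ref{thm:rowgraph} and exhibit a square $(0,1)$-matrix $M$ of order $n+1$ whose $n$-row graph is isomorphic to $K_{1,n}$. I would label the center of $K_{1,n}$ as $c$ and the leaves as $v_1,\ldots,v_n$, and index the rows (and columns) of $M$ by $\{0,1,\ldots,n\}$, with row $0$ corresponding to $c$ and row $i$ to $v_i$ for each $i\in\{1,\ldots,n\}$.

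For the construction, I would take row $0$ to be the all-ones vector and, for each $i\geq 1$, take row $i$ to have a single $0$ in column $i$ and a $1$ in every other column; equivalently, $M$ is obtained from $J_{n+1,n+1}$ by placing $0$s in the diagonal entries $(1,1),(2,2),\ldots,(n,n)$. The design is tuned so that $\Lambda_M(c)=\{0,1,\ldots,n\}$ while $\Lambda_M(v_i)=\{0,1,\ldots,n\}\setminus\{i\}$, so the center row agrees with each leaf row in exactly $n$ columns whereas two distinct leaf rows agree in exactly $n-1$ columns.

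The verification then reduces to two routine intersection counts. First, $|\Lambda_M(c)\cap\Lambda_M(v_i)|=n$, so $c$ and $v_i$ are adjacent in the $n$-row graph of $M$ for every $i$. Second, $|\Lambda_M(v_i)\cap\Lambda_M(v_j)|=n-1<n$ for distinct $i,j\in\{1,\ldots,n\}$, so no two leaves are adjacent. Hence the $n$-row graph of $M$ is exactly $K_{1,n}$, and the conclusion follows from Theorem~\ref{thm:rowgraph}.

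I do not foresee a genuine obstacle: the lemma is a small existence statement, and the explicit matrix above settles it directly. An equally workable variant is to delete the first column of $M$ and apply Proposition~\ref{prop:prow} to the resulting $(n+1)\times n$ matrix, whose $(n-1)$-row graph is $K_{1,n}$, thereby producing both $n-1$ and $n$ in $\Upsilon(K_{1,n})$; however, the square construction gives the shortest route to the stated lemma.
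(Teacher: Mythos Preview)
Your proposal is correct and essentially identical to the paper's proof: the paper exhibits the same $(n+1)\times(n+1)$ matrix with an all-ones row for the center and, for each leaf, a row with a single $0$ (so that leaf rows pairwise share only $n-1$ ones while each shares $n$ ones with the center), and concludes via Theorem~\ref{thm:rowgraph}. The only difference is cosmetic indexing; your explicit intersection counts make the verification the paper calls ``obvious.''
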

\begin{proof}
%Let $G$ be an induced subgraph with $n$ vertices of $K_{1, n}$.
%If $G$ is an empty graph, then, by Proposition~\ref{prop:complete or empty}, $G$ is an $(n-1)$-competition graph.
%Suppose that $G = K_{1, n-1}$.
It is obvious that $K_{1,n}$ is the $n$-row graph of the following square matrix of order $n+1$:
\[M=
\begin{bmatrix}
  1 & 1 & 1 & \cdots & 1 & 1 & 1 \\
  0 & 1 & 1 & \cdots & 1 & 1 & 1 \\
  1 & 0 & 1 & \cdots & 1 & 1 & 1 \\
  \vdots & \vdots & \vdots & \ddots & \vdots & \vdots & \vdots \\
  1 & 1 & 1 & \cdots & 0 & 1 & 1 \\
  1 & 1 & 1 & \cdots & 1 & 0 & 1
\end{bmatrix}.
\]
\end{proof}

The following proposition characterizes a graph $G$ with $n$ vertices and $n-1\in\Upsilon(G)$.

\begin{Prop}\label{prop:n-1}
Let $G$ be a graph with $n$ vertices. Then $G$ is an $(n-1)$-competition graph if and only if $G \cong (K_{n_0}\vee(K_{n_1}\cup\cdots\cup K_{n_k}))\cup \III_m$ for some nonnegative integers $k,n_0,n_1,\ldots,n_k$, and $m$ satisfying $m + \sum_{i=0}^k n_i = n$.
\end{Prop}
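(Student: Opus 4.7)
The plan is to invoke Theorem~\ref{thm:rowgraph} and work throughout with square $(0,1)$-matrices of order $n$ whose $(n-1)$-row graph is $G$. The key elementary observation driving both directions is that for two rows $\mathbf{r}_u,\mathbf{r}_v$ of length $n$ to share $1$s in at least $n-1$ columns, each must individually have at least $n-1$ ones, since $|\Lambda_M(u)\cap\Lambda_M(v)|\le\min(|\Lambda_M(u)|,|\Lambda_M(v)|)$. So I would split every row into three types according to the size of $\Lambda_M(\cdot)$: Type~A ($\Lambda_M(v)=[n]$), Type~B (exactly $n-1$ ones, with a unique missing index $i_v\in[n]$), and Type~C (at most $n-2$ ones).

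For the ``only if'' direction, let $M$ be a square $(0,1)$-matrix of order $n$ with $(n-1)$-row graph $G$, and carry out the adjacency computation for each combination of types: two Type~A rows share $n$ ones (adjacent); a Type~A and a Type~B row share $n-1$ ones (adjacent); two Type~B rows share $n-1$ ones if and only if they have the same missing index, and share $n-2$ ones otherwise; any Type~C row shares at most $n-2$ ones with every other row, so the corresponding vertex is isolated. Setting $n_0$ to be the number of Type~A rows, $n_1,\dots,n_k$ to be the sizes of the Type~B classes grouped by common missing index (taking only the nonempty classes), and $m$ to be the number of Type~C rows, this adjacency description coincides exactly with the claimed form $(K_{n_0}\vee(K_{n_1}\cup\cdots\cup K_{n_k}))\cup\III_m$ with $m+\sum_{i=0}^k n_i = n$.

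For the ``if'' direction, I would reverse the construction. Given $G$ of the stated form, note that $k\le\sum_{i=1}^k n_i\le n$, so we can choose distinct indices $j_1,\dots,j_k\in[n]$. Define an $n\times n$ matrix $M$ whose rows are: $n_0$ all-ones rows (one per vertex of $K_{n_0}$); for each $i\in[k]$, $n_i$ copies of the row $\mathbf{1}-\mathbf{e}_{j_i}$ (one per vertex of $K_{n_i}$); and $m$ all-zero rows (one per isolated vertex). The same case analysis as in the other direction shows that the $(n-1)$-row graph of $M$ equals $G$, and Theorem~\ref{thm:rowgraph} then gives $n-1\in\Upsilon(G)$.

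The only real obstacle is bookkeeping in the degenerate cases $n_0=0$, $k=0$, or $m=0$, which the paper's conventions $K_0=\emptyset$ and $\III_0=\emptyset$ render painless; beyond that, the argument reduces entirely to direct intersection counts among rows of prescribed weight.
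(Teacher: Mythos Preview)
Your proof is correct. The ``only if'' direction is essentially identical to the paper's: both classify rows by weight into three types (all ones, exactly $n-1$ ones, at most $n-2$ ones), observe that Type~C rows give isolated vertices, and read off the clique structure from the missing index of the Type~B rows.

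The ``if'' direction, however, differs. You give a direct one-shot matrix construction (all-ones rows for $K_{n_0}$, rows $\mathbf{1}-\mathbf{e}_{j_i}$ for each $K_{n_i}$, zero rows for $\III_m$) and verify the $(n-1)$-row graph by the same intersection count. The paper instead routes the argument through its condensation machinery: it strips isolated vertices to get $G'$, observes that $G'/\!\!\sim$ is either empty or a star, invokes Lemma~\ref{lem:induced star} ($K_{1,t}$ is a $t$-competition graph) together with Corollary~\ref{cor:contraction_pComp} to push the result back up to $G'$, and finally reattaches isolated vertices via Proposition~\ref{prop:add isolated}. Your approach is more elementary and self-contained; the paper's approach showcases the $p$-row/condensation framework that is the paper's main theme. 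One small point: your inequality $k\le\sum_{i=1}^k n_i$ tacitly assumes each $n_i\ge 1$ for $i\ge 1$, which is harmless since $K_0=\emptyset$ lets you discard any $n_i=0$ terms without loss of generality, but is worth stating explicitly.
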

\begin{proof}
We show the ``if'' part.
If $G$ is empty, then, by Proposition~\ref{prop:complete or empty}, $G$ is an $(n-1)$-competition graph.
Now suppose that $G$ is a nonempty graph.
Let $G'$ be the subgraph of $G$ resulting from deleting all the isolated vertices in $G$.
%Then $G' := G[E(G)]$ is an induced subgraph of $G$ with no isolated vertex.
It is easy to check that $G' /\!\!\sim$ is an empty graph or a star graph by the hypothesis.
% induced subgraph of $K_{1, |V(G' /\!\!\sim)|}$.
Suppose that $|V(G'/\!\!\sim)|=1$.
Then $G'$ is a complete graph and so $\Upsilon(G')=\{1,\ldots,|V(G')|\}$.
By Proposition~\ref{prop:add isolated}, $\Upsilon(G)=[n]$ and so $G$ is an $(n-1)$-competition graph.
Now suppose that $|V(G'/\!\!\sim)| \geq 2$.
If $G'/\!\!\sim$ is an empty graph, then $G'$ is a disjoint union of complete graphs and so,
by Corollary~\ref{cor:union_of_complete} and Proposition~\ref{prop:add isolated}, $G$ is an $(n-1)$-competition graph.
Suppose that $G'/\!\!\sim$ is not an empty graph.
Then, by Proposition~\ref{prop:complete or empty} and Lemma~\ref{lem:induced star}, $|V(G'/\!\!\sim)|-1\in\Upsilon(G'/\!\!\sim)$.
Then $|V(G')|-1\in\Upsilon(G')$ by Corollary~\ref{cor:contraction_pComp}.
By Proposition~\ref{prop:add isolated}, $G$ is an $(n-1)$-competition graph.

Now we show the ``only if'' part.
Suppose that $G$ is an $(n-1)$-competition graph.
Then $G$ is isomorphic to the $(n-1)$-row graph of a matrix $M$ of order $n$ by Theorem~\ref{thm:rowgraph}.
Suppose that there exists a row, say $\mathbf{r}$, of $M$ such that $\mathbf{r}$ contains at most $n-2$ $1$s.
Then the vertex in $G$ corresponding to $\mathbf{r}$ is an isolated vertex,
so $G$ is still the $(n-1)$-row graph of the matrix resulting from replacing $\mathbf{r}$ with all-zero row.
Therefore we may assume that $M$ contains the rows of exactly three types:
\begin{itemize}
  \item[1.] the row with $n$ $1$s;
  \item[2.] the row with $n-1$ $1$s;
  \item[3.] the row with $0$ $1$s.
\end{itemize}
Let $V_1$, $V_2$, and $V_3$ be the sets of vertices corresponding to rows of Type i for each $i=1,2,3$.
Then the vertex set of $G$ is partitioned into $V_1$, $V_2$, and $V_3$.
Obviously, each vertex in $V_1$ is adjacent to each vertex of $G$ not in $V_3$ and each vertex in $V_3$ is isolated.
We note that two rows of Type 2 are identical if and only if their corresponding vertices in $V_2$ are adjacent in $G$.
Therefore each vertex in $V_2$ is a simplicial vertex, that is, a vertex whose neighbors form a clique.
%Now the vertex set $V$ of $G$ can be partitioned into three subsets $V_1$, $V_2$, and $V_3$ such that
%$V_i$ is the set of vertices corresponding to rows of Type i for $i=1,2,3$.
Let $n_0 = |V_1|$ and $m = |V_3|$.
If $V_2 = \emptyset$, then $G \cong K_{n_0} \cup \III_m$ by the above observation.
Now suppose that $V_2\neq\emptyset$.
Then the subgraph of $G$ induced by $V_2$ is a disjoint union of cliques.
Let $W_1,W_2,\ldots,W_k$ be the vertex sets of those cliques and let $|W_i|=n_i$ for $1 \le i \le k$. Then $m + \sum_{i=0}^k n_i = n$.
By the above observation again, $G \cong (K_{n_0}\vee(K_{n_1}\cup\cdots\cup K_{n_k}))\cup \III_m$.
\end{proof}

\begin{Thm}
Let $G$ be a graph with $n$ vertices.
Then $\Upsilon(G)=[n-1]$ if and only if $G\cong H\cup \III_m$ for some integer $m$, $0\leq m \leq n-3$ and some graph $H$ for which $H/\!\!\sim$ is an induced subgraph of a star graph and has more than one vertex.
\end{Thm}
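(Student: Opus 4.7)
The plan is to prove the two directions separately, with Proposition~\ref{prop:n-1} as the structural backbone and an explicit matrix construction driving the harder direction. Throughout, I take $H$ to be the subgraph of $G$ induced by its non-isolated vertices, so $H$ itself has no isolated vertices; this is the natural canonical choice among the decompositions $G \cong H \cup \III_m$.

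For $(\Leftarrow)$, suppose such a decomposition exists. An induced subgraph of a star is either a star $K_{1,s}$ or an edgeless graph $\III_s$, and $H/\!\!\sim \cong K_{1,1}$ is impossible since any two homogeneous vertices are adjacent, which would collapse $H$ to a single clique and leave $H/\!\!\sim$ with only one vertex. So either $H/\!\!\sim \cong K_{1,s}$ with $s \ge 2$, in which case $H \cong K_{n_0} \vee (K_{n_1} \cup \cdots \cup K_{n_s})$ with all $n_i \ge 1$ (Case~1), or $H/\!\!\sim \cong \III_s$ with $s \ge 2$, which (since $H$ has no isolated vertices) forces each class to be a clique component of size $\ge 2$, i.e., $H \cong K_{n_1} \cup \cdots \cup K_{n_s}$ with $n_i \ge 2$ (Case~2).

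In Case~2, $G = H \cup \III_m$ is a disjoint union of complete graphs (viewing $\III_m$ as singleton cliques), so Corollary~\ref{cor:union_of_complete} yields $\Upsilon(G) \supset [n-1]$. In Case~1, I show $\Upsilon(H) \supset [N-1]$ directly for $N = |V(H)|$: for each $p \in [N-1]$, build an $N \times N$ matrix $M_p$ by assigning the all-ones row to every vertex of the central class and, to each leaf class, a row whose support is a distinct $p$-element subset of $[N]$. Such a choice of $s$ distinct $p$-subsets exists because $\binom{N}{p} \ge N \ge s+1$ for $p \in [N-1]$, by unimodality of the binomial coefficients. The intersections then produce exactly the right adjacencies: center-vs-leaf rows share $p$ ones, two leaf rows from distinct classes share at most $p-1$ ones, and rows inside a common class are identical (with supports of size $p$ or $N$). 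So the $p$-row graph of $M_p$ is $H$, Theorem~\ref{thm:rowgraph} places $p$ in $\Upsilon(H)$, and Proposition~\ref{prop:add isolated} then propagates this to $\Upsilon(G) \supset [n-1]$. In either case $H$ is not complete (two leaves are non-adjacent in Case~1, and vertices in distinct cliques of size $\ge 2$ are non-adjacent in Case~2), so $G \not\cong K_l \cup \III_{n-l}$; Proposition~\ref{prop:complete} then gives $n \notin \Upsilon(G)$, completing $\Upsilon(G) = [n-1]$.

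For $(\Rightarrow)$, assume $\Upsilon(G) = [n-1]$. By Proposition~\ref{prop:n-1} applied to $n-1 \in \Upsilon(G)$, write $G \cong (K_{n_0} \vee (K_{n_1} \cup \cdots \cup K_{n_k})) \cup \III_{m'}$, and after discarding trivial summands assume $n_i \ge 1$ for $i \ge 1$. Since $n \notin \Upsilon(G)$, Proposition~\ref{prop:complete} forbids $G \cong K_l \cup \III_{n-l}$: when $n_0 \ge 1$, this forces $k \ge 2$ (else $G = K_{n_0+n_1} \cup \III_{m'}$), and taking $H := K_{n_0} \vee (K_{n_1} \cup \cdots \cup K_{n_k})$ gives $H/\!\!\sim \cong K_{1,k}$; when $n_0 = 0$, it forces at least two $n_i \ge 2$ (else the $K_1$'s merge into $\III_{m'}$ to give $K_{n_i}\cup\III$), and letting $H$ be the union of those $K_{n_i}$ with $n_i \ge 2$ (absorbing the remaining $K_1$'s into $\III_m$) gives $H/\!\!\sim \cong \III_t$ with $t \ge 2$. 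Either way $H/\!\!\sim$ is an induced subgraph of a star with more than one vertex, $|V(H)| \ge 3$, and $m \le n-3$. The main obstacle I anticipate is the uniform matrix construction in Case~1 of $(\Leftarrow)$: one needs a single combinatorial recipe yielding a $p$-row-graph representation of $H$ for every $p$ in $[N-1]$ simultaneously, and the numerical linchpin is the binomial inequality $\binom{N}{p} \ge N$ on this range, which guarantees enough distinct $p$-subsets to separate the $s$ leaf classes.
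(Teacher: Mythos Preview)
Your proof is correct and follows essentially the same approach as the paper: both directions hinge on Proposition~\ref{prop:n-1} for structure, on Proposition~\ref{prop:complete} to exclude $n$, and on the matrix construction with an all-ones row for the center class and mutually distinct $p$-element supports for the leaf classes (justified by $\binom{N}{p}\ge N$ on $[N-1]$). The only cosmetic differences are that the paper builds the matrix at the level of $H/{\sim}$ and then lifts via Proposition~\ref{prop:contraction}, whereas you build it directly for $H$ by repeating rows within each equivalence class, and you split off the edgeless-$H/{\sim}$ case via Corollary~\ref{cor:union_of_complete} rather than folding it into the same construction.
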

\begin{proof}
To show the ``if'' part, suppose that $G\cong H\cup \III_m$ for some integer $m$, $0\leq m \leq n-3$ and some graph $H$ for which $H/\!\!\sim$ is an induced subgraph of a star graph $Q$ with more than one vertex.
We denote the number of vertices in $H/\!\!\sim$ by $t$.
Take $p\in[t-1]$.
We construct a square $(0,1)$-matrix $M$ of order $t$ in the following way.
If $H/\!\!\sim$ contains a center of $Q$, then the row of $M$ corresponding to it is the all-one vector.
The rows of $M$ corresponding to the vertices in $H/\!\!\sim$ which are not a center of $Q$ are mutually distinct, and the number of $1$s in each of them is $p$.
Such a matrix $M$ exists since ${t \choose p} \geq t$.
It is easy to check that $H/\!\!\sim$ is isomorphic to the $p$-row graph of $M$.
Thus $[t-1] \subset \Upsilon(H/\!\!\sim)$ by Theorem~\ref{thm:rowgraph}.
By Proposition~\ref{prop:contraction}, $[n-m-1] \subset \Upsilon(H)$.
Now, by Proposition~\ref{prop:add isolated}, $[n-1]\subset \Upsilon(G)$.
By Proposition~\ref{prop:complete}, $n\notin\Upsilon(G)$ and so $\Upsilon(G)=[n-1]$.

To show the ``only if" part, suppose that
$\Upsilon(G)=[n-1]$.
Then, by Proposition~\ref{prop:n-1}, $G \cong (K_{n_0}\vee(K_{n_1}\cup\cdots\cup K_{n_k}))\cup \III_m$ for some nonnegative integers $k,n_0,n_1,\ldots,n_k$, and $m$ satisfying $m + \sum_{i=0}^k n_i = n$.
If there is at most one nonzero integer among $n_1,n_2,\ldots,n_k$, then $G\cong K_{n-l_1} \cup \III_{l_1}$ for some integer $l_1$, $0\leq l_1 \leq n$ and so, by Corollary~\ref{cor:upsilon_n}, $\Upsilon(G)=[n]$, which is a contradiction.
Therefore there are at least two nonzero integers among $n_1,n_2,\ldots,n_k$ and $H:=K_{n_0}\vee (K_{n_1}\cup\cdots\cup K_{n_k})$ is an induced subgraph of $G$.
It is easy to check that $H/\!\!\sim$ is an induced subgraph of a star graph and has more than one vertex.
Finally we show that $m \leq n-3$.
Suppose that there are exactly two nonzero integers among $n_0,n_1,n_2,\ldots,n_k$.
If $n_0$ and $n_i$ are nonzero integers for some $i$, $1\leq i \leq k$, then $G\cong K_{n_0+n_i} \cup \III_{l_2}$ for some integer $l_2$, $0 \leq l_2 \leq n$ and we reach a contradiction as before.
If $n_i$ and $n_j$ are nonzero integers for some $i$ and $j$, $1\leq i < j \leq k$, then $G \cong K_{n_i} \cup K_{n_j} \cup \III_{l_3}$ for some integer $l_3$, $0 \leq l_3 \leq n$, which implies $n_i \geq 2$ and $n_j \geq 2$.
Therefore $m \leq n-4$ if there are exactly two nonzero integers among $n_0,n_1,n_2,\ldots,n_k$.
If there are at least three nonzero integers among $n_0,n_1,n_2,\ldots,n_k$, then it is obvious that $m\leq n-3$.
\end{proof}

A \emph{hole} of a graph is a cycle of length greater than or equal to $4$ which is an induced subgraph of the graph.
A graph without holes is said to be \emph{chordal}.

For $p=n$ or $n-1$, a $p$-competition graph is a chordal graph by Propositions~\ref{prop:complete} and \ref{prop:n-1}. As a matter of fact, an $(n-2)$-competition graph is also chordal by Corollary~\ref{cor:hole}.

An \emph{induced path} of a graph means a path as an induced subgraph of the graph.

% Theorem: not chordal
\begin{Thm}\label{prop:n-3}
If a graph $G$ with $n$ vertices  contains two internally disjoint induced paths of length $2$ (whose internal vertices are nonadjacent), then $\Upsilon(G) \subset [n-3]$.
\end{Thm}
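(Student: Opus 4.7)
The plan is to rule out the three largest candidate values $n$, $n-1$, and $n-2$ from $\Upsilon(G)$ one at a time, using the structural characterizations already established and the counting estimate of Proposition~\ref{prop:lambda}.

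For $n \in \Upsilon(G)$: Proposition~\ref{prop:complete} would force $G \cong K_m \cup \III_{n-m}$, in which every non-isolated vertex has a clique neighborhood, so $G$ would contain no induced $P_3$ at all, contradicting the hypothesis. For $n-1 \in \Upsilon(G)$: Proposition~\ref{prop:n-1} would force $G \cong (K_{n_0}\vee(K_{n_1}\cup\cdots\cup K_{n_k}))\cup \III_m$. In this graph, a nonedge between non-isolated vertices can only join two vertices in distinct cliques $K_{n_i}$ and $K_{n_j}$ with $i,j\geq 1$ and $i\neq j$, and the common neighbors of such a pair are exactly the vertices of $K_{n_0}$. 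Consequently the middle vertex of every induced $P_3$ lies in $K_{n_0}$, and since $K_{n_0}$ is a clique any two middle vertices are adjacent --- contradicting the existence of two induced $P_3$'s with nonadjacent internal vertices.

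The interesting case is $n-2 \in \Upsilon(G)$. I would argue by contradiction: by Theorem~\ref{thm:rowgraph}, $G$ would be isomorphic to the $(n-2)$-row graph of a square $(0,1)$-matrix $M$ of order $n$. Let $u_1v_1w_1$ and $u_2v_2w_2$ be the two given internally disjoint induced paths of length $2$, with $v_1v_2\notin E(G)$. Each $v_i$ is non-isolated and non-simplicial, since its neighbors $u_i$ and $w_i$ are nonadjacent. Applying Proposition~\ref{prop:lambda} with $p = n-2$ gives $|\Lambda_M(v_i)| \geq n-1$ for $i=1,2$. Because $\Lambda_M(v_1), \Lambda_M(v_2) \subset [n]$, inclusion--exclusion yields
\[
|\Lambda_M(v_1) \cap \Lambda_M(v_2)| \;\geq\; |\Lambda_M(v_1)| + |\Lambda_M(v_2)| - n \;\geq\; 2(n-1) - n \;=\; n-2.
\]
By the definition of the $(n-2)$-row graph, $v_1$ and $v_2$ are then adjacent in $G$, contradicting $v_1v_2 \notin E(G)$.

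The main obstacle is not computational but structural: one has to be sure that the hypothesized pair of induced $P_3$'s really is incompatible with the descriptions in Propositions~\ref{prop:complete} and~\ref{prop:n-1}, and the $n-1$ case requires the small observation that the middle vertex of any induced $P_3$ in a graph of the form $(K_{n_0}\vee(K_{n_1}\cup\cdots\cup K_{n_k}))\cup \III_m$ must sit in the universal clique $K_{n_0}$. Once that is in place, the analytic core of the argument --- the $n-2$ case --- is a one-line application of Proposition~\ref{prop:lambda} and elementary set counting.
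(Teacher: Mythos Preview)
Your proof is correct and follows the same overall strategy as the paper: dispose of $n$ and $n-1$ via Propositions~\ref{prop:complete} and~\ref{prop:n-1}, then derive a contradiction from $n-2\in\Upsilon(G)$ by a direct matrix argument. Your handling of the $n-2$ case is in fact slightly cleaner than the paper's: you invoke Proposition~\ref{prop:lambda} to get $|\Lambda_M(v_i)|\ge n-1$ for both internal vertices at once and immediately conclude $v_1v_2\in E(G)$, whereas the paper does a small case split (either both $|\Lambda_M(v)|,|\Lambda_M(y)|\ge n-1$, forcing $vy\in E(G)$, or one of them equals $n-2$, forcing the endpoints of that $P_3$ to be adjacent) without citing Proposition~\ref{prop:lambda}.
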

\begin{proof}
Let $G$ be a graph with $n$ vertices containing two internally disjoint induced paths $uvw$ and $xyz$ of length $2$ with $v$ and $y$ nonadjacent (see Figure~\ref{fig:chordal}).
Then, by Propositions~\ref{prop:complete} and  \ref{prop:n-1}, $\Upsilon(G) \subset [n-2]$.
Suppose, to the contrary, that $n-2 \in \Upsilon(G)$.
By Theorem~\ref{thm:rowgraph}, $G$ is isomorphic to the $(n-2)$-row graph of a square matrix $M$ of order $n$.
If $|\Lambda_M(v)|\geq n-1$ and $|\Lambda_M(y)|\geq n-1$, then $|\Lambda_M(v)\cap \Lambda_M(y)|\geq n-2$ and so $v$ and $y$ are adjacent, which is impossible.
Thus $|\Lambda_M(v)|\leq n-2$ or $|\Lambda_M(y)|\leq n-2$.
Without loss of generality, we may assume that $|\Lambda_M(v)|\leq n-2$.
Since $v$ is non-isolated, $|\Lambda_M(v)|=n-2$.
Since $u$ and $v$ (resp.\ $w$ and $v$) are adjacent, $|\Lambda_M(u)\cap \Lambda_M(v)|\geq n-2$ (resp.\ $|\Lambda_M(w)\cap \Lambda_M(v)|\geq n-2$).
Since $|\Lambda_M(v)|=n-2$, $\Lambda_M(v)\subset \Lambda_M(u)$ and $\Lambda_M(v)\subset \Lambda_M(w)$.
Therefore $\Lambda_M(v)\subset\Lambda_M(u)\cap \Lambda_M(w)$ and so $|\Lambda_M(u)\cap \Lambda_M(w)|\geq n-2$.
Then $u$ and $w$ are adjacent in $G$ and we reach a contradiction.
\end{proof}
\begin{figure}
\centering
\begin{tikzpicture}[every node/.style={transform shape},scale=.8]
    \tikzset{mynode/.style={inner sep=2.3pt,fill,outer sep=0,circle}}

    \node[mynode] (x) at (-2,-1) {};
    \node[mynode] (y) at (0,-1) {};
    \node[mynode] (z) at (2,-1) {};
    \node[mynode] (u) at (-2,1) {};
    \node[mynode] (v) at (0,1) {};
    \node[mynode] (w) at (2,1) {};

    \draw (x) -- (y) -- (z);
    \draw (u) -- (v) -- (w);
    \draw[dashed] (v) -- (y);
    \draw[dashed] (x) edge[bend right] (z);
    \draw[dashed] (u) edge[bend left] (w);

    \draw[decorate,decoration={raise=3pt}] (u) node[left=3pt] {$u$};
    \draw[decorate,decoration={raise=3pt}] (v) node[above=3pt] {$v$};
    \draw[decorate,decoration={raise=3pt}] (w) node[right=3pt] {$w$};
    \draw[decorate,decoration={raise=3pt}] (x) node[left=3pt] {$x$};
    \draw[decorate,decoration={raise=3pt}] (y) node[below=3pt] {$y$};
    \draw[decorate,decoration={raise=3pt}] (z) node[right=3pt] {$z$};
\end{tikzpicture}
\caption{Paths $uvw$ and $xyz$ given in the proof of Theorem~\ref{prop:n-3}. The dotted line between two vertices means that they are not adjacent.}\label{fig:chordal}
\end{figure}
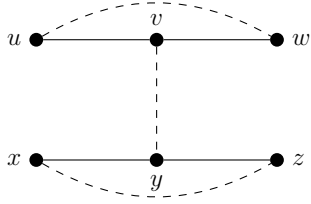
Since a graph with the diameter at least four has two internally disjoint induced paths of length $2$, the following corollary is immediately true.
\begin{Cor}\label{cor:n-3}
If the diameter of a graph $G$ is at least four, then $\Upsilon(G)\subset [n-3]$.
\end{Cor}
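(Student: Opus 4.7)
The plan is to reduce the corollary to Theorem~\ref{prop:n-3} by producing, from the hypothesis on diameter, two internally disjoint induced paths of length $2$ whose internal vertices are nonadjacent. The key observation is that any shortest path in a graph is automatically an induced path, so long shortest paths produce many short induced subpaths with good distance properties among their vertices.

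More precisely, the first step is to pick vertices $x,y \in V(G)$ with $d_G(x,y) \geq 4$, which exist by the assumption on the diameter. Next, I would fix a shortest $(x,y)$-path
\[
P:\ x = w_0 w_1 w_2 w_3 w_4 \cdots w_d = y,
\]
so $d \ge 4$, and take the two subpaths $P_1 = w_0 w_1 w_2$ and $P_2 = w_2 w_3 w_4$, each of length $2$.

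Then I would verify the three hypotheses of Theorem~\ref{prop:n-3}. Since every shortest path is induced, every subpath of $P$ is induced, so $P_1$ and $P_2$ are induced paths of length $2$. They share only the endpoint $w_2$ and no internal vertex, so they are internally disjoint in the standard sense. Finally, the internal vertices $w_1$ and $w_3$ satisfy $d_G(w_1, w_3) = 2$ because $P$ is shortest (otherwise one could shortcut $P$), so $w_1$ and $w_3$ are nonadjacent. With these three facts in hand, Theorem~\ref{prop:n-3} applies and gives $\Upsilon(G) \subset [n-3]$.

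There is essentially no obstacle here; the statement is a direct corollary, and the only bookkeeping is to confirm that the paths $P_1, P_2$ fit the exact wording of Theorem~\ref{prop:n-3}. In particular, one should note that ``internally disjoint'' allows $P_1$ and $P_2$ to share the endpoint $w_2$, which is why a single shortest path of length $4$ already suffices rather than requiring six distinct vertices.
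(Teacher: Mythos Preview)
Your proposal is correct and follows essentially the same approach as the paper: the paper's justification is the single sentence ``Since a graph with the diameter at least four has two internally disjoint induced paths of length $2$, the following corollary is immediately true,'' and your argument simply makes this explicit by extracting the two length-$2$ subpaths $w_0w_1w_2$ and $w_2w_3w_4$ from a shortest path of length at least four.
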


\begin{Cor}\label{cor:hole}
%If a graph $G$ with $n\geq 4$ vertices is non-chordal or has an induced path of length $4$, then $\Upsilon(G) \subset [n-3]$
If $\Upsilon(G)\not\subset [n-3]$ for a graph $G$ with $n\geq 4$ vertices, then $G$ is chordal and has no induced path of length $4$.
\end{Cor}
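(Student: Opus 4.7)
The plan is to prove the contrapositive: if $G$ contains an induced cycle of length at least $4$ or an induced path of length $4$, then $\Upsilon(G)\subset[n-3]$. In either situation the strategy is to exhibit two internally disjoint induced paths of length $2$ whose internal vertices are nonadjacent, so that Theorem~\ref{prop:n-3} applies directly.

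First, suppose $G$ contains an induced path $v_1v_2v_3v_4v_5$ of length $4$. I would take the sub-paths $v_1v_2v_3$ and $v_3v_4v_5$. Each is an induced path of length $2$ in $G$, since $v_1v_3$ and $v_3v_5$ are non-edges (as the $P_5$ is induced). They share only the endpoint $v_3$, so they are internally disjoint, and their internal vertices $v_2$ and $v_4$ are nonadjacent, again because the $P_5$ is induced. Theorem~\ref{prop:n-3} then yields $\Upsilon(G)\subset[n-3]$.

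Second, suppose $G$ contains an induced cycle $C=v_1v_2\cdots v_kv_1$ with $k\geq 4$. I would choose the two sub-paths of $C$ of length $2$ as follows: $v_1v_2v_3$ and $v_3v_4v_1$ when $k=4$; $v_1v_2v_3$ and $v_3v_4v_5$ when $k=5$; and $v_1v_2v_3$ and $v_4v_5v_6$ when $k\geq 6$. In each case the two paths are induced (being sub-paths of an induced cycle of length at least $4$), they are internally disjoint, and their internal vertices are distinct and nonadjacent in $G$ (since $C$ is induced). Hence Theorem~\ref{prop:n-3} again gives $\Upsilon(G)\subset[n-3]$.

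There is no serious obstacle; the only mild subtlety is the case $k=4$, where the two length-$2$ paths happen to share both endpoints $v_1$ and $v_3$, but this is still fine because internal disjointness only forbids common \emph{internal} vertices. Combining the two cases completes the proof by contrapositive.
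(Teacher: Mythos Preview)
Your proof is correct and follows essentially the same approach as the paper: both argue by contrapositive and reduce to Theorem~\ref{prop:n-3} by exhibiting two internally disjoint induced paths of length $2$ with nonadjacent internal vertices. The paper simply asserts that a hole or an induced $P_5$ contains such a configuration, whereas you spell out the explicit choices in each case; your observation about the $k=4$ case (shared endpoints being permissible) is accurate, since the proof of Theorem~\ref{prop:n-3} uses only that the two internal vertices are distinct and nonadjacent and that each path is induced.
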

\begin{proof}
We will show that the contrapositive of the statement is true.
If a $p$-competition graph with $n$ vertices is non-chordal or has an induced path of length $4$ for integers $n\geq 4$ and $p\in[n]$, then it contains two internally disjoint induced paths of length $2$ whose internal vertices are nonadjacent and the statement is true by Theorem~\ref{prop:n-3}.
\end{proof}

By Corollary~\ref{cor:K_n}, it is trivially true that if a connected graph $G$ with $n$ vertices is an $n$-competition graph, then the diameter of $G$ is $1$. By Proposition~\ref{prop:n-1}, the diameter of a connected $(n-1)$-competition graph which has $n$ vertices is at most $2$. The diameter of a connected $(n-2)$-competition graph which has $n$ vertices is at most $3$ by Corollary~\ref{cor:hole}. However, interestingly, the diameter of a connected $(n-3)$-competition graph with $n$ vertices can be arbitrarily large, which will be shown by Proposition~\ref{lem:path}.

\begin{Prop}\label{prop:theta}
For a graph $G$ with $\theta_e(G) \le |V(G)|$, $[|V(G)|-\theta_e(G)+1]\subset \Upsilon(G)$.
\end{Prop}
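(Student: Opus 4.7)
The plan is to reduce the claim to a direct application of Proposition~\ref{prop:prow}. Let $n=|V(G)|$, $r=\theta_e(G)$, and let $\{C_1,\dots,C_r\}$ be a minimum edge clique cover of $G$. Enumerate the vertices as $v_1,\dots,v_n$ and define an $n\times r$ $(0,1)$-matrix $M=(a_{ij})$ by
\[
a_{ij}=\begin{cases}1 & \text{if } v_i\in C_j,\\ 0 & \text{otherwise.}\end{cases}
\]

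Next I would verify that $G$ is the $1$-row graph of $M$. For distinct $i,k$, rows $i$ and $k$ share a common $1$ in some column $j$ if and only if $v_i,v_k\in C_j$. Since each $C_j$ is a clique of $G$, such sharing forces $v_iv_k\in E(G)$. Conversely, if $v_iv_k\in E(G)$, then by the definition of an edge clique cover there exists $j$ with $\{v_i,v_k\}\subset C_j$, so rows $i$ and $k$ share a $1$ in column $j$. Hence $G$ is isomorphic to the $1$-row graph of $M$.

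Finally, since $r=\theta_e(G)\le n$ by hypothesis, Proposition~\ref{prop:prow} applied with $p=1$ and $m=r$ yields
\[
\Upsilon(G)\supset\{1+i\mid i\in[n-r]\cup\{0\}\}=[n-\theta_e(G)+1],
\]
which is exactly what we want.

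There is essentially no hard step here; the only thing to be attentive to is that the matrix $M$ has $r$ columns rather than $n$, and that $r\le n$ is precisely the hypothesis needed so that Proposition~\ref{prop:prow} is applicable. No ad hoc padding or matrix surgery is required, because Proposition~\ref{prop:prow} itself absorbs the extension from an $n\times r$ matrix to a square matrix of order $n$.
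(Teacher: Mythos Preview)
Your proof is correct and follows essentially the same approach as the paper: build the incidence matrix of a minimum edge clique cover, observe that $G$ is its $1$-row graph, and invoke Proposition~\ref{prop:prow}. You even give a slightly more detailed justification of the $1$-row graph claim than the paper does.
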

\begin{proof}
Let $|V(G)|=n$ and $V(G)=\{v_1, v_2, \ldots, v_n\}$.
There is an edge clique cover $\CCC:=\{C_1, C_2, \ldots, C_{\theta_e(G)} \}$ of $G$ as $\theta_e(G)$ is the edge clique number of $G$.
We define an $n \times \theta_e(G)$ matrix $M=(m_{ij})$ as follows:
\[
m_{ij} = %
\begin{cases}
    1& \mbox{if } v_i \in C_j   \\
    0& \mbox{if } v_i \notin C_j.
\end{cases}
\]
Then $G$ is isomorphic to the $1$-row graph of $M$.
Therefore the statement is true by Proposition~\ref{prop:prow}.
%Then it is easy to check that two rows of $M$ have $n-\theta_e(G)$ common $1$'s in the $\theta_e(G)+1$st component through the $n$th component.
%
%Take two vertices some $v_k$ and $v_l$ for some $\{k, l\} \subset [n]$.
%Suppose $v_k$ and $v_l$ are adjacent in $G$.
%Then $v_k$ and $v_l$ belong to $C_j$ for some $j \in [\theta_e(G)]$, so $j \in \Lambda_M(v_k) \cap \Lambda_M(v_l)$. Thus $1+(n-\theta_e(G)) \le |\Lambda_M(v_k) \cap \Lambda_M(v_l)|$.
%If $v_k$ and $v_l$ are not adjacent in $G$, then $s \not\in \Lambda_M(v_k) \cap \Lambda_M(v_l)$ for any $1 \le s \le \theta_e(G)$ and so $|\Lambda_M(v_k) \cap \Lambda_M(v_l)|= n-\theta_e(G)$.
%Thus $G$ is isomorphic to the $\left(n - \theta_e(G) + 1 \right)$-row graph of $M$.
\end{proof}
It is a well-known fact that any graph $G$ can be made into the competition graph of an acyclic digraph as long as it is allowed to add new isolated vertices to $G$.
The smallest among such numbers is called the \emph{competition number} of $G$ and denoted by $k(G)$.
Opsut~\cite{opsut1982computation} showed that
\begin{equation}\label{eqn:competition}
k(G) \ge \theta_e(G)-|V(G)|+2.
\end{equation}

\begin{Cor}\label{cor:omega}
Let $G$ be a graph with $\omega$ components.
If each component of $G$ has competition number one, then $[\omega+1]\subset\Upsilon(G)$.
\end{Cor}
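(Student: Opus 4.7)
The plan is to combine Opsut's lower bound \eqref{eqn:competition} on the competition number with Proposition~\ref{prop:theta} by first bounding $\theta_e(G)$ from above.

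First I would analyze each component separately. Fix a component $C$ of $G$ with $k(C)=1$. Applying Opsut's inequality to $C$ gives
\[
1 = k(C) \;\ge\; \theta_e(C)-|V(C)|+2,
\]
so $\theta_e(C)\le |V(C)|-1$. Next I would assemble these bounds for the whole graph. Since the edges and cliques of $G$ are each confined to a single component, a minimum edge clique cover of $G$ is obtained by taking a minimum edge clique cover of each component, hence $\theta_e(G)=\sum_{i=1}^{\omega}\theta_e(C_i)$ where $C_1,\ldots,C_\omega$ are the components of $G$. Summing the per-component estimates yields
\[
\theta_e(G)\;=\;\sum_{i=1}^{\omega}\theta_e(C_i)\;\le\;\sum_{i=1}^{\omega}\bigl(|V(C_i)|-1\bigr)\;=\;|V(G)|-\omega.
\]

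In particular $\theta_e(G)\le |V(G)|$, so Proposition~\ref{prop:theta} applies and gives
\[
\bigl[\,|V(G)|-\theta_e(G)+1\,\bigr]\;\subset\;\Upsilon(G).
\]
From the bound $\theta_e(G)\le |V(G)|-\omega$ we get $|V(G)|-\theta_e(G)+1\ge \omega+1$, so $[\omega+1]\subset[\,|V(G)|-\theta_e(G)+1\,]\subset\Upsilon(G)$, as desired.

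There is no real obstacle here: the only point that deserves a brief justification is the additivity $\theta_e(G)=\sum_i\theta_e(C_i)$ over components, which is immediate because every clique is connected and therefore sits inside a single component. Everything else is a direct chain of inequalities feeding into Proposition~\ref{prop:theta}.
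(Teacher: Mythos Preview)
Your proposal is correct and follows essentially the same approach as the paper: apply Opsut's inequality \eqref{eqn:competition} to each component to get $\theta_e(C_i)\le |V(C_i)|-1$, sum over components using the additivity of $\theta_e$, and then invoke Proposition~\ref{prop:theta}. The paper's proof is virtually identical, only omitting the explicit justification of additivity that you include.
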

\begin{proof}
Let $G_1$, $G_2$, $\ldots$, $G_\omega$ be the components of $G$.
Then, by \eqref{eqn:competition}, $|V(G_i)|-\theta_e(G_i) \ge 2-k(G_i) = 1$ for any $1 \le i \le \omega$.
Since $|V(G)| = \sum_{i=1}^{\omega} |V(G_i)|$ and $\theta_e(G) = \sum_{i=1}^{\omega} \theta_e(G_i)$,  $|V(G)| - \theta_e(G)+1 \ge \omega+1$.
Thus, by Proposition~\ref{prop:theta}, the corollary statement is true.
\end{proof}

It is known that a chordal graph and a forest both have the competition number at most $1$.
Since any graph without isolated vertex has competition number at least $1$, the following corollaries immediately follow from Corollary~\ref{cor:omega}.
\begin{Cor}
For a chordal graph $G$ having $\omega$ components none of which is trivial, $[\omega+1]\subset \Upsilon(G)$.
\end{Cor}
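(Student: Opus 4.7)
The plan is to reduce the statement directly to Corollary~\ref{cor:omega}, whose hypothesis requires that every component of $G$ have competition number exactly $1$. The corollary gives us $[\omega+1]\subset\Upsilon(G)$ whenever that hypothesis holds, so it suffices to verify the hypothesis for a chordal graph with no trivial components.

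First I would observe that every component $G_i$ of a chordal graph is itself chordal (any induced subgraph of a chordal graph is chordal, and a component is induced). Using the stated fact that every chordal graph has competition number at most $1$, we get $k(G_i)\le 1$ for each $i$. Next, since by hypothesis no component is trivial, each $G_i$ contains at least one edge, hence $G_i$ has no isolated vertex (as it is connected with $|V(G_i)|\ge 2$). Applying the stated fact that any graph without an isolated vertex has competition number at least $1$, we conclude $k(G_i)\ge 1$, and therefore $k(G_i)=1$ for every component $G_i$.

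With this in hand, the hypotheses of Corollary~\ref{cor:omega} are satisfied, so $[\omega+1]\subset\Upsilon(G)$, which is exactly the statement. There is no real obstacle here: the proof is a bookkeeping check that the two standard facts recalled in the paragraph preceding the corollary combine to force $k(G_i)=1$ on each non-trivial component, so that Corollary~\ref{cor:omega} applies verbatim.
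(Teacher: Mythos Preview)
Your proposal is correct and follows essentially the same approach as the paper: the paper simply remarks that the corollary follows immediately from Corollary~\ref{cor:omega} using the two stated facts (chordal graphs have competition number at most~$1$, and graphs without isolated vertices have competition number at least~$1$), which is exactly the verification you carry out in slightly more detail.
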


\begin{Cor}\label{cor:forest}
For a forest $G$ having $\omega$ components none of which is trivial, $[\omega+1]\subset \Upsilon(G)$.
\end{Cor}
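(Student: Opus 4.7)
The plan is to derive Corollary~\ref{cor:forest} as a direct consequence of Corollary~\ref{cor:omega}, which requires only that each component of $G$ have competition number equal to $1$. Since Corollary~\ref{cor:omega} is already stated and proved, the entire task reduces to certifying the competition-number hypothesis for each component.

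First, I would invoke the well-known fact, recalled in the paragraph preceding the corollary, that every forest has competition number at most $1$; since every component of a forest is itself a forest (in fact a tree), each component $G_i$ satisfies $k(G_i) \le 1$. Next, I would use the other fact stated in the same paragraph: any graph with no isolated vertex has competition number at least $1$. Because no component of $G$ is trivial, each component $G_i$ is a tree on at least two vertices, hence connected with at least one edge, so $G_i$ contains no isolated vertex and therefore $k(G_i) \ge 1$. Combining the two bounds yields $k(G_i) = 1$ for every $i \in \{1,\ldots,\omega\}$.

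With the hypothesis of Corollary~\ref{cor:omega} verified, I would apply it directly to conclude $[\omega+1] \subset \Upsilon(G)$, which is exactly the claim. The argument is essentially a two-line deduction, so there is no genuine obstacle; the only point worth being careful about is the interpretation of ``trivial,'' namely that a trivial component means a single-vertex (hence isolated-vertex) component, which is precisely what must be excluded in order to apply the lower bound $k(G_i)\ge 1$. Everything else is a mechanical appeal to Corollary~\ref{cor:omega}.
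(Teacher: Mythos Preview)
Your proposal is correct and follows essentially the same approach as the paper: the paper derives the corollary immediately from Corollary~\ref{cor:omega} using the stated facts that a forest has competition number at most~$1$ and that a graph with no isolated vertex has competition number at least~$1$.
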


\section{Competition-realizers for trees}

{In this section, we study $p$-competition trees. Especially, we completely characterize the competition-realizers for caterpillars.}

Let $G$ be a $p$-competition graph. Then $G$ is isomorphic to the $p$-row graph of a matrix $M=(m_{ij})$.
If $|\Lambda_M(v)| \le p-1$, then $v$ is an isolated vertex in $G$, and so $G$ is still the $p$-row graph of the resulting matrix even if the row corresponding to $v$ is replaced by the row with $p-1$ $1$s. Thus we may conclude as follows:
\begin{itemize}
\item[$(\S)$] If a $p$-competition graph $G$ is isomorphic to the $p$-row graph of a matrix $M$, then we may assume that $|\Lambda_M(v)|\geq p-1$ for each vertex $v$ in $G$.
\end{itemize}

\emph{Adding a pendant vertex $v$} to a graph $G$ means obtaining a graph $G'$ such that $v\notin V(G)$, $V(G')=V(G)\cup\{v\}$, and $E(G')=E(G)\cup\{ vu\}$ for a vertex $u$ in $G$.

\begin{Thm}\label{thm:addVrtx}
If $G$ is a graph obtained from a $p$-competition graph by adding a pendant vertex, then $p\in \Upsilon(G)$.
%Suppose that $G$ is a $p$-competition graph. Then $p\in \Upsilon(G')$ if $G'$ is obtained from $G$ by adding a pendant vertex.
\end{Thm}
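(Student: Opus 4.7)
The plan is to take a matrix representation of the given $p$-competition graph $G_0$ and augment it by one row and one column so that the pendant vertex is connected only to its intended neighbor. Write $n=|V(G_0)|$ and let $v$ be the new pendant vertex adjacent to some $u\in V(G_0)$. By Theorem~\ref{thm:rowgraph}, $G_0$ is the $p$-row graph of an $n\times n$ $(0,1)$-matrix $M$, and by the observation $(\S)$ we may assume $|\Lambda_M(w)|\geq p-1$ for every $w\in V(G_0)$. In particular, $|\Lambda_M(u)|\geq p-1$, so we may fix a subset $S\subset \Lambda_M(u)$ with $|S|=p-1$.

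Next I would build an $(n+1)\times(n+1)$ matrix $M'$ as follows: keep the old $n\times n$ block equal to $M$; append one new column (call it column $n+1$) that has a $1$ only in the row corresponding to $u$; and append one new row (corresponding to $v$) whose support is exactly $S\cup\{n+1\}$. Thus
\[
\Lambda_{M'}(u)=\Lambda_M(u)\cup\{n+1\},\qquad \Lambda_{M'}(w)=\Lambda_M(w)\ \text{for } w\in V(G_0)\setminus\{u\},\qquad \Lambda_{M'}(v)=S\cup\{n+1\}.
\]

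Now I would verify that the $p$-row graph of $M'$ is exactly $G$. For two old vertices $w,w'\in V(G_0)$, the new column $n+1$ contributes nothing to $|\Lambda_{M'}(w)\cap\Lambda_{M'}(w')|$ because at most one of them (namely $u$) has a $1$ there; hence adjacencies within $V(G_0)$ are preserved. For the pair $u,v$ we have $|\Lambda_{M'}(u)\cap\Lambda_{M'}(v)|=|S|+1=p$, so $uv$ is an edge. For any other $w\in V(G_0)\setminus\{u\}$, we have $|\Lambda_{M'}(v)\cap\Lambda_{M'}(w)|=|S\cap\Lambda_M(w)|\leq |S|=p-1$, so $v$ is not adjacent to $w$. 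Therefore the $p$-row graph of $M'$ is $G$, and $p\in\Upsilon(G)$ by Theorem~\ref{thm:rowgraph}.

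The only possible obstruction is the assumption that a subset $S$ of size $p-1$ can be chosen inside $\Lambda_M(u)$; this is precisely what $(\S)$ guarantees, and it is the only place where $(\S)$ is invoked. Everything else is a direct bookkeeping check on the supports of the rows.
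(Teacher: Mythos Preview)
Your proof is correct and follows essentially the same approach as the paper: both use $(\S)$ to secure $p-1$ ones in the anchor vertex's row, then augment by one column (with a single $1$ in the anchor's row) and one row (supported on those $p-1$ columns plus the new column). The only cosmetic differences are that the paper swaps the names of the pendant and anchor vertices and normalizes so that the chosen $p-1$ columns are $\{1,\ldots,p-1\}$, whereas you pick an arbitrary subset $S$; your verification of adjacencies is also a bit more explicit than the paper's ``it is easy to check.''
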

\begin{proof}
Let $G$ be a graph obtained from a $p$-competition graph $G'$ with $n$ vertices by adding a pendant vertex $u$ at vertex $v$ in $G'$.
Since $G'$ is a $p$-competition graph, $G'$ is isomorphic to the $p$-row graph of a square matrix $M'=(m'_{ij})$ of order $n$.
%Let $r$ be the number of $1$s in the row $\mathbf{r}_v$ corresponding to $v$.
By ($\S$), we may assume that $|\Lambda_{M'}(v)| \ge p-1$.
%By Theorem~\ref{thm:pECC}, there is a $p$-ECC $F=\{S_0,\cdots,S_{n-1}\}$ for $G$.
Without loss of generality, we may assume that the row corresponding to $v$ is located at the bottom of $M'$ and $\Lambda_{M'}(v)=\{1,2,\ldots,|\Lambda_{M'}(v)|\}$.

%Therefore we may assume that $v$ belongs to at least $p-1$ of $S_0,\cdots,S_{n-1}$ and that $v$ appears in $S_0,\cdots,S_{p-2}$.
Now we define a matrix $M=(m_{ij})$ of order $n+1$ by
%\[
%m'_{ij}=
%\begin{cases}
%  m_{ij} & \mbox{if } i \neq n+1 \mbox{ and } j \neq n+1; \\
%  0 & \mbox{if } i\not\in\{n,n+1\}\mbox{ and }j=n+1,\\
%   & \mbox{or } i=n+1\mbox{ and }p\le j \le n; \\
%  1 & \mbox{if } i=n\mbox{ and }j=n+1, \\
%   & \mbox{or }i=n+1\mbox{ and }j\in\{1,\ldots,p-1,n+1,\}
%\end{cases}
%\]
%that is,
\begin{center}
\begin{tikzpicture}[
    style1/.style={
        matrix of math nodes,
        every node/.append style = {
            text width=#1,
            align=center,
            minimum height=3ex
            },
        nodes in empty cells,
        left delimiter = [,
        right delimiter = ],
        column 1/.style={
            nodes={text width=3.3*#1}
        },
        row 1/.style={
            nodes={text height=9ex}
        },
        column 2/.style={
            nodes={text width=.3*#1}
        }
    }
]
    \matrix[style1 = 3ex] (add1) { & & \\ & & \\ & & \\};
    \draw[dashed] (add1-1-2.north) -- (add1-3-2.south);
    \draw[dashed] (add1-2-1.west) -- (add1-2-3.east);

    \node at (add1-1-1) {$M'$};
    \node[align=center] at (add1-1-3) {$0$ \\ $\vdots$ \\ $0$ \\ $1$};
    \node at (add1-3-1) {$1\cdots1\;0\cdots0$};
    \node at (add1-3-3) {$1$};

    \draw[decorate, decoration={raise=12pt}] (add1-3-3.east) node[right=12pt] {$u$};
    \draw[decorate, decoration={raise=12pt}] ([yshift=-23pt]add1-1-3.east) node[right=12pt] {$v$};
    \draw[decorate, decoration={mirror,raise=12pt}] ([yshift=-17pt]add1-1-1.west) node[left=12pt] {$M=$};
    \draw[decorate, decoration={mirror,brace,raise=5pt}] (add1-3-1.south west) -- node[font=\scriptsize,below=5pt] {$p-1$} (add1-3-1.south);
    \draw[decorate, decoration={mirror,brace,raise=5pt}] (add1-3-1.south) -- node[font=\scriptsize,below=5pt] {$n-p+1$} (add1-3-2.south);
    \draw[decorate, decoration={mirror,brace,raise=5pt}] (add1-3-2.south) -- node[font=\scriptsize, below=5pt] {$1$} (add1-3-3.south east);
\end{tikzpicture}
\end{center}
%Form a $p$-ECC $\{T_0, \ldots, T_n\}$ for $G'$ by setting
%\begin{align*}
%T_i=
%\begin{cases}
%S_i \cup \{u\} & \text{for }i = 0,\ldots, p-2;\\
%S_i & \text{for }i = p-1, \ldots, n-1;\\
%\{v, u\} & \text{for }i=n.
%\end{cases}
%\end{align*}
It is easy to check that $G$ is the $p$-row graph of $M$. By Theorem~\ref{thm:rowgraph}, $G$ is a $p$-competition graph.
\end{proof}

Kim \textit{et al.}~\cite{kim2009cycles} specified the length of a cycle which is a $p$-competition graph in terms of $p$.
\begin{Thm}[\!\!\cite{kim2009cycles}]\label{thm:cycle}
Let $C_n$ be a cycle with $n$ vertices for a positive integer $n\geq 4$.
Then $\Upsilon(C_n)=[n-3]$.
\end{Thm}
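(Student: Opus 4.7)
The plan is to verify the two inclusions $\Upsilon(C_n)\subset[n-3]$ and $[n-3]\subset\Upsilon(C_n)$ separately. The upper bound is almost free: $C_n$ is itself an induced cycle of length $n\geq 4$, hence a hole in itself, hence non-chordal, so Corollary~\ref{cor:hole} yields $\Upsilon(C_n)\subset[n-3]$ immediately.

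For the lower bound, I fix $p\in[n-3]$, put $k:=n-p\in[3,n-1]$, label the vertices of $C_n$ cyclically as $v_0,v_1,\ldots,v_{n-1}$ with edges $v_iv_{i+1}$ (indices mod $n$), and build an $n\times n$ $(0,1)$-matrix $M$ whose row indexed by $v_i$ has $0$'s precisely in the $k-1$ cyclically consecutive columns indexed by $S_i:=\{i,i+1,\ldots,i+k-2\}\pmod{n}$. Because $|\Lambda_M(v_i)\cap\Lambda_M(v_j)|=n-|S_i\cup S_j|$, the claim ``$C_n$ is the $p$-row graph of $M$'' reduces to showing $|S_i\cup S_j|=k$ when $v_iv_j\in E(C_n)$ and $|S_i\cup S_j|\geq k+1$ otherwise; Theorem~\ref{thm:rowgraph} then delivers $p\in\Upsilon(C_n)$.

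The adjacent case is immediate: two consecutive arcs of length $k-1$ on $\ZZ_n$ whose starts differ by $1$ share $k-2$ positions and merge into an arc of length $k$. For a non-adjacent pair with cyclic offset $d\in[2,n-2]$ (where $j=i+d\pmod n$), I would split on how the two arcs $S_i,S_j$ sit on $\ZZ_n$: if they overlap on only one side of the cycle, $|S_i\cup S_j|$ equals either $k-1+d_0$ or $2(k-1)$ with $d_0:=\min(d,n-d)\geq 2$, each of which is at least $k+1$ (the disjoint case using precisely $k\geq 3$); and in the extremal regime $n<2(k-1)$, where the arcs overlap from both sides simultaneously, one gets $S_i\cup S_j=\ZZ_n$, of size $n\geq k+1$ since $k\leq n-1$.

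The main obstacle I anticipate is the clean bookkeeping for the wraparound cases when $k$ is close to $n-1$, in which the two arcs cover most of $\ZZ_n$ and meet from both sides at once. The decisive quantitative point is however minimal: the separation between adjacent and non-adjacent pairs rests solely on the inequality $2(k-1)\geq k+1$, equivalent to $k\geq 3$, matching precisely the threshold $p\leq n-3$ already enforced by non-chordality.
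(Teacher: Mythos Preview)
Your proposal is correct and essentially matches what the paper records from the cited reference. The paper does not give its own proof of this theorem (it is quoted from \cite{kim2009cycles}), but it does transcribe the construction used there: the $p$-edge clique cover $\{S_0,\ldots,S_{n-1}\}$ with $S_i=\{v_i,\ldots,v_{i+p}\}$, equivalently the circulant matrix $M_{p,n}$ whose $i$th row has $1$'s in the $p+1$ cyclically consecutive columns $\{i-p,\ldots,i\}$. Your matrix, defined by placing $0$'s in the arc $\{i,\ldots,i+k-2\}$ with $k=n-p$, has $1$'s in the arc $\{i-p-1,\ldots,i-1\}$; this is $M_{p,n}$ up to a cyclic shift of the columns, so the constructions coincide. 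Your case analysis of $|S_i\cup S_j|$ is just the complement form of Lemma~\ref{prop:sets}, and the reduction to $d=d_0$ via the symmetry $|S_0\cup S_d|=|S_0\cup S_{n-d}|$ makes the formula $k-1+d_0$ valid in the one-sided overlap case (the potentially worrisome sub-case $d\le n/2$, $d\ge k-1$, $d\ge p+2$ is in fact void, since it forces $2p+4\le n\le 2p+2$). For the upper bound, invoking Corollary~\ref{cor:hole} is legitimate and is exactly the chordality obstruction one expects; the original paper \cite{kim2009cycles} predates Corollary~\ref{cor:hole} and presumably argues this part directly, but the content is the same.
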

\noindent In the proof of Theorem~\ref{thm:cycle}, $\mathcal{F}:=\{S_0,\ldots,S_{n-1}\}$, where, for each $i=0, \ldots, n-1$, $S_i:=\{v_i,v_{i+1},\ldots,v_{i+p}\}$, is given as a $p$-edge clique cover of $C_n=v_0v_1\ldots v_{n-1}v_0$ for which all the subscripts are reduced modulo $n$.
The following square matrix of order $n$ is obtained from $\mathcal{F}$ by \eqref{eqn:matrix}:
\begin{equation}\label{eqn:cyclematrix}
M_{p,n}:=\begin{bmatrix}
          1 & 0 & 0 & 0 &  \cdots & 0 & 0 & 1 & 1 & 1 & \cdots & 1  \\
          1 & 1 & 0 & 0 &  \cdots & 0 & 0 & 0 & 1 & 1 & \cdots & 1  \\
          1 & 1 & 1 & 0 &  \cdots & 0 & 0 & 0 & 0 & 1 & \cdots & 1  \\
          \vdots & \vdots & \vdots & \vdots &  \vdots & \vdots & \vdots & \vdots & \vdots & \vdots & \vdots & \vdots  \\
          1 & 1 & 1 & 1 &  \cdots & 1 & 1 & 0 & 0 & 0 & \cdots & 0  \\
          0 & 1 & 1 & 1 &  \cdots & 1 & 1 & 1 & 0 & 0 & \cdots & 0  \\
           \vdots & \vdots & \vdots & \vdots &  \vdots & \vdots & \vdots & \vdots & \vdots & \vdots & \vdots & \vdots \\
           0 & 0 & 0 & 0 &  \cdots & 0 & 1 & 1 & 1 & 1 & \cdots & 1  \\
        \end{bmatrix},
\end{equation}
where $i$th row (resp.\ column) is corresponding to $v_{i-1}$ (resp.\ $S_{i-1}$) and the $(i+1)$st row (we identify the $(n+1)$st row with the first row) is obtained by cyclically shifting the $i$th row by $1$ to the right for each $i$, $1\le i \le n$.
Therefore the $p$-row graph of $M_{p,n}$ is isomorphic to $C_n$ and the following proposition is immediately true.
\begin{Lem}\label{prop:sets}
    Let $n$ be an integer greater than or equal to $4$ and $p$ be a positive integer less than or equal to $n-3$. Then, for the matrix $M_{p,n}$ given in the equation~\eqref{eqn:cyclematrix}, the following are true:
    \begin{itemize}
      \item[(1)] the $k$th row contains exactly $p+1$ $1$s for each integer $k$, $1 \le k \le n$;
      \item[(2)] the $k$th row and the $(k+1)$st row have common $1$s in exactly $p$ columns for each integer $k$, $1 \le k \le n$ (we identify the $(n+1)$st row with the first row);
      \item[(3)] the $k$th row and the $l$th row have common $1$s in at most $p-1$ columns for integers $k$ and $l$ satisfying $1 \le k, l \le n$ and $2 \le |k-l|$.
    \end{itemize}
\end{Lem}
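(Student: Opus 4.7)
The plan is to translate all three claims into elementary counting on cyclic arcs of integers modulo $n$, after first extracting an explicit description of the $1$-positions in each row of $M_{p,n}$.

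The starting point is the following. Let $\Lambda_k\subset\{1,\ldots,n\}$ denote the set of columns where row $k$ of $M_{p,n}$ has a $1$. Reading off the first row of the displayed matrix gives $\Lambda_1=\{1\}\cup\{n-p+1,\ldots,n\}$, which equals the mod-$n$ reduction of the integer interval $\{1-p,\ldots,0,1\}$ into $\{1,\ldots,n\}$. Since each subsequent row is the cyclic right-shift by one of the previous, a routine induction on $k$ yields
\[
\Lambda_k \;=\; \{\,k-p,\, k-p+1,\,\ldots,\, k\,\}\pmod n,
\]
with residues taken in $\{1,\ldots,n\}$. After this, all three claims reduce to cardinality questions about arcs on the cycle $\ZZ/n\ZZ$.

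For (1), the hypothesis $p\le n-3$ gives $p+1\le n-2<n$, so the $p+1$ consecutive integers $k-p,\ldots,k$ represent pairwise distinct residues modulo $n$, and hence $|\Lambda_k|=p+1$. For (2), adopting the convention $\Lambda_{n+1}:=\Lambda_1$, the sets $\{k-p,\ldots,k\}$ and $\{k-p+1,\ldots,k+1\}$ meet in the $p$ consecutive integers $\{k-p+1,\ldots,k\}$, which again reduce to $p$ distinct residues by the same bound, so $|\Lambda_k\cap\Lambda_{k+1}|=p$.

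For (3), the plan is to write $l=k+s$, reduce $s$ modulo $n$ to the representative $s_0$ of smallest absolute value, and note that $\Lambda_l$ is the cyclic shift of $\Lambda_k$ by $s_0$. If $2\le |s_0|\le p$ then the two arcs overlap in a consecutive arc of length exactly $p+1-|s_0|\le p-1$; if $p<|s_0|\le\lfloor n/2\rfloor$ the two arcs are disjoint. Either way $|\Lambda_k\cap\Lambda_l|\le p-1$, as required. The hypothesis ``$|k-l|\ge 2$'' is naturally read as cyclic distance here, since otherwise the cyclically adjacent pair $(k,l)=(1,n)$ would give $p$ common $1$s and contradict the conclusion. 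The whole argument is elementary arc arithmetic on $\ZZ/n\ZZ$, and there is no substantive obstacle; the role of the standing assumption $p\le n-3$ is precisely to keep arcs of length $p+1$ strictly shorter than the full cycle, so that the integer counts transfer without collapse to mod-$n$ counts.
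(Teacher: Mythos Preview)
Your translation to cyclic arcs is the right framework, and parts (1) and (2) are handled correctly. Your observation that part (3) must be read with cyclic distance---since the pair $(k,l)=(1,n)$ shares $p$ common $1$s and is already covered by (2) via the identification of row $n{+}1$ with row $1$---is also well taken; the paper itself offers no argument beyond declaring the lemma ``immediately true,'' so the cyclic reading is evidently the intended one.

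There is, however, a genuine gap in your argument for (3). You assert that for $2\le |s_0|\le p$ the two arcs meet in a \emph{single} consecutive arc of length exactly $p+1-|s_0|$. This is only correct when $2(p+1)\le n$, so that two length-$(p{+}1)$ arcs on $\ZZ/n\ZZ$ cannot overlap on both sides. But the hypothesis allows $p$ up to $n-3$, and when $n-p\le |s_0|\le p$ (which is nonempty once $p\ge n/2$) the shifted arc wraps around and meets $\Lambda_k$ at the far end as well: the intersection then consists of two arcs of total size $(p+1-|s_0|)+(p+1-(n-|s_0|))=2p+2-n$, not $p+1-|s_0|$. For instance, with $n=10$, $p=7$, $|s_0|=5$ one gets an intersection of size $6$, not $3$. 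The desired bound $|\Lambda_k\cap\Lambda_l|\le p-1$ still holds in this regime, but only because $2p+2-n\le p-1$ is equivalent to $p\le n-3$; so this is exactly where the standing hypothesis does real work, contrary to your closing remark that its only role is to keep a single arc shorter than the whole cycle. Adding this second case---or arguing uniformly via the identity $|\Lambda_k\cap\Lambda_l|=\max(0,\,p+1-|s_0|)+\max(0,\,p+1-(n-|s_0|))$---closes the gap.
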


We denote the path graph with $n$ vertices by $P_n$.
\begin{Prop}\label{lem:path}
For an integer $n \ge 3$,
\[\Upsilon(P_n)=\begin{cases}
\{1,2\} &  \mbox{if } n \in \{3,4\}; \\
[n-3] & \mbox{if } n \ge 5.
\end{cases}\]
\end{Prop}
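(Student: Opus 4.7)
The plan is to split the argument by the value of $n$. For $n \in \{3, 4\}$, the containment $\{1, 2\} \subset \Upsilon(P_n)$ is immediate from Corollary~\ref{cor:forest} applied to the forest $P_n$, which has a single nontrivial component. In the reverse direction, Proposition~\ref{prop:complete} excludes $n$ from $\Upsilon(P_n)$ since $P_n \not\cong K_m \cup \III_{n-m}$. For $n = 4$, I would further eliminate $3$ via Proposition~\ref{prop:n-1} by a short case check on the tuples $(n_0, n_1, \ldots, n_k, m)$ summing to $4$: if $n_0 \ge 1$, any vertex in $K_{n_0}$ is universal among non-isolated vertices and the non-isolated part has diameter at most $2$; if $n_0 = 0$, the graph is a disjoint union of cliques and isolated vertices. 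Neither case produces $P_4$.

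For $n \ge 5$, the upper bound $\Upsilon(P_n) \subset [n - 3]$ is free from Corollary~\ref{cor:n-3}, since $\mathrm{diam}(P_n) = n - 1 \ge 4$. The substantive task is the lower containment $[n - 3] \subset \Upsilon(P_n)$, for which I would fix $p \in [n - 3]$ and produce an $n \times n$ matrix whose $p$-row graph is $P_n$.

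My plan for the lower bound is to modify the cycle-realizing matrix $M_{p,n}$ of equation~\eqref{eqn:cyclematrix}, whose $p$-row graph is $C_n = v_0 v_1 \cdots v_{n-1} v_0$ by Lemma~\ref{prop:sets}, so as to destroy only the cycle edge $v_{n-1} v_0$ without disturbing any other adjacency. Concretely, I would replace the entry of $M_{p,n}$ at position $(1, n - p + 1)$ by $0$. Using the explicit shifted-interval description of the rows of $M_{p,n}$, column $n - p + 1$ lies in row $1$ and in each of rows $n - p + 1, n - p + 2, \ldots, n$, but does not lie in row $2$. Hence this single change (i) decreases the overlap of rows $1$ and $n$ from $p$ to $p - 1$, erasing the cycle edge $v_0 v_{n-1}$; (ii) leaves the overlap of rows $1$ and $2$ equal to $p$, preserving the path edge $v_0 v_1$; and (iii) only further decreases overlaps of row $1$ with rows $n - p + 1, \ldots, n - 1$ that were already bounded by $p - 1$ via Lemma~\ref{prop:sets}(3). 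All overlaps not involving row $1$ are unchanged from $M_{p,n}$. Thus the modified matrix has $P_n$ as its $p$-row graph, and Theorem~\ref{thm:rowgraph} yields $p \in \Upsilon(P_n)$. The main technical point is the bookkeeping in item (iii), but Lemma~\ref{prop:sets}(2)--(3) reduces this to a short inspection of which rows of $M_{p,n}$ contain column $n - p + 1$, so the obstacle is mild.
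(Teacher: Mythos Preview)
Your proof is correct and matches the paper's approach almost exactly: the paper also obtains the upper bounds from Corollary~\ref{cor:K_n}, Proposition~\ref{prop:n-1}, and Corollary~\ref{cor:hole} (equivalent here to your use of Corollary~\ref{cor:n-3}), and realizes $P_n$ for $p\le n-3$ by the identical trick of flipping the $(1,\,n-p+1)$-entry of $M_{p,n}$ to $0$. The only cosmetic difference is that the paper exhibits an explicit $4\times 4$ matrix to get $2\in\Upsilon(P_4)$, whereas you (equally validly) invoke Corollary~\ref{cor:forest} for $\{1,2\}\subset\Upsilon(P_4)$.
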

\begin{proof}
For $n \ge 5$, then $\Upsilon(P_n) \subset [n-3]$ by Corollary~\ref{cor:hole}.
By Corollary~\ref{cor:K_n}, $\Upsilon(P_3) \subset \{1,2\}$.
By Corollary~\ref{cor:K_n} and Proposition~\ref{prop:n-1}, $\Upsilon(P_4) \subset \{1,2\}$.

Now we show the other direction containment.
If $n= 3$ and $p\leq 2$, then $\{1,2\}\subset\Upsilon(P_n)$ by Corollary~\ref{cor:forest}.
It is easy to check that $P_4$ is isomorphic to the $2$-row graph of the following matrix:
\[M^*_{2,4}:=\begin{bmatrix}
  1 & 1 & 0 & 0 \\
  1 & 1 & 1 & 0 \\
  0 & 1 & 1 & 1 \\
  0 & 0 & 1 & 1
\end{bmatrix}.\]
Thus $2\in \Upsilon(P_4)$.
Now suppose that $n\ge 4$ and $p\le n-3$.
In $M_{p,n}$ given in \eqref{eqn:cyclematrix}, we replace $1$ in the $(1,n-p+1)$-entry with $0$ to obtain a square matrix $M^*_{p,n}$ of order $n$.
Let $G'$ be the $p$-row graph of $M^*_{p,n}$.
Then the first row and the second row of $M^*_{p,n}$ still share $p$ $1$s.
Yet the first row and the $n$th row of $M^*_{p,n}$ share only $p-1$ $1$s.
Thus, by (2) and (3) of Lemma~\ref{prop:sets}, $G'$ is isomorphic to a path graph with $n$ vertices.
Hence $[n-3]\subset \Upsilon(P_n)$ and this completes the proof.
%Now suppose that $n=4$ and $p= 2$.
%A path of length $n$ obviously has the edge clique cover number $n-1$, so,
%by Theorem~\ref{thm:ECC}, any path graph is a competition graph of a digraph.
%It is easy to check that a path graph with three vertices and a path graph with four vertices are the $2$-row graphs of the following matrices, respectively:
%\[
%\begin{bmatrix}
%  1 & 1 & 0 \\
%  1 & 1 & 1 \\
%  0 & 1 & 1
%\end{bmatrix},
%\]
\end{proof}

% Corollary: tree with diameter m
\begin{Prop}\label{prop:tree_diam}
Let $T$ be a tree with the diameter $m$ for some integer $m$, $m \geq 3$. Then $\Upsilon(T)\supset[m-2]$.
\end{Prop}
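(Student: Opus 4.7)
The plan is to obtain $T$ from a diametral path by iteratively adding pendant vertices, invoking Theorem~\ref{thm:addVrtx} at each step. Since the diameter of $T$ equals $m$, $T$ contains a path $P=v_0v_1\cdots v_m$ of length $m$ as an induced subgraph, and $P\cong P_{m+1}$.

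First I would establish the base case by applying Proposition~\ref{lem:path} to $P_{m+1}$. When $m\geq 4$, Proposition~\ref{lem:path} gives $\Upsilon(P_{m+1})=[(m+1)-3]=[m-2]$; when $m=3$, $P_{m+1}=P_4$ and $[m-2]=\{1\}\subset\{1,2\}=\Upsilon(P_4)$. In either case, every $p\in[m-2]$ lies in $\Upsilon(P_{m+1})$.

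Next I would extend from $P$ to $T$ through a sequence of pendant-vertex additions. Enumerate $V(T)\setminus V(P)=\{u_1,\ldots,u_k\}$ in an order such that each $u_i$ has exactly one neighbor in $V(P)\cup\{u_1,\ldots,u_{i-1}\}$; such an ordering can be produced by a breadth-first search from $V(P)$ in $T$, which is valid because $T$ is a tree, so the forest $T-V(P)$ has each component attached to $P$ through a single vertex. Setting $T_0:=P$ and letting $T_i$ be the subgraph of $T$ induced by $V(T_{i-1})\cup\{u_i\}$ for each $i\in[k]$, the choice of ordering ensures that $T_i$ is obtained from $T_{i-1}$ by adding $u_i$ as a pendant vertex, and $T_k=T$.

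Now fix $p\in[m-2]$. The base case shows $T_0$ is a $p$-competition graph. Arguing by induction on $i$, if $T_{i-1}$ is a $p$-competition graph then Theorem~\ref{thm:addVrtx} implies that $T_i$ is also a $p$-competition graph. Consequently $T=T_k$ is a $p$-competition graph, i.e., $p\in\Upsilon(T)$. Since $p\in[m-2]$ was arbitrary, $[m-2]\subset\Upsilon(T)$. No substantial obstacle is expected: both ingredients (the base case via Proposition~\ref{lem:path} and the inductive step via Theorem~\ref{thm:addVrtx}) are already in place, and the only verification needed is the existence of the pendant-addition ordering, which is a routine consequence of $T$ being a tree.
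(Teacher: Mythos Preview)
Your proposal is correct and follows essentially the same approach as the paper: start from a diametral path, apply Proposition~\ref{lem:path} for the base case, and then grow the tree by sequential pendant additions using Theorem~\ref{thm:addVrtx}. The only cosmetic difference is that the paper, for each fixed $p\in[m-2]$, begins from a subpath of length $p+2$ rather than the full diametral path $P_{m+1}$; your use of the full path with a case split on $m=3$ versus $m\ge 4$ works just as well.
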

\begin{proof}
Take $p\in [m-2]$. Since the diameter of $T$ is $m$, there exists an induced path of length $m$. Since $m \ge p+2$, we may take a section $P$ of this path which has length $p+2$. Then $P$ is a $p$-competition graph by Proposition~\ref{lem:path}. Since $T$ can be obtained from $P$ by adding pendant vertices sequentially, $G$ is a $p$-competition graph by Theorem~\ref{thm:addVrtx}.
\end{proof}

\begin{Cor}
Given a graph $G$ with $n$ vertices and diameter $m$, if $G/\!\!\sim$ is a tree with $n'$ vertices, then $\Upsilon(G)\supset [n-n'+m-2]$.
\end{Cor}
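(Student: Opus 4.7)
The plan is to combine Proposition~\ref{prop:tree_diam}, which bounds the competition-realizer of a tree in terms of its diameter, with Corollary~\ref{cor:contraction_pComp}, which transfers information from $\Upsilon(G/\!\!\sim)$ back to $\Upsilon(G)$ via a parameter shift indexed by the count of non-isolated vertices in the condensation. These two results fit together almost mechanically, so the bulk of the work is just verifying the hypotheses and cleaning up small-diameter degeneracies.

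First I would reduce to the main case $m\ge 3$. If $n'=1$, then $G/\!\!\sim\cong K_1$, forcing $G\cong K_n$; here $\Upsilon(G)=[n]$ by Proposition~\ref{prop:complete or empty} and the bound is immediate. If $n'\ge 2$, then the tree $G/\!\!\sim$ cannot be $K_2$ (its two vertices would share a closed neighborhood and collapse under $\sim$), so $n'\ge 3$ and the tree has diameter at least $2$. The case $m=2$, where $G/\!\!\sim$ is a star $K_{1,n'-1}$ and $G\cong K_{n_0}\vee(K_{n_1}\cup\cdots\cup K_{n_{n'-1}})$, can be handled directly via Proposition~\ref{prop:theta}: the edge clique cover number of $G$ is at most $n'-1$ (the cliques $K_{n_0+n_i}$ cover all edges), so $[n-n'+2]\subset\Upsilon(G)$, which already contains $[n-n']=[n-n'+m-2]$.

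For the main case $m\ge 3$, I would argue as follows. Since $G$ is connected (because $G/\!\!\sim$ is a tree) and non-complete, Proposition~\ref{prop:diam} gives that $G/\!\!\sim$ has diameter $m$ as well. As a tree of diameter $m\ge 3$, $G/\!\!\sim$ is connected with $n'\ge m+1\ge 4$ vertices and therefore has no isolated vertex, so its number of non-isolated vertices equals $n'$. Applying Proposition~\ref{prop:tree_diam} to $G/\!\!\sim$ yields $[m-2]\subset\Upsilon(G/\!\!\sim)$, and then Corollary~\ref{cor:contraction_pComp} gives
\[
\Upsilon(G)\supset\{p+i\mid p\in\Upsilon(G/\!\!\sim),\ i\in\{0,\ldots,n-n'\}\}\supset[n-n'+m-2],
\]
which is the claim.

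I do not anticipate any substantial obstacle; the only care needed is in confirming that $n'$ equals exactly the number of non-isolated vertices of $G/\!\!\sim$ so that Corollary~\ref{cor:contraction_pComp} applies with the correct parameter, and in dispatching the small-diameter degenerate cases cleanly as above.
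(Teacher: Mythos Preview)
Your proof is correct and follows essentially the same route as the paper: invoke Proposition~\ref{prop:diam} to transfer the diameter to $G/\!\!\sim$, apply Proposition~\ref{prop:tree_diam} to get $[m-2]\subset\Upsilon(G/\!\!\sim)$, and then lift via Corollary~\ref{cor:contraction_pComp}. The paper's proof is terser and does not separately dispatch the cases $n'=1$ or $m\le 2$ (for $m\le 2$ the set $[m-2]$ is empty so the chain of inclusions is vacuously fine), whereas you verify the hypotheses of each cited result explicitly and handle the small-diameter degeneracies by hand; your use of Proposition~\ref{prop:theta} for the $m=2$ case is a valid alternative but not needed.
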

\begin{proof}
Suppose that $G/\!\!\sim$ is a tree with $n'$ vertices.
Then, by Proposition~\ref{prop:diam}, $G/\!\!\sim$ has diameter $m$.
Thus, by Proposition~\ref{prop:tree_diam}, $\Upsilon(G/\!\!\sim)\supset[m-2]$.
By Corollary~\ref{cor:contraction_pComp},
\[\Upsilon(G)\supset\{p+i\mid p\in [m-2] \text{ and }i\in[n-n']\cup\{0\}\}=[n-n'+m-2].\]
\end{proof}
A \emph{caterpillar} is a tree with at least $3$ vertices the removal of whose pendant vertices produces a path.
A \emph{spine} of a caterpillar is the longest path of the caterpillar.
In the following, for a caterpillar $T$ with $n$ vertices, we shall find all the positive integers $p$ such that $T$ is a $p$-competition graph in terms of $n$. To do so, we need the following lemmas.

% Lemma: caterpillar
\begin{Lem} \label{lem:caterpillar}
Let $n$ and $t$ be positive integers such that either $(n,t)=(4,2)$ or $n\geq 4$ and $t\leq n-3$.
Then, for any nonnegative integer $k$ and a path graph $P$ of length $n-1$, a caterpillar $T$ obtained  by adding $k$ new vertices to $P$ in such a way that the added vertices are pendent vertices of $T$ adjacent to interior vertices of $P$ is a $(t+k)$-competition graph.
\end{Lem}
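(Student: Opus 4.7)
The plan is to construct an explicit square $(0,1)$-matrix $N$ of order $n+k$ whose $(t+k)$-row graph equals $T$, and then appeal to Theorem~\ref{thm:rowgraph}. The starting ingredient is the matrix $M^*_{t,n}$ built in the proof of Proposition~\ref{lem:path}: its $t$-row graph is the spine $P_n=v_1v_2\cdots v_n$. Only three features of $M^*_{t,n}$ will be needed: (a) each row indexed by an interior vertex $v_i$ (i.e.\ $2\leq i\leq n-1$) contains exactly $t+1$ ones; (b) the rows of any two spine-adjacent vertices share exactly $t$ ones; (c) the rows of any two spine-non-adjacent vertices share at most $t-1$ ones. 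Both the cyclic-shift-based $M^*_{t,n}$ (when $n\geq 4$ and $t\leq n-3$, via Lemma~\ref{prop:sets} together with the single entry modification in the proof of Proposition~\ref{lem:path}) and the ad hoc matrix $M^*_{2,4}$ satisfy (a)--(c), so the two cases of the hypothesis are handled uniformly.

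Label the pendants $u_1,\ldots,u_k$ with $u_j$ attached to the interior vertex $v_{a_j}$ (the $a_j$'s are allowed to coincide). I form the $(n+k)\times(n+k)$ matrix $N$ with four blocks: upper-left $n\times n$ block equal to $M^*_{t,n}$; upper-right $n\times k$ block equal to $J_{n,k}$; lower-left $k\times n$ block whose $j$-th row duplicates the row of $M^*_{t,n}$ corresponding to $v_{a_j}$; and lower-right $k\times k$ block equal to $J_{k,k}-I_k$. Equivalently, $\Lambda_N(v_i)=\Lambda_{M^*_{t,n}}(v_i)\cup\{n+1,\ldots,n+k\}$ for each spine vertex $v_i$, and $\Lambda_N(u_j)=\Lambda_{M^*_{t,n}}(v_{a_j})\cup(\{n+1,\ldots,n+k\}\setminus\{n+j\})$ for each pendant $u_j$.

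The verification splits into three cases. For two spine vertices the intersection equals the $M^*_{t,n}$-intersection plus $k$, so (b) and (c) give exactly $t+k$ on spine edges and at most $t+k-1$ elsewhere, recovering $P_n$. For a pendant $u_j$ and a spine vertex $v_l$ the pendant-column contribution is $k-1$; combining with (a)--(c) yields $t+k$ when $l=a_j$, $t+k-1$ when $v_l$ is spine-adjacent to $v_{a_j}$, and at most $t+k-2$ otherwise, producing exactly the pendant edge $u_jv_{a_j}$. The hard part is the pendant-pendant case: the pendant-column contribution drops to $k-2$, and even when $a_j=a_{j'}$ the original-block intersection is as large as $t+1$, so the total is $(t+1)+(k-2)=t+k-1$, just below the threshold. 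The role of the $-I_k$ correction in the lower-right block is precisely to remove two pendant columns rather than one; without it the construction would spuriously make pendants hanging from the same interior vertex adjacent. Boundary cases $k\in\{0,1\}$ involve no pendant-pendant comparison and so are immediate.
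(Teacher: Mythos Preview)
Your construction and verification are correct and coincide with the paper's proof: the paper builds exactly the same block matrix (upper-left $M^*_{t,n}$, upper-right $J_{n,k}$, lower-left the duplicated spine rows, lower-right $J_{k,k}-I_k$) and checks the same three pair types. Your write-up is in fact a bit more explicit about why the $-I_k$ correction is needed in the pendant--pendant case, but the argument is essentially identical.
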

\begin{proof}
Since either $(n,t)=(4,2)$ or $n \ge 4$ and $t \le n-3$, $P$ is a $t$-competition graph by Proposition~\ref{lem:path}, which is actually the $t$-row graph of $M^*_{t,n}$ where $M^*_{t,n}$ is the matrix defined in the proof of the same lemma.
Thus the statement is true for $k=0$.
Now we assume that $k$ is a positive integer.
Let $P=x_1x_2\cdots x_n$ and  $y_1,\ldots,y_k$ be the vertices added to $P$ as described in the theorem statement.
%Then, since $P$ is a $p$-competition graph, $P$ is the $p$-row graph of $M^*_{p,n}$ where $M^*_{p,n}$ is the matrix defined in the proof of Lemma~\ref{lem:path}.
There exists a map $\phi:[k]\to[n]$ such that $x_{\phi(i)}$ is a vertex on $P$ adjacent to $y_i$ for each $i$, $1 \leq i \leq k$.
By the way that $y_1,\ldots,y_k$ were added, $\phi$ is well-defined.
%For each $i=1, \ldots, k$, there exists a unique vertex $x_{\phi(i)}$ on $P$ that is adjacent to $y_i$.
We define a $k \times n$ $(0,1)$-matrix $A$ so that the $i$th row of $A$ is the same as the row of $M^*_{t,n}$ corresponding to $x_{\phi(i)}$ for each $i$, $1 \leq i \leq k$.

Now we consider the matrix $M$ defined as follows:
\begin{center}
\begin{tikzpicture}[style1/.style={matrix of math nodes, every node/.append style = {text width=#1,align=center,minimum height=5.5ex}, nodes in empty cells, left delimiter = [, right delimiter = ],}]%
    \matrix[style1 = 1.45cm] (cater) { & & \\
      & & \\
      & & \\ };
    \draw[dashed] (cater-2-1.west) -- (cater-2-3.east);%
    \draw[dashed] (cater-1-2.north) -- (cater-3-2.south);%
    \node[] at ([xshift=5pt,yshift=-3pt]cater-1-1) {$M_{t,n}^*$};%
    \node[] at ([xshift=-5pt,yshift=-3pt]cater-1-3) {$J_{n,k}$};%
    \node[] at ([xshift=5pt,yshift=3pt]cater-3-1) {$A$};%
    \node[] at ([xshift=-5pt,yshift=3pt]cater-3-3) {$J_{k,k}-I_k$};%
    \draw[decorate, decoration={brace,raise=5pt}] ([xshift=-5pt]cater-1-1.north west) -- node[font=\footnotesize,above=5pt] {$n$} (cater-1-2.north);%
    \draw[decorate, decoration={brace,raise=5pt}] (cater-1-2.north) -- node[font=\footnotesize,above=5pt] {$k$} (cater-1-3.north east);%

    %\draw[decorate, decoration={brace,raise=12pt}] (cater-1-3.north east) --%
    %node[font=\footnotesize,right=12pt,align=center] {${\bf x}_1$\\ $\vdots$\\ ${\bf x}_n$} (cater-2-3.east);%
    %\draw[decorate, decoration={brace,raise=12pt}] (cater-2-3.east) --%
    %node[font=\footnotesize,right=12pt,align=center] {${\bf y}_1$\\$\vdots$\\${\bf y}_k$} (cater-3-3.south east);%

    \draw[decorate,decoration={raise=12pt}](cater-2-1.west) node[left=12pt] {$M:=$};
    \draw[decorate,decoration={mirror, raise=12pt}]([yshift=-5pt]cater-2-3.east) node[right=12pt] {$.$};
    \end{tikzpicture}
\end{center}
For an example, see Figure~\ref{fig:caterpillar}.
Then, for each $i$ and $j$, $1\leq i < j \leq k$,
\begin{itemize}
  \item[(M-1)] if $y_i$ and $y_j$ are adjacent to the same vertex on $P$, then the $i$th row and the $j$th row of the $(2,1)$-block of $M$ are identical and have exactly $t+1$ common $1$s; otherwise, the $i$th row and the $j$th row of the $(2,1)$-block of $M$ have at most $t$ common $1$s;
  \item[(M-2)] if $y_i$ and $x_l$ are adjacent in $T$ for some $l\in[n]$, then the $l$th row of the $(1,1)$-block of $M$ and the $i$th row of the $(2,1)$-block of $M$ have exactly $t+1$ common $1$s; otherwise, the $l$th row of the $(1,1)$-block of $M$ and the $i$th row of the $(2,1)$-block of $M$ have at most $t$ common $1$s.
\end{itemize}

\begin{figure}
\begin{center}
\begin{tikzpicture}[scale=0.85, every node/.style={transform shape}]
    \tikzset{mynode/.style={inner sep=2.3pt,fill,outer sep=0,circle}}
    \node[mynode] (x3) at (0,0) {};
    \node[mynode] (x4) [above right = .75cm and 1.5cm of x3] {};
    \node[mynode] (x5) [below right = .45cm and 1.5cm of x4] {};
    \node[mynode] (x2) [above left = .75cm and 1.5cm of x3] {};
    \node[mynode] (x1) [below left = .9cm and 1.5cm of x2] {};
    \node[mynode] (y1) [above left = 1.5cm and .6cm of x2] {};
    \node[mynode] (z1) [above right = 1.5cm and .6cm of x2] {};
    \node[mynode] (y2) [below left = 1.2cm and .9cm of x3] {};
    \node[mynode] (z2) [below = 1.5cm of x3] {};
    \node[mynode] (z3) [below right = 1.2cm and .9cm of x3] {};
    \node[mynode] (y3) [above right = 1.5cm and .6cm of x4] {};

    \draw[line width=1pt] (x1) -- (x2) -- (x3) -- (x4) -- (x5);
    \draw[line width=1pt] (x2) -- (y1);
    \draw[line width=1pt] (x2) -- (z1);
    \draw[line width=1pt] (x3) -- (y2);
    \draw[line width=1pt] (x3) -- (z2);
    \draw[line width=1pt] (x3) -- (z3);
    \draw[line width=1pt] (x4) -- (y3);

    \draw[decorate,decoration={raise=3pt}] (x1) node[above=3pt] {$x_1$};
    \draw[decorate,decoration={raise=3pt}] (x2) node[below=3pt] {$x_2$};
    \draw[decorate,decoration={raise=3pt}] (x3) node[above=3pt] {$x_3$};
    \draw[decorate,decoration={raise=3pt}] (x4) node[below=3pt] {$x_4$};
    \draw[decorate,decoration={raise=3pt}] (x5) node[above=3pt] {$x_5$};
    \draw[decorate,decoration={raise=3pt}] (y1) node[above=3pt] {$y_1$};
    \draw[decorate,decoration={raise=3pt}] (z1) node[above=3pt] {$y_2$};
    \draw[decorate,decoration={raise=3pt}] (y2) node[below=3pt] {$y_3$};
    \draw[decorate,decoration={raise=3pt}] (z2) node[below=3pt] {$y_4$};
    \draw[decorate,decoration={raise=3pt}] (z3) node[below=3pt] {$y_5$};
    \draw[decorate,decoration={raise=3pt}] (y3) node[above=3pt] {$y_6$};
\end{tikzpicture}
\qquad
\begin{tikzpicture}[style1/.style={matrix of math nodes, every node/.append style = {text width=#1,align=center,minimum height=1ex}, nodes in empty cells, left delimiter = [, right delimiter = ],}]%
\scriptsize
    \matrix[style1 = 1ex] (cater) {%
    1 & 0 & 0 & 0 & 1 & & 1 & 1 & 1 & 1 & 1 & 1 \\%
    1 & 1 & 0 & 0 & 1 & & 1 & 1 & 1 & 1 & 1 & 1 \\%
    1 & 1 & 1 & 0 & 0 & & 1 & 1 & 1 & 1 & 1 & 1 \\%
    0 & 1 & 1 & 1 & 0 & & 1 & 1 & 1 & 1 & 1 & 1 \\%
    0 & 0 & 1 & 1 & 1 & & 1 & 1 & 1 & 1 & 1 & 1 \\%
     & & & & & & & & & & & \\%
    1 & 1 & 0 & 0 & 1 & & 0 & 1 & 1 & 1 & 1 & 1 \\%
    1 & 1 & 0 & 0 & 1 & & 1 & 0 & 1 & 1 & 1 & 1 \\%
    1 & 1 & 1 &0 & 0 & & 1 & 1 & 0 & 1 & 1 & 1 \\%
    1 & 1 & 1 &0 & 0 & & 1 & 1 & 1 & 0 & 1 & 1 \\%
    1 & 1 & 1 &0 & 0 & & 1 & 1 & 1 & 1 & 0 & 1 \\%
    0 & 1 & 1 & 1 & 0 & & 1 & 1 & 1 & 1 & 1 & 0 \\%
     };%
    \draw[dashed] (cater-6-1.west) -- (cater-6-12.east);%
    \draw[dashed] (cater-1-6.north) -- (cater-12-6.south);%
    \draw[decorate,decoration={raise=12pt}](cater-1-1.west) node[left=12pt] {$x_1$};%
    \draw[decorate,decoration={raise=12pt}](cater-2-1.west) node[left=12pt] {$x_2$};%
    \draw[decorate,decoration={raise=12pt}](cater-3-1.west) node[left=12pt] {$x_3$};%
    \draw[decorate,decoration={raise=12pt}](cater-4-1.west) node[left=12pt] {$x_4$};%
    \draw[decorate,decoration={raise=12pt}](cater-5-1.west) node[left=12pt] {$x_5$};%
    \draw[decorate,decoration={raise=12pt}](cater-7-1.west) node[left=12pt] {$y_1$};%
    \draw[decorate,decoration={raise=12pt}](cater-8-1.west) node[left=12pt] {$y_2$};%
    \draw[decorate,decoration={raise=12pt}](cater-9-1.west) node[left=12pt] {$y_3$};%
    \draw[decorate,decoration={raise=12pt}](cater-10-1.west) node[left=12pt] {$y_4$};%
    \draw[decorate,decoration={raise=12pt}](cater-11-1.west) node[left=12pt] {$y_5$};%
    \draw[decorate,decoration={raise=12pt}](cater-12-1.west) node[left=12pt] {$y_6$};%
\end{tikzpicture}
\caption{A caterpillar $T$ and a matrix whose $8$-row graph is isomorphic to $T$ where the row labeled with $w$ corresponds to the vertex $w$ in $T$.}
\label{fig:caterpillar}
\end{center}
\end{figure}

Let $G$ be the $(t+k)$-row graph of $M$.
We denote the row of $M$ containing the row of $M^*_{t,n}$ corresponding to $x_i$ by $\mathbf{x}_i$ for each $i$, $1\leq i \leq n$, the row of $M$ containing the $i$th row of the $(2,1)$-block of $M$ by $\mathbf{y}_i$ for each $i$, $1\leq i \leq k$.

By the definition of $M^*_{t,n}$, the row of $M^*_{t,n}$ corresponding to $x_i$ and the row of  $M^*_{t,n}$ corresponding to $x_j$ have at most $t-1$ common $1$s if and only if $|j-i|\ge 2$.
Thus $\mathbf{x}_i$ and $\mathbf{x}_j$ have at most $t+k-1$ common $1$s if and only if $|j-i| \ge 2$ and therefore $P$ is an induced subgraph of $G$.

We note that the $i$th row and the $j$th row of the $(2,2)$-block of $M$  have exactly $k-2$ common $1$s for each $i$ and $j$, $1 \le i < j \le k$.
Thus, by (M-1), ${\bf y}_i$ and ${\bf y}_j$ have at most $t+k-1$ common $1$s for each $i$ and $j$, $1 \le i < j \le k$. Thus $y_i$ and $y_j$ are not adjacent in $G$ for each $i$ and $j$, $1 \le i < j \le k$.

We note that the $l$th row of the $(1,2)$-block of $M$ and the $i$th row of the $(2,2)$-block of $M$ have exactly $k-1$ common $1$s for each $i$, $1\leq i \leq k$ and each $l$, $1\leq l \leq n$. Thus, by (M-2), $y_i$ and $x_l$ are adjacent in $T$ if and only if ${\bf y}_i$ and ${\bf x}_l$ have at least $t+k$ common $1$s. Thus we have shown that $T$ is isomorphic to $G$.
Hence $T$ is a $(t+k)$-competition graph by Theorem~\ref{thm:rowgraph}.
\end{proof}

% Lemma: Star graph
\begin{Lem}\label{lem:star}
Given an integer $n \ge 2$ and the star graph $K_{1,n}$, $\Upsilon(K_{1,n})=[n]$.
\end{Lem}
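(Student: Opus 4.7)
The plan is to establish the two containments $\Upsilon(K_{1,n})\subset[n]$ and $[n]\subset\Upsilon(K_{1,n})$ separately. Note that $K_{1,n}$ has $n+1$ vertices, so a priori $\Upsilon(K_{1,n})\subset[n+1]$. To rule out $n+1$, I would invoke Proposition~\ref{prop:complete}: any $(n+1)$-competition graph on $n+1$ vertices must be isomorphic to $K_m\cup\III_{n+1-m}$ for some $m$. However, for $n\geq 2$, the star $K_{1,n}$ is connected with no isolated vertex and is not complete, so it is not of this form. Hence $n+1\notin\Upsilon(K_{1,n})$.

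For the reverse containment, I would exhibit, for each $p\in[n]$, a square $(0,1)$-matrix of order $n+1$ whose $p$-row graph is $K_{1,n}$, and then apply Theorem~\ref{thm:rowgraph}. Label the center of $K_{1,n}$ by $v_0$ and its leaves by $v_1,\ldots,v_n$. Since
\[\binom{n+1}{p}\geq n+1>n \quad \text{for every } p\in[n],\]
I can select $n$ distinct $p$-element subsets $S_1,\ldots,S_n$ of $[n+1]$. Define a square $(0,1)$-matrix $M$ of order $n+1$ whose row corresponding to $v_0$ is the all-one vector and whose row corresponding to $v_i$ (for $1\leq i\leq n$) is the characteristic vector of $S_i$.

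It remains to verify that the $p$-row graph of $M$ is isomorphic to $K_{1,n}$. For each $i\geq 1$, one has $|\Lambda_M(v_0)\cap\Lambda_M(v_i)|=|S_i|=p$, so $v_0v_i$ is an edge in the $p$-row graph. For distinct $i,j\in[n]$, $|\Lambda_M(v_i)\cap\Lambda_M(v_j)|=|S_i\cap S_j|\leq p-1$ since $S_i$ and $S_j$ are distinct $p$-subsets of $[n+1]$, so $v_iv_j$ is not an edge. Thus the $p$-row graph of $M$ is precisely $K_{1,n}$, and by Theorem~\ref{thm:rowgraph}, $p\in\Upsilon(K_{1,n})$, as desired.

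There is no significant obstacle in this argument; the main observation is the pigeonhole-style fact that any two distinct $p$-element subsets of a given set intersect in at most $p-1$ elements, which is exactly what enables the leaves of the star to be pairwise non-adjacent in the constructed $p$-row graph while each still sharing $p$ common ones with the center.
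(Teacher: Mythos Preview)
Your proof is correct, and it is cleaner than the paper's. The paper handles the inclusion $[n]\subset\Upsilon(K_{1,n})$ in pieces: it quotes Corollary~\ref{cor:forest} for $p\in\{1,2\}$, Lemma~\ref{lem:induced star} for $p=n$, and then for $n\ge 4$ and $3\le p\le n-1$ it builds a specific matrix by bordering the cycle matrix $M_{p-2,n}$ with an all-ones row and column and appeals to Lemma~\ref{prop:sets}. Your single uniform construction---all-ones row for the center and $n$ distinct $p$-subsets of $[n+1]$ for the leaves---covers every $p\in[n]$ at once, relying only on the trivial observation that two distinct $p$-sets meet in at most $p-1$ points and that $\binom{n+1}{p}\ge n+1$ for $1\le p\le n$. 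This avoids the case split and the dependence on the cycle-matrix machinery, at no real cost.
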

\begin{proof}
By Corollary~\ref{cor:K_n}, $n+1\not\in \Upsilon(K_{1,n})$, so $\Upsilon(K_{1,n}) \subset [n]$.

%To show $\Upsilon(K_{1,n}) \supset [n]$, suppose that $p\le n$.
Now we show the converse containment.
By Corollary~\ref{cor:forest}, $\{1,2\}\subset \Upsilon(K_{1,n})$.
By Lemma~\ref{lem:induced star}, $n \in \Upsilon(K_{1,n})$.
Therefore $[n]\subset \Upsilon(K_{1,n})$ for $n=2,3$.
Now suppose $n \geq 4$.
Then $\{1,2,n\}\subset \Upsilon(K_{1,n})$ by the above argument.
Thus it is sufficient to show that $p\in \Upsilon(K_{1,n})$ for $3\leq p \leq n-1$.
Now we consider the following matrix $M$ of order $n+1$:
\begin{center}
\begin{tikzpicture}[
    style1/.style={
        matrix of math nodes,
        every node/.append style = {
            text width=#1,
            align=center,
            minimum height=3ex
            },
        nodes in empty cells,
        left delimiter = [,
        right delimiter = ],
    }
]
    \matrix[style1 = 3ex] (add1) { &&&& \\ &&&& \\ &&&& \\&&&& \\&&&& \\};
    \draw[dashed] (add1-1-4.north) -- (add1-5-4.south);
    \draw[dashed] (add1-4-1.west) -- (add1-4-5.east);

    \node at (add1-2-2) {$M_{p-2,n}$};
    \node at (add1-1-5) {$1$};
    \node at (add1-2-5) {$\vdots$};
    \node at (add1-3-5) {$1$};
    \node at (add1-5-1) {$1$};
    \node at (add1-5-2) {$\cdots$};
    \node at (add1-5-3) {$1$};
    \node at (add1-5-5) {$1$};

    %\draw[decorate, decoration={raise=12pt}] (add1-5-5.east) node[right=12pt] {$v^*$};
    \draw[decorate, decoration={raise=12pt}] (add1-3-1.west) node[left=12pt] {$M=$};
\end{tikzpicture}
\end{center}
where $M_{p-2,n}$ is the matrix defined in \eqref{eqn:cyclematrix}.
Let $G$ be the $p$-row graph of $M$. In addition, let $\mathbf{r}_i$ denote the $i$th row of $M$ and $v_i$ be the vertex of $G$ corresponding to $\mathbf{r}_i$ for each $i$, $1 \leq i \leq n+1$.
%We label the center of $K_{1,n}$ as $v^*$ and the remaining vertices as $v_1,\ldots,v_n$.
%We correspond the $i$th row of $M$ to the vertex $v_i$ for $i=1,\ldots,n$ and the last row to the vertex $v^*$.
%We let $S_i= \{v_{i},v_{i+1},\ldots, v_{i+p-2}\}$ for $i=0,\ldots, n-1$ and
%\begin{align*}
%    T_i=
%    \begin{cases}
%    S_i \cup \{v^*\} & \mbox{for } i=0,\ldots, n-1;\\
%    \{v_0, v_{1},v_2,\ldots, v_{n-1}\} \cup \{v^*\} & \mbox{for } i=n-1,
%    \end{cases}
%\end{align*}
%where the subscripts are reduced modulo $n$.

For each $i=1$, $\ldots$, $n$, the $i$th row of $M_{p-2,n}$ %
%row $\mathbf{r}_i$ corresponding to $v_i$
contains exactly $p-1$ $1$s by Lemma~\ref{prop:sets}(1).
%the vertex $v_k$ belongs to exactly $p-1$  of $S_i$
Thus $\mathbf{r}_i$ contains exactly $p$ $1$s in $M$ and so $v_i$ and $v_{n+1}$ are adjacent in $G$ for each $i= 1,\ldots, n$.

By (2) and (3) of Lemma~\ref{prop:sets},
the $i$th row and the $j$th row of $M_{p-2,n}$ have at most $p-2$ common $1$s for distinct $i$ and $j$ in {$[n]$}.
%$\{v_k,v_l\}$ is included in at most $p-2$ sets of $S_i$.
Therefore $\mathbf{r}_i$ and $\mathbf{r}_j$ have at most $p-1$ common $1$s and so $v_i$ and $v_j$ are not adjacent in $G$ for distinct $i$ and $j$ in {$[n]$}.
%Thus $\{v_k,v_l\}$ is included in at most $p-1$ sets of $T_i$ and so $v_k$ and $v_l$ are not adjacent.
Thus $G$ is isomorphic to $K_{1,n}$.
Hence $p \in \Upsilon(K_{1,n})$ and so $[n] \subset \Upsilon(K_{1,n})$.
\end{proof}

  % Theorem: Main result
\begin{Thm}\label{thm:cater_threshhold}
For a caterpillar $T$ with $n$ vertices,
\[
\Upsilon(T)=
\begin{cases}
  [n-1] & \mbox{if } d(T)=2; \\
  [n-2] & \mbox{if } d(T)=3; \\
  [n-3] & \mbox{if } d(T)\geq 4
\end{cases}
\]
where $d(T)$ denotes the diameter of $T$.
%The following statements are true:
%\begin{itemize}
%\item[(i)] For a caterpillar $T$ with diameter $2$, $\Upsilon(T)=[ |V(T)|-1]$.
%\item[(ii)] For a caterpillar $T$ with diameter $3$, $\Upsilon(T)=[|V(T)|-2]$.
%\item[(iii)] For a caterpillar $T$ with the diameter greater than or equal to $4$,  $\Upsilon(T)=[|V(T)|-3]$.
%\end{itemize}
\end{Thm}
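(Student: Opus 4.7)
The plan is to split on $d := d(T)$ and handle each of the three cases separately.

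For $d = 2$, any tree of diameter two on $n \geq 3$ vertices is forced to be a star: pick a diameter-realizing path $u$-$v$-$w$; both $u$ and $w$ must be pendants (else one could extend to a length-$3$ path), and for any other vertex $x$ the unique $(x,u)$-path must pass through $v$ and have length at most $2$, forcing $x \sim v$. Hence $T \cong K_{1,n-1}$ and Lemma~\ref{lem:star} yields $\Upsilon(T) = [n-1]$ at once.

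For $d \geq 3$, let $P = p_1 p_2 \cdots p_s$ be the spine of $T$ with $s = d+1$, and let $k = n - s$ count the pendants of $T$ attached to interior vertices of $P$. I will establish the lower bound via a ``remove-and-restore'' procedure combining Lemma~\ref{lem:caterpillar} with Theorem~\ref{thm:addVrtx}. Set $t_{\max} = s-3$ if $s \geq 5$ and $t_{\max} = 2$ if $s = 4$; in either situation Lemma~\ref{lem:caterpillar} permits $t \in [t_{\max}]$ for paths of $s$ vertices (the case $s=4$, $t=2$ is exactly the exceptional clause $(n,t)=(4,2)$ of the lemma). For each $p \in [k + t_{\max}]$, choose $t = \min(p, t_{\max})$ and $k' = k - (p - t)$; then $t \in [t_{\max}]$ and $k' \in \{0, 1, \ldots, k\}$. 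Let $T'$ be the sub-caterpillar of $T$ obtained by deleting any $k'$ of the $k$ non-spine pendants, so that $T'$ has spine $P$ together with $k - k'$ interior-attached pendants. Applying Lemma~\ref{lem:caterpillar} (with its $n$ set to $s$ and its $k$ set to $k - k'$) shows that $T'$ is a $(t + k - k') = p$-competition graph, and sequentially re-attaching the $k'$ deleted pendants via iterated use of Theorem~\ref{thm:addVrtx} gives $p \in \Upsilon(T)$. Since $k + t_{\max} = n-3$ when $s \geq 5$ and $k + t_{\max} = n-2$ when $s = 4$, this yields $[n-3] \subset \Upsilon(T)$ for $d \geq 4$ and $[n-2] \subset \Upsilon(T)$ for $d = 3$.

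The matching upper bounds come out quickly. When $d \geq 4$, Corollary~\ref{cor:n-3} gives $\Upsilon(T) \subset [n-3]$ immediately. When $d = 3$, Proposition~\ref{prop:complete} excludes $n$ from $\Upsilon(T)$, and to exclude $n-1$ I appeal to Proposition~\ref{prop:n-1}: if $n-1 \in \Upsilon(T)$ then $T \cong (K_{n_0} \vee (K_{n_1} \cup \cdots \cup K_{n_k})) \cup \III_m$. Because $T$ is a connected tree the isolates force $m = 0$; then either $n_0 \geq 1$, in which case vertices of $K_{n_0}$ are universal and the graph has diameter at most $2$, or $n_0 = 0$, in which case the resulting disjoint union of cliques is connected only when it is a single clique, which is not a tree of diameter $3$. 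Both alternatives contradict $d = 3$, so $n-1 \notin \Upsilon(T)$.

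The main obstacle is coordinating the pairs $(t, k')$ so that the entire interval $[k + t_{\max}]$ is genuinely covered: the freedom to take $k' = k$ (reducing $T'$ all the way down to the bare spine $P_s$, which is a $t$-competition graph for $t \leq t_{\max}$ by Proposition~\ref{lem:path}) is what reaches the small values of $p$, while the exceptional clause $(n,t) = (4,2)$ of Lemma~\ref{lem:caterpillar} is precisely what promotes $t_{\max}$ from $1$ to $2$ when $s = 4$, giving the sharp upper endpoint $n-2$ in the $d = 3$ case.
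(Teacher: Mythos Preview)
Your proof is correct and follows essentially the same approach as the paper: the $d=2$ case via Lemma~\ref{lem:star}, the upper bounds via Propositions~\ref{prop:complete}, \ref{prop:n-1} and Corollary~\ref{cor:n-3}, and the lower bounds by deleting some non-spine pendants, invoking Lemma~\ref{lem:caterpillar} on the smaller caterpillar, and re-attaching via Theorem~\ref{thm:addVrtx}. The only cosmetic difference is that the paper splits the lower bound into the two sub-cases ``$p$ small enough that the bare spine already works'' versus ``$p$ large, set $\alpha=p-d(T)+2$ (or $p-2$) pendants,'' whereas you package both at once with the single parametrization $t=\min(p,t_{\max})$, $k'=k-(p-t)$.
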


\begin{proof}
If $d(T)=2$, then {$T \cong K_{1,n-1}$ and, by Lemma~\ref{lem:star},} $\Upsilon(T)=[n-1]$.
%The ``if"-part and the``only if"-part of each of (ii) and (iii) immediately follow from Lemma~\ref{lem:caterpillar}, Theorem~\ref{thm:addVrtx} and Theorem~\ref{thm:path5}, respectively.

{Suppose} $d(T)\geq 3$.
%$\Upsilon(T) \subset [|V(T)|-2]$
%The ``only if" part of (ii) follows from
If $d(T)=3$, $\Upsilon(T) \subset [n-2]$ by Corollary~\ref{cor:K_n} and Proposition~\ref{prop:n-1}.
%The ``only if" part of (iii) is true by
If $d(T)\geq 4$, $\Upsilon(T) \subset [n-3]$ by Corollary~\ref{cor:n-3}.

To show the converse containment, let $k(T)$ denote the number of vertices which are attached to the spine of $T$.
Now take a positive integer $p \in [n-t]$ where $t=2$ if $d(T)=3$ and $t=3$ if $d(T) \ge 4$.

Since $d(T)$ is the length of the spine of $T$, $n=d(T)+1+k(T)$. Thus $p \le (d(T)+1+k(T))-t$ or
\begin{equation}\label{eqn:p}
p-d(T)+t-1 \le k(T).
\end{equation}
%p-k(T) \le d(T)-t+1.
If either $d(T)=3$ and $p\le 2$ or $d(T)\ge 4$ and $p \le d(T)-2$, then the spine of $T$ is a $p$-competition graph by Proposition~\ref{lem:path} and so $T$ is a $p$-competition graph by Theorem~\ref{thm:addVrtx}.

Now assume that one of the following: $d(T)=3$ and $p > 2$; $d(T) \ge 4$ and $p > d(T)-2$. Let
\[\alpha = \begin{cases} p-2 & \mbox{if $d(T)=3$ and $p>2$,}  \\  p-d(T)+2 & \mbox{if $d(T) \ge 4$ and $p > d(T)-2$.} \end{cases}\]
Then $\alpha \geq 1$ and, by \eqref{eqn:p}, we have $\alpha \le k(T)$.
Let $T'$ be a caterpillar obtained from $T$ by deleting some pendent vertices of $T$ so that $d(T')=d(T)$ and $k(T')=\alpha$.
Then, by Lemma~\ref{lem:caterpillar}, $T'$ is a $p$-competition graph and so, by Theorem~\ref{thm:addVrtx}, $T$ is a $p$-competition graph.
\end{proof}

\begin{Cor}
Let $G$ be a graph with $n$ vertices such that
$G/\!\!\sim$ is a caterpillar.
Then
\[
\Upsilon(G)=
\begin{cases}
  [n-1] & \mbox{if } d(G)=2; \\
  [n-2] & \mbox{if } d(G)=3; \\
  [n-3] & \mbox{if } d(G)\geq 4
\end{cases}
\]
where $d(G)$ denotes the diameter of $G$.
\end{Cor}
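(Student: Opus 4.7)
The plan is to reduce this corollary to Theorem~\ref{thm:cater_threshhold} via the condensation machinery, using Proposition~\ref{prop:diam} to match the diameters and Corollary~\ref{cor:contraction_pComp} to inflate the realizer back up to $G$. Set $T := G/\!\!\sim$ and $n' := |V(T)|$, and observe that $T$ is a caterpillar (so connected with $n' \ge 3$ and diameter at least $2$). By~($\star$), isolated vertices of $G$ would produce isolated classes in $T$; since $T$ is connected, $G$ has no isolated vertices. Moreover equivalence classes under $\sim$ cannot span two distinct components of $G$, so the number of components of $G$ equals the number of components of $T$, giving that $G$ itself is connected. Finally $G$ is not complete, for otherwise $T$ would be a single vertex. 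Hence Proposition~\ref{prop:diam} applies and $d(G) = d(T)$.

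The forward (containment) direction is now mechanical. By Theorem~\ref{thm:cater_threshhold},
\[
\Upsilon(T) = [n' - \tau], \quad \text{where } \tau = \begin{cases} 1 & \text{if } d(T)=2,\\ 2 & \text{if } d(T)=3,\\ 3 & \text{if } d(T)\ge 4.\end{cases}
\]
Since $T$ has no isolated vertices, the number of non-isolated vertices of $T$ is $n'$, so Corollary~\ref{cor:contraction_pComp} yields
\[
\Upsilon(G) \supset \{\, p + i \mid p \in [n' - \tau],\ i \in \{0, \ldots, n - n'\}\,\} = [\,n - \tau\,].
\]

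For the reverse (upper-bound) direction, I would handle the three cases of $d(G)$ separately. Since $G$ is connected, non-complete, and has no isolated vertices, Corollary~\ref{cor:K_n} immediately gives $n \notin \Upsilon(G)$ in every case, yielding $\Upsilon(G) \subset [n-1]$ when $d(G)=2$. When $d(G)=3$, Proposition~\ref{prop:n-1} forces any $(n-1)$-competition graph to have the form $(K_{n_0}\vee(K_{n_1}\cup\cdots\cup K_{n_k}))\cup\III_m$, whose diameter is at most $2$; this contradicts $d(G)=3$, so $n-1 \notin \Upsilon(G)$ and therefore $\Upsilon(G) \subset [n-2]$. When $d(G) \ge 4$, Corollary~\ref{cor:n-3} directly gives $\Upsilon(G) \subset [n-3]$. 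Combining with the containments from the preceding paragraph closes the proof.

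There is no genuine obstacle here; the work has already been done in Theorem~\ref{thm:cater_threshhold}, Proposition~\ref{prop:diam}, and Corollary~\ref{cor:contraction_pComp}. The only steps that require care are the bookkeeping claims that $G$ is connected, non-complete, and isolated-vertex-free (so that Proposition~\ref{prop:diam} and the $m = n'$ instance of Corollary~\ref{cor:contraction_pComp} both apply), and the elementary set computation $[1,n'-\tau] + [0,n-n'] = [1,n-\tau]$ used to inflate $\Upsilon(T)$ into a lower bound for $\Upsilon(G)$.
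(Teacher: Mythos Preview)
Your proposal is correct and follows essentially the same route as the paper: apply Theorem~\ref{thm:cater_threshhold} to $T=G/\!\!\sim$, transfer the lower bound to $G$ via Proposition~\ref{prop:diam} and Corollary~\ref{cor:contraction_pComp}, and then knock out the top values using Corollary~\ref{cor:K_n}, Proposition~\ref{prop:n-1}, and Corollary~\ref{cor:n-3}. Your explicit verification that $G$ is connected, non-complete, and has no isolated vertices is a useful addition that the paper leaves implicit.
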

\begin{proof}
By Theorem~\ref{thm:cater_threshhold},
\[
\Upsilon(G/\!\!\sim)=
\begin{cases}
  [m-1] & \mbox{if } d(G/\!\!\sim)=2; \\
  [m-2] & \mbox{if } d(G/\!\!\sim)=3; \\
  [m-3] & \mbox{if } d(G/\!\!\sim)\geq 4,
\end{cases}
\]
where $m=|V(G/\!\!\sim)|$ and $d(G/\!\!\sim)$ denotes the diameter of $G/\!\!\sim$.
Since $G/\!\!\sim$ has no isolated vertices,
\[
\Upsilon(G) \supset
\begin{cases}
  [n-1] & \mbox{if } d(G)=2; \\
  [n-2] & \mbox{if } d(G)=3; \\
  [n-3] & \mbox{if } d(G)\geq 4,
\end{cases}
\]
by Proposition~\ref{prop:diam} and Corollary~\ref{cor:contraction_pComp}.
By Corollary~\ref{cor:K_n}, $n\not\in\Upsilon(G)$.
Therefore $\Upsilon(G)=[n-1]$ if $d(G)=2$.
By Proposition~\ref{prop:n-1}, $n-1\not\in\Upsilon(G)$ if $d(G)\geq 3$.
Thus $\Upsilon(G)=[n-2]$ if $d(G)=3$.
By Corollary~\ref{cor:n-3}, $\Upsilon(G) \subset [n-3]$ if $d(G) \geq 4$.
Hence $\Upsilon(G)=[n-3]$ if $d(G)\geq 4$.
\end{proof}

\begin{Lem}\label{lem:increasing}
Given a $p$-competition graph $G$ with $n$ vertices,
suppose that {$2^r+1$ neighbors of a vertex $v$ of $G$ form} an independent set for some positive integer $r$.
Then $p \ge n-r$ implies that there are two nonadjacent neighbors $x$ and $y$ of $v$ with $|\Lambda_M(x)| < |\Lambda_M(v)|$ and $|\Lambda_M(y)| < |\Lambda_M(v)|$  for any square matrix $M$ of order $n$ whose $p$-row graph is isomorphic to $G$.
\end{Lem}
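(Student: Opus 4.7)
My plan is a proof by contradiction. Throughout, write $U_w:=[n]\setminus\Lambda_M(w)$ for each vertex $w$, set $s:=|U_v|$, and let $N$ denote the given independent set of $2^r+1$ neighbors of $v$.

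I would first introduce the set $S:=\{u\in N_G(v):|\Lambda_M(u)|<|\Lambda_M(v)|\}$ and observe that if the conclusion failed, then any two vertices of $S$ would have to be adjacent, i.e., $S$ would be a clique of $G$. Because $N$ is independent, this forces $|N\cap S|\le 1$, so $N':=N\setminus S$ satisfies $|N'|\ge 2^r$ and $|U_u|\le s$ for every $u\in N'$.

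Next I would bound $s$ from above. Adjacency of each $u\in N$ to $v$ gives $s\le|U_u\cup U_v|\le n-p$. If $s$ equalled $n-p$, then $U_u\subseteq U_v$ for every $u\in N$, and two members of $N$ with identical $U_u$ would be adjacent via $|U_u\cup U_{u'}|=|U_u|\le s\le n-p$; so the $U_u$'s would be pairwise distinct subsets of $U_v$, giving $|N|\le 2^{s}\le 2^{n-p}\le 2^r<2^r+1$, a contradiction. Hence $s\le n-p-1\le r-1$.

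The core step is a fiber argument. For $u\in N'$ I would decompose $U_u=\pi(u)\sqcup Y_u$, where $\pi(u):=U_u\cap U_v$ and $Y_u:=U_u\setminus U_v$. The two constraints $|U_u|\le s$ and $|U_u\cup U_v|\le n-p$ translate into $|Y_u|\le s-|\pi(u)|$ and $|Y_u|\le n-p-s$, respectively. My plan is to show that each fiber $F_T:=\pi^{-1}(T)\subseteq N'$ has at most one element. If $u,u'\in F_T$ were distinct, they would be nonadjacent (they lie in $N$), so $|U_u\cup U_{u'}|\ge n-p+1$, which combined with $\pi(u)=\pi(u')=T$ yields $|Y_u\cup Y_{u'}|\ge n-p+1-|T|$; but also $|Y_u\cup Y_{u'}|\le 2\min(n-p-s,\,s-|T|)$. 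A short case split on which of the two minimands is smaller shows the two inequalities cannot coexist: when $n-p-s\le s-|T|$ they force $|T|\ge 2s-(n-p)+1$, contradicting the case hypothesis $|T|\le 2s-(n-p)$; when $n-p-s>s-|T|$ they force $|T|\le 2s-(n-p)-1$, contradicting the case hypothesis $|T|\ge 2s-(n-p)+1$. Hence $|F_T|\le 1$, and so $|N'|\le 2^s\le 2^{r-1}<2^r$, contradicting the bound from the first paragraph.

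The main obstacle will be the fiber argument: one has to notice both natural upper bounds on $|Y_u|$ (one from $u\sim v$, the other from $u\in N'$) and verify that the nonadjacency lower bound on $|Y_u\cup Y_{u'}|$ is incompatible with whichever of the two minimands is smaller. Once this incompatibility is pinned down, the rest is a clean pigeonhole over $2^{U_v}$.
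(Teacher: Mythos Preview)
Your proof is correct, but it takes a considerably more elaborate route than the paper's.

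The paper argues directly, without contradiction. Since $s=|U_v|\le n-p\le r$, there are at most $2^r$ possible values of $\pi(w)=U_v\cap U_w$; pigeonhole applied to all $2^r+1$ members of $N$ (not a trimmed subset) immediately gives two nonadjacent $x,y\in N$ with $\pi(x)=\pi(y)$. Then a single inclusion--exclusion line,
\[
|U_v|+|U_x|-|U_v\cap U_x|=|U_v\cup U_x|\le n-p<|U_x\cup U_y|=|U_x|+|U_y|-|U_x\cap U_y|,
\]
together with $\pi(x)=\pi(y)\subseteq U_x\cap U_y$, yields $|U_y|>|U_v|$ (and symmetrically $|U_x|>|U_v|$). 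That is the whole proof.

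Your approach recovers the same collision via the fiber map $\pi$, but only after (i) setting up a contradiction so that $N'$ consists of vertices with $|U_u|\le s$, (ii) sharpening $s\le n-p$ to $s\le n-p-1$ by a separate treatment of the case $s=n-p$, and (iii) a two-case computation showing that fibers of $\pi|_{N'}$ are singletons. Step~(iii) is in fact a disguised version of the paper's inclusion--exclusion: once $\pi(u)=\pi(u')$, the same inequality forces $|U_u|,|U_{u'}|>s$, which is exactly why such $u,u'$ cannot both lie in $N'$. Seen this way, your fiber singleton claim and the paper's direct conclusion are the same fact; the paper simply uses it constructively rather than to derive a contradiction, and thereby avoids both the refinement $s\le r-1$ and the case split on $\min(n-p-s,\,s-|T|)$.
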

\begin{proof}
Let $M$ be a square matrix $M$ of order $n$ whose $p$-row graph is isomorphic to $G$.
Since $v$ is not isolated, $p \le |\Lambda_M(v)| \le n$.
For notational convenience, we let $\overline{\Lambda}_M(v)=[n]\setminus\Lambda_M(v)$.
Now suppose $p \ge n-r$.
Then $0 \le |\overline{\Lambda}_M(v)| \le n-p \le r$.
Thus the number of subsets of $\overline{\Lambda}_M(v)$ is less than $2^{r}+1$.
For each neighbor $w$ of $v$, $\overline{\Lambda}_M(v) \cap \overline{\Lambda}_M(w)$ is a subset of $\overline{\Lambda}_M(v)$.
Since $v$ has  $2^{r}+1$ neighbors which form an independent set  by the hypothesis, there are two nonadjacent neighbors $x$ and $y$ of $v$ such that $\overline{\Lambda}_M(v)  \cap \overline{\Lambda}_M(x) =\overline{\Lambda}_M(v) \cap  \overline{\Lambda}_M(y)$ by the Pigeonhole principle.
Since $\overline{\Lambda}_M(v)  \cap \overline{\Lambda}_M(x)$ and $\overline{\Lambda}_M(v) \cap  \overline{\Lambda}_M(y)$ are subsets of $\overline{\Lambda}_M(x)$ and $\overline{\Lambda}_M(y)$, respectively, we have
\begin{equation}\label{eqn:zero}
\overline{\Lambda}_M(v) \cap \overline{\Lambda}_M(x)={ \overline{\Lambda}_M(v) \cap \overline{\Lambda}_M(y)} \subset \overline{\Lambda}_M(x) \cap \overline{\Lambda}_M(y).
\end{equation}
Since $v$ is adjacent to $x$ and $y$,  $|\overline{\Lambda}_M(v) \cup \overline{\Lambda}_M(x)| \le n-p$ and $|\overline{\Lambda}_M(v) \cup \overline{\Lambda}_M(y)| \le n-p$.
Since $x$ and $y$ are not adjacent, $|\overline{\Lambda}_M(x) \cup \overline{\Lambda}_M(y)| > n-p$.
Thus
\begin{align*}|\overline{\Lambda}_M(v)|+|\overline{\Lambda}_M(x)|-|\overline{\Lambda}_M(v) \cap \overline{\Lambda}_M(x)|  =|\overline{\Lambda}_M(v) \cup \overline{\Lambda}_M(x)| \leq n-p\\ <|\overline{\Lambda}_M(x) \cup \overline{\Lambda}_M(y)|
=|\overline{\Lambda}_M(x)|+|\overline{\Lambda}_M(y)|-|\overline{\Lambda}_M(x) \cap \overline{\Lambda}_M(y)|
\end{align*}
Then, by \eqref{eqn:zero}, $|\overline{\Lambda}_M(y)| > |\overline{\Lambda}_M(v)|$. By the same argument, one can show that  $|\overline{\Lambda}_M(x)| > |\overline{\Lambda}_M(v)|$ and we complete the proof.
\end{proof}

By Proposition~\ref{prop:tree_diam}, $\Upsilon(T)\neq \emptyset$ for a tree $T$ with the diameter at least $3$.
It is easy to see that $\Upsilon(T)\neq \emptyset$ for a tree $T$ with the diameter at most two.
Thus $\max\Upsilon(T)$ exists for any tree $T$.
We have shown that $|V(T)| - \max\Upsilon(T) \leq 3$ for a caterpillar $T$.
One might think by this result that there exists a positive integer $t$ such that $|V(T)| - \max\Upsilon(T) \leq t$ for any tree $T$, yet it is not true by the following theorem.

Let $k$ be a positive integer.
A \emph{$k$-ary tree} is a rooted tree in which each vertex has no more than $k$ children.
A \emph{full $k$-ary tree} is a rooted tree exactly $k$ children or no children.
A \emph{perfect $k$-ary tree} is a full $k$-ary tree in which all pendant vertices are at the same depth.
The \emph{depth} of a vertex in a rooted tree is the distance between the vertex and the root.
The \emph{height} of a rooted tree is the number of edges on the longest path between its root and a pendant vertex.

\begin{Thm}\label{thm:n-r}
For any positive integer $r$, there is a tree $T$ with $|V(T)| - \max\Upsilon(T) > r$.
\end{Thm}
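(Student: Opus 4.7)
The plan is to take $T$ to be the perfect $k$-ary tree of height $r+1$, where $k = 2^r+1$, and to show that no $p \ge n-r$ lies in $\Upsilon(T)$, where $n = |V(T)|$. The branching $k = 2^r + 1$ is chosen precisely so that Lemma~\ref{lem:increasing} becomes applicable at every internal vertex of $T$: each internal vertex has $2^r+1$ children, and those children automatically form an independent set in the tree.

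Assume for contradiction that $T$ is a $p$-competition graph for some $p \ge n-r$; by Theorem~\ref{thm:rowgraph}, $T$ is isomorphic to the $p$-row graph of some square $(0,1)$-matrix $M$ of order $n$. I will then construct a downward walk $v_0, v_1, v_2, \ldots$ in $T$ starting from the root $v_0$. Given $v_i$ that is not a leaf, Lemma~\ref{lem:increasing} produces two nonadjacent neighbors $x, y$ of $v_i$ with $|\Lambda_M(x)|, |\Lambda_M(y)| < |\Lambda_M(v_i)|$; the parent $v_{i-1}$ (for $i \ge 1$) is automatically ruled out since $|\Lambda_M(v_{i-1})| > |\Lambda_M(v_i)|$ by the previous step, so both $x, y$ must be children of $v_i$, and I set $v_{i+1} := x$. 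The walk continues as long as $v_i$ is not a leaf, so in the perfect $k$-ary tree of height $r+1$ it must reach a leaf at depth $r+1$, producing $r+2$ vertices $v_0, \ldots, v_{r+1}$ with strictly decreasing $|\Lambda_M|$-values. This forces $|\Lambda_M(v_{r+1})| \le |\Lambda_M(v_0)| - (r+1) \le n - r - 1 < p$, which contradicts $|\Lambda_M(v_{r+1})| \ge p$ (this lower bound holds because $v_{r+1}$ is non-isolated, being adjacent to its parent).

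Consequently no $p \ge n-r$ lies in $\Upsilon(T)$, so $\max \Upsilon(T) \le n-r-1$ and $|V(T)| - \max \Upsilon(T) \ge r+1 > r$. The main subtlety to watch is verifying that at each step of the construction the lemma forces us strictly deeper into the tree rather than backward toward the parent; this is handled automatically by the strict monotonicity of $|\Lambda_M|$ along the walk, which excludes the parent as a candidate. The rest reduces to the elementary bound $|\Lambda_M(v)| \ge p$ at every non-isolated vertex and to the combinatorics of depths in a perfect $k$-ary tree.
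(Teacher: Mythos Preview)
Your proof is correct and follows essentially the same approach as the paper: both take $T$ to be the perfect $(2^r+1)$-ary tree of height $r+1$, assume $p\ge n-r$, and iterate Lemma~\ref{lem:increasing} down from the root to force a strictly decreasing chain of $|\Lambda_M|$-values of length $r+2$, yielding a contradiction at a leaf. Your treatment is in fact slightly more explicit than the paper's in justifying why each application of the lemma moves strictly toward a child (via the monotonicity $|\Lambda_M(v_{i-1})|>|\Lambda_M(v_i)|$ already established), whereas the paper simply asserts that the resulting vertex is a child.
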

\begin{proof}
Let $T$ be a perfect $(2^r+1)$-ary tree with height $r+1$ and a root $x_0$.
%Suppose that $|V(T)| - \max(\Upsilon(T)) \le r$.
Then by the definition of $\Upsilon(G)$ for a graph $G$, $T$ is a $(\max\Upsilon(T))$-competition graph. % and  $\max(\Upsilon(T)) \geq |V(T)|-r$.
Then $T$ is a $(\max\Upsilon(T))$-row graph of a matrix $M$.
Suppose that $|V(T)|-\max\Upsilon(T) \leq r$.
Then, by Lemma~\ref{lem:increasing},
$|\Lambda_M(x_1)| < |\Lambda_M(x_0)| \leq |V(T)|$ for some children $x_1$ of $x_0$.
Then, by the same lemma again,
$|\Lambda_M(x_2)| < |\Lambda_M(x_1)| \leq |V(T)|-1$ for some children $x_2$ of $x_1$.
We apply the lemma repeatedly to have $|\Lambda_M(x_{r+1})| < |V(T)|-r$.
Since $x_{r+1}$ is non-isolated, $|\Lambda_M(x_{r+1})| \geq \max\Upsilon(T)$.
%We have reached a contradiction since $x_{r+1}$ is non-isolated.
Thus $|V(T)| - \max\Upsilon(T) > r$ and we reach a contradiction.
Hence $|V(T)| - \max\Upsilon(T) > r$.
\end{proof}

%\begin{Thm}
%For a positive integer $r \ge 2$ and $d \ge 3$, let $T$ be a $(r, d)$-regular tree.
%Then $T$ is $p$-competition for all $p \le \sum_{i=0}^{d-1}r^i+1$.
%\end{Thm}
%\begin{proof}
%We show by construction.
%\end{proof}

\section{Closing Remarks}
We have shown that $\Upsilon(K_{3,3})=\emptyset$. We would like to know whether or not $\Upsilon(K_{n,n})=\emptyset$ for any $n \ge 4$.
We have characterized the graphs with $n$ vertices and the competition-realizer $[n]$ and $[n-1]$, respectively.
It would be interesting to characterize the graphs with $n$ vertices and competition-realizer $[n-2]$.
Finally we suggest to find the competition-realizer for a Lobster to extend our result which gives every element in the competition-realizer for a caterpillar.

\section{Acknowledgement}
This research was supported by
the National Research Foundation of Korea(NRF) funded by the Korea government(MSIT) (No.\ NRF-2017R1E1A1A03070489) and by the Korea government(MSIP) (No.\ 2016R1A5A1008055).
%%%%%%%%%%%%%%%%%%%%%%%%%%%%%%%%%%%%%%%%%%%%%%%%%%%%%%%%%%%%%%%%%%%%%%%%%%%
\bibliographystyle{plain}

\end{document}